\newcommand{\mS}{\mathcal{S}}
\newcommand{\mP}{\mathcal{P}}
\newcommand{\mL}{\mathcal{L}}
\newcommand{\mI}{\mathcal{I}}
\newcommand{\mN}{\mathcal{N}}
\newcommand{\mH}{\mathcal{H}}
\newcommand{\G}{\Gamma}
\newcommand{\mG}{\mathcal{G}}
\newcommand{\mO}{\mathcal{O}}
\newcommand{\N}{\mathbb{N}}
\newcommand{\dist}{\mathrm{d}}
\newcommand{\HJ}{\mathsf{HJ}}
\mathchardef\mh="2D
\newtheorem{thm}{Theorem}[section]
\newtheorem{prop}[thm]{Proposition}
\newtheorem{lem}[thm]{Lemma}
\newtheorem{cor}[thm]{Corollary}
\begin{document}

\title{A new near octagon and the Suzuki tower}
\author{Anurag Bishnoi and Bart De Bruyn}
\maketitle

\begin{abstract}
We construct and study a new near octagon of order $(2,10)$ which has its full automorphism group isomorphic to the group $\mathrm{G}_2(4){:}2$ and which contains $416$ copies of the Hall-Janko near octagon as full subgeometries. Using this near octagon and its substructures we give geometric constructions of the $\mathrm{G}_2(4)$-graph and the Suzuki graph, both of which are strongly regular graphs contained in the Suzuki tower.  As a subgeometry of this octagon we have discovered another new near octagon, whose order is $(2,4)$.

  \bigskip\noindent \textbf{Keywords:} near polygon, generalized polygon, finite simple group, Suzuki tower, strongly regular graph, commuting involutions
\end{abstract}

\section{Introduction and overview} \label{sec1}

A \textit{near $2d$-gon} with $d \in \N$ is a partial linear space $\mS$ that satisfies the following properties: 
\begin{enumerate} [(NP1)]
\item The collinearity graph of $\mathcal{S}$ is connected and has diameter $d$. 
\item For every point $x$ and every line $L$ there exists a unique point $\pi_L(x)$ incident with $L$ that is nearest to $x$. 
\end{enumerate} 
A {\em near polygon} is a near $2d$-gon for some $d \in \N$. Near polygons were introduced by Shult and Yanushka in \cite{Sh-Ya} as geometries related to certain line systems in Euclidean spaces. The class of near $4$-gons coincides with the class of possibly degenerate generalized quadrangles. Generalized quadrangles belong to the family of generalized polygons, an important class of point-line geometries introduced by Jacques Tits in \cite{Ti}. 
It is well known that every generalized $2d$-gon is a near $2d$-gon. 

A near polygon is said to have {\em order} $(s,t)$ if it has  $s+1$ points on each line and $t+1$ lines through each point. 
An important class of near polygons with an order is that of the regular near polygons, which are related to the distance regular graphs (see Section 6.4 of \cite{BCN}). 
A famous example of a regular near octagon ($d = 4$) is the near octagon $\HJ$ \cite{Co} associated with the sporadic simple group $\mathrm{J}_2$ of Hall and Janko. 
It can be constructed by taking the $315$ central involutions of the group $\mathrm{J}_2$ as points  and  the three element subsets $\{x,y,xy\}$ of points where the involutions $x$ and $y$ commute as lines. Therefore, the Hall-Janko near octagon is a \textit{commuting involution graph} of the sporadic simple Hall-Janko group $\mathrm{J}_2$.

Michio Suzuki (cf.~\cite{Su}) constructed a sequence of five finite simple groups $G_0, \dots, G_4$ and five vertex transitive graphs $\G_0, \dots, \G_4$ such that for $i \in \{1, \dots, 4\}$ the graph $\G_{i-1}$ is the local graph of $\G_i$ and the group $G_i$ is an automorphism group of the graph $\G_i$, with vertex stabilizer isomorphic to $G_{i-1}$. 
The largest of these groups was a new sporadic simple group, which is now called $\mathrm{Suz}$.  
The five simple groups are $\mathrm{L}_3(2) < \mathrm{U}_3(3) < \mathrm{J}_2 < \mathrm{G}_2(4) < \mathrm{Suz}$ and the full automorphism group of the graph $\G_i$ is the split extension of $G_i$ by the cyclic group of order $2$, which is denoted as $G_i{:}2$ in the ATLAS notation \cite{Atlas}. 
The graph $\G_0$ is the incidence graph of the unique $2 \mh (7,4,2)$ design (the complementary design of the Fano plane), that is, the quartic vertex transitive co-Heawood graph, and the rest of them are strongly regular graphs having the following parameters $(v,k,\lambda, \mu)$:
\begin{itemize}
\item $\G_1$: $(36, 14, 4, 6)$, the $\mathrm{U}_3(3)$-graph; 

\item $\G_2$: $(100, 36, 14, 12)$, the Hall-Janko graph;

\item $\G_3$: $(416, 100, 36, 20)$, the $\mathrm{G}_2(4)$-graph;

\item $\G_4$: $(1782, 416, 100, 96)$, the Suzuki graph.
\end{itemize}
These sequences of groups and graphs form what Jacques Tits called the {\em Suzuki tower}. 

In this paper, we associate near polygons $\mS_0, \dots, \mS_3$ to the Suzuki tower, where for every $i \in \{1, 2, 3\}$ the near polygon $\mS_{i-1}$ is a full subgeometry of $\mS_{i}$, and for every $i \in \{0, 1, 2, 3\}$ $\mS_i$ has the group $G_i{:}2$ as its full automorphism group. 
Moreover, these near polygons can be used to construct all the graphs in the Suzuki tower.
The near polygons $\mS_0,$ $\mS_1$ and $\mS_2$ are already known and they are respectively isomorphic to the dual of the double of the Fano plane, the dual split Cayley hexagon $\mathrm{H}(2)^D$ and the Hall-Janko near octagon $\HJ$. We construct a new near octagon $\mathcal{G} = \mS_3$ of order $(2,10)$ as a particular kind of \textit{commuting involution graph} of the group $\mathrm{G}_2(4){:}2$.

\begin{thm}\label{main:1}
The point-line geometry $\mathcal{G} = (\mP, \mL)$ formed by taking $\mP$ as the conjugacy class of $4095$ central involutions of the group $G = \mathrm{G}_2(4){:}2$ and lines as the three element subsets $\{x, y, xy\}$ of $\mP$ formed by taking those commuting involutions $x$, $y$ in $\mP$ that satisfy $[G:N_G(\langle x,y \rangle)]\in \{1365, 13650\}$, is a near octagon of order $(2,10)$ with $\mathrm{G}_2(4){:}2$ as its full automorphism group.
The set $S^*$ of lines $\{x, y, xy\}$ with $[G:N_G( \langle x,y \rangle)] = 1365$ is a line spread of this geometry. 
Moreover, this near octagon contains precisely $416$ suboctagons isomorphic to the Hall-Janko octagon as full subgeometries. 
\end{thm}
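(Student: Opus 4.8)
The plan is to treat this as a blend of group-theoretic bookkeeping and explicit computation inside $G = \mathrm{G}_2(4){:}2$, using the faithful transitive action of $G$ on the set $\mP$ of $4095$ central involutions (equivalently, on the cosets of a centralizer $C_G(x)$ of order $|G|/4095 = 122880$). First I would classify the $G$-orbits on ordered pairs $(x,y)$ of distinct commuting involutions in $\mP$. For such a pair the subgroup $\langle x,y\rangle$ is a Klein four-group whose three involutions are $x$, $y$, $xy$, so a line is nothing but a four-subgroup all of whose involutions lie in $\mP$, subject to the normaliser condition $[G:N_G(\langle x,y\rangle)]\in\{1365,13650\}$. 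Computing $N_G(\langle x,y\rangle)$ for an orbit representative of each relevant type isolates exactly two $G$-orbits of such four-groups, of sizes $1365$ and $13650$. Every line has three points, so $s=2$ at once; since $G$ is transitive on $\mP$, each point lies on the same number of lines of each type, and counting incidences gives $3\cdot 1365/4095 = 1$ line of the first type and $3\cdot 13650/4095 = 10$ lines of the second type through each point, hence $t+1 = 11$. That the space is partial linear is automatic, since two distinct collinear points $x,z\in\mP$ lie on the unique line determined by $\langle x,z\rangle$.

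The heart of the argument, and the step I expect to be the main obstacle, is verifying the two near-polygon axioms and pinning down the diameter. For this I would compute the suborbits of $G$ on $\mP$ (the orbitals of the permutation representation), identify which suborbit is the collinearity relation, and from the resulting collinearity graph $\G$ read off its distance distribution. One must check that $\G$ is connected of diameter exactly $4$, so that $\mathcal{G}$ is an octagon candidate, and then verify property (NP2): for every point $x$ and every line $L$ there is a unique point of $L$ nearest to $x$. In practice (NP2) is checked orbit by orbit, confirming that for a representative at each distance and each of the two line types exactly one of the three points of $L$ attains the minimum distance to $x$. I would also record the parameters witnessing the order $(2,10)$. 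This is the computational core: it cannot be reduced to a short closed-form argument and relies on an accurate model of the permutation representation.

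For the line spread, the $1365$ lines $S^*$ of index $1365$ form a single $G$-orbit, each point lies on exactly one of them by the count above, and $3\cdot 1365 = 4095 = |\mP|$; hence $S^*$ partitions $\mP$ and is a line spread. For the full automorphism group, observe that every automorphism of $\mathcal{G}$ preserves collinearity and thus induces an automorphism of $\G$, giving an embedding $\mathrm{Aut}(\mathcal{G})\hookrightarrow\mathrm{Aut}(\G)$; since $G\le\mathrm{Aut}(\mathcal{G})$ by construction, it suffices to show $\mathrm{Aut}(\G)=G$, which I would obtain from the known automorphism group of this graph or by direct computation with the permutation representation. Finally, for the suboctagon count I would use the embedding $\mathrm{J}_2 < \mathrm{G}_2(4)$ with $[\mathrm{G}_2(4):\mathrm{J}_2]=416$ and $\mathrm{J}_2$ self-normalising: each of the $416$ conjugates of $\mathrm{J}_2{:}2$ in $G$ meets $\mP$ in a set of $315$ involutions on which the induced lines reproduce the Hall-Janko near octagon $\HJ$ as a full subgeometry. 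I would then show these $416$ subgeometries are pairwise distinct and exhaust the $\HJ$-suboctagons, by checking that a full $\HJ$-subgeometry generates, via the involutions it contains, a subgroup conjugate to $\mathrm{J}_2{:}2$, so that the correspondence between conjugates and suboctagons is a bijection.
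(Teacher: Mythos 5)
Your treatment of the first two assertions essentially matches the paper: the near-polygon axioms, the order $(2,10)$ and the line spread $S^*$ are all read off from the suborbit structure of $G$ acting on $\mP$, which is exactly the paper's suborbit diagram (itself verified by computer), and your incidence counts $3\cdot 1365/4095=1$ and $3\cdot 13650/4095=10$ are correct. For the automorphism group you diverge: the paper does not compute $\mathrm{Aut}$ of the collinearity graph directly (note that this graph is new, so there is no ``known automorphism group'' to quote); instead it observes that every automorphism of $\mathcal{G}$ permutes the quads, hence permutes the lines of $S^*$ (each such line being an intersection of two quads), hence induces an automorphism of the geometry $(S^*,\mathcal{Q}^*)\cong \mathrm{H}(4)^D$, and that an automorphism fixing every line of $S^*$ is trivial; this gives $|\mathrm{Aut}(\mathcal{G})|\le|\mathrm{Aut}(\mathrm{H}(4)^D)|=|\mathrm{G}_2(4){:}2|$. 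Your brute-force computation would also settle this, but it is a genuinely different (and less structural) route.

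The real gap is in the final claim that the $416$ subgeometries $\mathcal{S}_H$ exhaust all Hall--Janko suboctagons. You propose to show that the $315$ involutions of an arbitrary full $\HJ$-subgeometry generate a subgroup conjugate to $\mathrm{J}_2$, but you give no mechanism for proving this, and a priori there is none: an abstract set of $315$ central involutions carrying the incidence structure of $\HJ$ could perfectly well generate a much larger subgroup (e.g.\ all of $\mathrm{G}_2(4)$), and the only obvious way to see that it does not is to already know that the suboctagon is one of the $\mathcal{S}_H$ --- which is circular. The paper closes this gap by a chain of geometric lemmas: any $\HJ$-suboctagon $\mathcal{H}$ contains no line of $S^*$, no quad meets it in two intersecting lines, it is isometrically embedded, its points at mutual distance $2$ (resp.\ $3$) lie in the suborbits $\mathcal{O}_{2b}$ (resp.\ $\mathcal{O}_{3b}$) of one another, and consequently $\mathcal{H}$ is uniquely determined by any pair of its opposite points (Lemma \ref{lem4.25}); the double count $(4095\cdot 2048)/(315\cdot 64)=416$ then gives the upper bound matching the $416$ constructed examples. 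You would need to supply an argument of this kind (or some other independent upper bound) before your bijection claim can stand; as written, the exhaustion step does not go through. A smaller omission in the same part: to see that $\mathcal{S}_H$ is a \emph{full} subgeometry of $\mathcal{G}$ you must check that every pair of distinct commuting central involutions of $H\cong\mathrm{J}_2{:}2$ satisfies $[G:N_G(\langle x,y\rangle)]=13650$, i.e.\ that the $\HJ$-lines really are lines of $\mathcal{G}$; this uses the fact that $\mathrm{J}_2{:}2$ has a single orbit on such pairs.
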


\bigskip \noindent We note that some other geometries (usually of rank higher than $2$) related to the Suzuki tower have been studied in the literature as well \cite{Ne82, S92, Leemans05}, but to our knowledge none of them include our near octagon $\mG$. 
On the elements of the line spread $S^*$ of $\mG$, a generalized hexagon can be defined as part (1) of the following result shows.

\begin{thm}\label{main{:}2}
\begin{enumerate}
\item[$(1)$] Let $\mathcal{Q}^\ast$ denote the set of all subgeometries of $\mathcal{G}$ isomorphic to the generalized quadrangle $W(2)$ (the so-called quads of $\mathcal{G}$). Then the point-line geometry with point set $S^\ast$, line set $\mathcal{Q}^\ast$ and natural incidence relation (set containment) is isomorphic to the dual split Cayley hexagon $\mathrm{H}(4)^D$.

\item[$(2)$] Let $S \subseteq S^\ast$ and $\mathcal{Q} \subseteq \mathcal{Q}^\ast$ such that $(S,\mathcal{Q})$ is a full subgeometry of $(S^\ast,\mathcal{Q}^\ast) \cong \mathrm{H}(4)^D$ that is a generalized hexagon of order $(4,1)$. 
Let $\mP'$ denote the set of points incident with a line of $S$ and $\mL'$ the set of lines incident with a quad of $\mathcal{Q}$. Then $\mG' = (\mP',\mL')$ is a full subgeometry of $\mathcal{G} = (\mP,\mL)$ that is a near octagon of order $(2,4)$.
\end{enumerate}
\end{thm}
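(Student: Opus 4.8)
\emph{Plan for part $(1)$.} The first goal is to show that $(S^\ast,\mathcal{Q}^\ast)$ has the parameters of a generalized hexagon of order $(4,4)$. I would begin with the combinatorics of the spread inside a quad: every quad $Q\in\mathcal{Q}^\ast$ is a $W(2)$, and I would show that the lines of $S^\ast$ lying in $Q$ form a spread of $Q$, i.e.\ five mutually disjoint lines covering all $15$ points of $Q$. The key point is that a spread line meeting a quad must be contained in it: since $Q$ is a convex subspace and each point of $\mG$ lies on a unique member of $S^\ast$, the spread line through any point of $Q$ is forced into $Q$. This gives five points on each line of $(S^\ast,\mathcal{Q}^\ast)$. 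Using $|S^\ast|=1365$ (which equals $|\mP|/3$ because $S^\ast$ is a spread) and the local fact that each spread line lies in exactly five quads, incidence counting then yields $|\mathcal{Q}^\ast|=1365$ and order $(4,4)$.

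Next I would verify the generalized hexagon axioms. This reduces to three facts about $\mG$: two distinct quads meet in at most a line or a point (so two quads share at most one spread line, excluding digons), there are no closed paths of length $4$ or $5$ in the incidence graph (girth $\ge 12$), and any two elements are joined by a path of length at most $6$ (diameter $\le 6$); all three follow from the quad-intersection pattern together with the diameter-$4$ structure of $\mG$. The delicate step is the identification with $\mathrm{H}(4)^D$ rather than with some a priori unclassified hexagon of order $(4,4)$, and for this I would pass to the automorphism group. Since $\mathrm{Aut}(\mG)=\mathrm{G}_2(4){:}2=\mathrm{Aut}(\mathrm{H}(4))$ acts on $(S^\ast,\mathcal{Q}^\ast)$, I would identify the stabiliser of a spread line with the maximal parabolic of $\mathrm{G}_2(4)$ of index $1365$ and deduce flag-transitivity; a flag-transitive generalized hexagon of order $(4,4)$ is, by the classification of Moufang hexagons, the split Cayley hexagon or its dual, and checking which parabolic stabilises a member of $S^\ast$ shows that $S^\ast$ corresponds to the line set of $\mathrm{H}(4)$, giving $(S^\ast,\mathcal{Q}^\ast)\cong\mathrm{H}(4)^D$. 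I expect this identification to be the main obstacle in part $(1)$.

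\emph{Plan for part $(2)$.} Here I would first pin down the order. Fix $x\in\mP'$ and let $L_x\in S$ be the unique spread line through it. Because $(S,\mathcal{Q})$ has order $(4,1)$, $L_x$ lies in exactly two quads $Q_1,Q_2\in\mathcal{Q}$, and because $S^\ast$ induces a spread on every quad (part $(1)$), any quad of $\mathcal{Q}$ through $x$ must contain $L_x$ and hence equals $Q_1$ or $Q_2$. In each $Q_i\cong W(2)$ there are three lines through $x$, one being $L_x$; thus the lines of $\mL'$ through $x$ are $L_x$ together with the two remaining lines of each $Q_i$, exactly $1+2+2=5$ lines, so the valency is $t+1=5$. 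I would also check that $\mG'$ is full, i.e.\ every line of $\mL'$ has all three of its points in $\mP'$: each point of a line inside a quad $Q\in\mathcal{Q}$ lies on one of the five spread lines of $Q$, all of which belong to $S$. Hence $\mG'$ is a full subgeometry of order $(2,4)$, with $|\mP'|=3|S|=315$ (as $(S,\mathcal{Q})$ has $105$ points) and $|\mL'|=525$.

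It remains to establish (NP1) and (NP2) for $\mG'$ with diameter $4$. My strategy is to prove that $\mG'$ is \emph{isometrically embedded} in $\mG$, i.e.\ $\dist_{\mG'}(x,y)=\dist_{\mG}(x,y)$ for all $x,y\in\mP'$. Granting this, (NP2) is inherited from $\mG$: every line of $\mL'$ is a full line of $\mG$, so the point $\pi_L(x)$ nearest $x$ in $\mG$ already lies in $\mP'$ and stays nearest in $\mG'$. The inequality $\dist_{\mG}\le\dist_{\mG'}$ is immediate, and for the reverse I would reroute a geodesic of $\mG$ joining two points of $\mP'$ into $\mG'$, controlling its length through the convexity of the quads and the geodesics of the hexagon $(S,\mathcal{Q})$. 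Concretely, $\dist_{\mG'}(x,y)$ should be governed by the distance of $L_x,L_y$ in the collinearity graph of $(S,\mathcal{Q})$ (diameter $3$) together with their position in the connecting quads, and a case analysis on this hexagon distance both gives the bound $\dist_{\mG'}\le 4$ (yielding diameter $4$ and (NP1)) and matches $\dist_{\mG}$. I expect this distance-preservation argument to be the main obstacle in part $(2)$, since $\mG'$ is not convex in $\mG$ and shortcuts through $\mG\setminus\mG'$ must be excluded using that $(S,\mathcal{Q})$ has no short cycles.
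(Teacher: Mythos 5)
Your plan for part (2) is essentially the paper's own argument: order $(2,4)$ via the two quads of $\mathcal{Q}$ through $L_x$, fullness via the spread of each $W(2)$-quad, and then reduction of (NP1)/(NP2) to an isometric-embedding claim. The paper makes that last step precise by first proving (its Lemma \ref{lem4.12} and Corollary \ref{co4.12}) that for spread lines $K,M$ every point of $K$ lies at distance exactly $\delta$ from a unique point of $M$, where $\delta$ is the distance of $K,M$ in $(S^\ast,\mathcal{Q}^\ast)$, and then invoking the fact that a subhexagon of order $(4,1)$ is isometrically embedded in $\mathrm{H}(4)^D$; your ``case analysis rerouting geodesics'' is aiming at exactly this and would work once you isolate that lemma, so I see no real gap there.

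Part (1) is where there is a genuine problem. Your identification step rests on the claim that a flag-transitive generalized hexagon of order $(4,4)$ must be $\mathrm{H}(4)$ or its dual ``by the classification of Moufang hexagons.'' That is not a theorem you can invoke: the Tits--Weiss classification applies to Moufang hexagons, and it is a well-known open problem whether flag-transitivity of a finite generalized hexagon forces the Moufang condition; finite generalized hexagons of order $(4,4)$ are not classified. So as written the step from ``a generalized hexagon of order $(4,4)$ admitting $\mathrm{G}_2(4){:}2$ flag-transitively'' to ``$\cong \mathrm{H}(4)^D$'' fails. (You could repair it by upgrading to distance-transitivity and citing the classification of finite distance-transitive generalized polygons, or by reconstructing the hexagon as the coset geometry on the two maximal parabolics, but neither is what you wrote.) The paper sidesteps all of this, including the verification of the hexagon axioms: it maps each spread line $\{x,y,xy\}\in S^\ast$ to the long root subgroup $\{e,x,y,xy\}$ of $\mathrm{G}_2(4)$, observes via Corollaries \ref{co4.3} and \ref{co4.6} that two spread lines lie in a common quad if and only if the corresponding long root subgroups commute, and then uses the known fact that the commuting graph on the $1365$ long root subgroups is the collinearity graph of $\mathrm{H}(4)^D$, with lines the maximal $5$-cliques. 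Since every quad carries a spread of five lines of $S^\ast$, the two geometries have isomorphic collinearity graphs with matching line sets, and the isomorphism follows with no appeal to any classification of hexagons.
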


\bigskip \noindent The near octagon $\mG'$ of order $(2,4)$ defined in Theorem \ref{main{:}2}(2) is in fact also a new near octagon. 
From its construction it follows that $\mG'$ has $315$ points and $525$ lines. 
In \cite{adw-hvm}, De Wispelaere and Van Maldeghem showed that the point-line dual of $\HJ$ has a full embedding in the split Cayley hexagon $\mathrm{H}(4)$. By making use of the fact that $(S^\ast,\mathcal{Q}^\ast) \cong \mathrm{H}(4)^D$, we will give another proof for this fact (see Lemma \ref{lem4.16}). 

Our remaining results are regarding constructions of the two largest graphs contained in the Suzuki tower using the near polygon $\mG$ and its subgeometries.

\begin{thm}\label{main:3}
The graph defined by taking those suboctagons of the $\mathrm{G}_2(4)$-near octagon $\mathcal{G}$ that are isomorphic to the Hall-Janko near octagon as vertices and adjacency as intersection in a subhexagon isomorphic to the dual split Cayley hexagon of order $2$, is isomorphic to the $\mathrm{G}_2(4)$-graph. 
\end{thm}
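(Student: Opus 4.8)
The plan is to identify the vertex set of the graph with the $416$ objects on which the $\mathrm{G}_2(4)$-graph $\G_3$ is built, and then to pin down the edge set using the rank-$3$ structure of the relevant permutation group. First I would show that $G = \mathrm{G}_2(4){:}2$ acts transitively on the $416$ Hall-Janko suboctagons of $\mG$ furnished by Theorem \ref{main:1} (the number $416 = [\mathrm{G}_2(4):\mathrm{J}_2]$ being the tell-tale sign). By orbit--stabilizer the stabilizer $G_O$ of one suboctagon $O$ then has order
\[ |G_O| = \frac{|\mathrm{G}_2(4){:}2|}{416} = 1\,209\,600 = |\mathrm{J}_2{:}2| . \]
Since $G_O$ acts on $O$ as a subgroup of $\mathrm{Aut}(O) \cong \mathrm{J}_2{:}2$, which has the same order, this action is an isomorphism $G_O \cong \mathrm{J}_2{:}2$; hence the $416$ suboctagons form, as a $G$-set, exactly the vertex set of $\G_3$ (equivalently, $\mathrm{G}_2(4)$ on the cosets of $\mathrm{J}_2$). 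Because adjacency---meeting in a subhexagon isomorphic to $\mathrm{H}(2)^D$---is defined purely geometrically, the constructed graph is invariant under all of $G$.

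Next I would invoke the fact that $\mathrm{G}_2(4)$ acts on the cosets of $\mathrm{J}_2$ as a rank-$3$ permutation group with subdegrees $1$, $100$ and $315$; this is precisely the rank-$3$ representation from which $\G_3$ arises in the Suzuki tower. Consequently the only $G$-invariant graphs on these $416$ vertices are the empty graph, the complete graph, the $\mathrm{G}_2(4)$-graph $\G_3$ (the orbital graph of valency $100$) and its complement (valency $315$). Since the constructed graph is $G$-invariant, it must be one of these four, and it therefore suffices to show that a fixed suboctagon $O$ is adjacent to exactly $100$ others.

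The remaining, and main, task is this valency count, carried out inside a single Hall-Janko near octagon. I would first determine the number of subhexagons of $O$ isomorphic to $\mathrm{H}(2)^D$, and then, for a fixed such subhexagon $H \subseteq O$, the number of Hall-Janko suboctagons of $\mG$ containing $H$; an incidence count then yields the valency, provided two adjacent suboctagons meet in a unique common $\mathrm{H}(2)^D$. Here the structural dictionary of Theorem \ref{main{:}2}(1) is useful: the line spread and quads of each suboctagon form an $\mathrm{H}(2)^D$ sitting as a subhexagon inside $(S^\ast,\mathcal{Q}^\ast) \cong \mathrm{H}(4)^D$, so the combinatorics of the $416$ suboctagons can be translated into the combinatorics of certain sub-$\mathrm{H}(2)^D$'s of $\mathrm{H}(4)^D$. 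The hard part will be keeping this incidence bookkeeping exact---in particular verifying that the intersection of two adjacent suboctagons is a full $\mathrm{H}(2)^D$ and is unique, and confirming that the resulting valency equals $100$ rather than $315$. Once this is done, the rank-$3$ uniqueness of the preceding paragraph forces the graph to be $\G_3$, the $\mathrm{G}_2(4)$-graph.
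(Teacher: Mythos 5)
Your reduction is sound as far as it goes: transitivity of $G=\mathrm{G}_2(4){:}2$ on the $416$ suboctagons, identification of the stabilizer with $\mathrm{J}_2{:}2$, and the rank-$3$ observation that a $G$-invariant graph on the cosets of $\mathrm{J}_2{:}2$ must be empty, complete, $\Gamma_3$ or its complement, so that everything hinges on the valency being $100$. But that valency computation is precisely where the mathematical content of the theorem lives, and you do not carry it out --- you explicitly defer the verification that two adjacent suboctagons meet in a (unique, full) $\mathrm{H}(2)^D$, the count of suboctagons through a fixed $\mathrm{H}(2)^D$, and the confirmation that the answer is $100$ rather than $315$. None of these is routine from what you have set up: in particular, counting the Hall--Janko suboctagons of $\mathcal{G}$ containing a given $\mathrm{H}(2)^D$-subhexagon is not addressed by Theorem \ref{main{:}2}(1), and the translation into sub-$\mathrm{H}(2)^D$'s of $\mathrm{H}(4)^D$ that you propose would itself need the (nontrivial) fact that the spread-and-quad trace of a Hall--Janko suboctagon is a full $\mathrm{H}(2)^D$-subhexagon of $(S^\ast,\mathcal{Q}^\ast)$. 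As written, the proposal is a strategy with its decisive step missing.

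The paper closes exactly this gap by a group-theoretic dictionary rather than a counting argument, and it is worth seeing why that route is shorter. By Lemma \ref{lem4.26} the suboctagons are precisely the geometries $\mathcal{S}_H$ induced on the sets $\Sigma_H$ of central involutions of the maximal subgroups $H\cong \mathrm{J}_2$ (equivalently $\mathrm{J}_2{:}2$), and the key fact is that $\Sigma_H$ \emph{generates} $H'$, and likewise the point set of an $\mathrm{H}(2)^D$-subhexagon of $\mathcal{S}_H$ generates the corresponding $G_2(2)'\cong \mathrm{U}_3(3)$-subgroup. Hence $\mathcal{S}_{H_1}\cap\mathcal{S}_{H_2}$ has point set $\Sigma_{H_1\cap H_2}$, and ``intersection in an $\mathrm{H}(2)^D$'' is literally equivalent to ``$H_1\cap H_2\cong \mathrm{U}_3(3)$'', which is the standard adjacency of the $\mathrm{G}_2(4)$-graph. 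This gives a direct graph isomorphism with no valency count and makes the rank-$3$ machinery unnecessary. If you want to salvage your outline, the cleanest fix is to import this generation argument to prove the adjacency equivalence; at that point you may as well discard the rank-$3$ step entirely.
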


\begin{thm}\label{main:4}
Define a graph as follows:
\begin{itemize}
\item take the elements of $\{\infty\}$, $A$ and $B$ as vertices, where $\infty$ is just a symbol, $A$ is the set of all Hall-Janko suboctagons of the near octagon $\mG$ and $B$ is the line spread $S^*$ of $\mG$;

\item join $\infty$ to all vertices in $A$, join two distinct vertices of $A$ if the corresponding suboctagons intersect in a subhexagon isomorphic to $\mathrm{H}(2)^D$, join a vertex of $A$ to all the vertices in $B$ that correspond to a line intersecting the suboctagon, join two vertices in $B$ if the corresponding lines are at distance $2$ from each other in the near octagon.
\end{itemize}
Then this graph is isomorphic to the Suzuki graph.
\end{thm}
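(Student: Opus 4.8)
The plan is to prove that the graph $\D$ defined in the statement is strongly regular with the parameters $(1782,416,100,96)$ of the Suzuki graph $\G_4$, and then to identify it with $\G_4$ through its subconstituent structure. The starting observation is that the vertex set decomposes exactly as the subconstituent decomposition of $\G_4$ relative to a fixed vertex: $\infty$ plays the role of that vertex, $A$ (with $|A|=416$ by Theorem \ref{main:1}) is its first subconstituent, and $B=S^\ast$ is its second. A quick count confirms the total, since $|S^\ast|=|\mP|/3=4095/3=1365$ gives $1+416+1365=1782$ vertices. By Theorem \ref{main:3} the subgraph induced on $A$ is precisely the $\mathrm{G}_2(4)$-graph $\G_3$, which is strongly regular with parameters $(416,100,36,20)$; hence $\infty$ is ``locally $\G_3$'', and this is the structural anchor for the whole argument.

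First I would establish that $\D$ is regular of degree $416$ by treating the three vertex types in turn. For $\infty$ this is immediate, as it is joined to all of $A$. For a vertex $a\in A$, I would show that no line of the spread lies inside a Hall--Janko suboctagon, so that the unique spread line through each of the $315$ points of the corresponding suboctagon meets it in exactly one point and $a$ is joined to exactly $315$ vertices of $B$; with its single neighbour $\infty$ and its $100$ neighbours inside $\G_3$ this yields degree $1+100+315=416$. A double count of the $A$--$B$ incidences, $416\cdot 315=1365\cdot 96$, then shows that each spread line meets exactly $96$ suboctagons, so every $b\in B$ has exactly $96$ neighbours in $A$. For the neighbours of $b$ inside $B$ I would invoke Theorem \ref{main{:}2}(1): under the identification of $S^\ast$ with the points of $\mathrm{H}(4)^D$, two spread lines are at distance $2$ in $\mG$ exactly when they are at distance $2$ in the generalized hexagon (hexagon-collinear spread lines lie in a common quad, hence have collinear points and are at $\mG$-distance $1$, not $2$). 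Since the collinearity graph of a generalized hexagon of order $(4,4)$ is distance-regular with $k_2=s^2t(t+1)=320$, each $b$ has exactly $320$ neighbours in $B$, giving degree $96+320=416$.

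The heart of the proof is the verification of the two common-neighbour numbers $\lambda=100$ and $\mu=96$ across all pairs. The pairs involving $\infty$ and the pairs inside $A$ are controlled directly by the strong regularity of $\G_3$: two adjacent vertices of $A$ share $\infty$, their $36$ common $\G_3$-neighbours, and (it must be shown) exactly $63$ common neighbours in $B$, totalling $100$; a non-adjacent pair in $A$ shares $\infty$, their $20$ common $\G_3$-neighbours, and $75$ common $B$-neighbours. The genuinely difficult cases are the mixed pairs $(a,b)$ with $a\in A$, $b\in B$, and the pairs inside $B$, where the common neighbours are spread across all three classes. I expect this to be the main obstacle, as it requires understanding precisely how spread lines at a prescribed near-octagon distance meet the Hall--Janko suboctagons and how they sit relative to the quads and to the hexagon $\mathrm{H}(4)^D$. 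The most economical way to organise these counts is to exploit the action of $\mathrm{G}_2(4){:}2$, which fixes $\infty$ and acts on both $A$ and $B$: this reduces each adjacency relation to a small number of orbitals, so that every common-neighbour number need only be computed on one representative pair of each orbit, the within-$B$ contributions being read off from the distance distribution of the distance-regular hexagon.

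Finally, once $\D$ is known to be strongly regular with parameters $(1782,416,100,96)$, I would identify it with the Suzuki graph. The cleanest route is the local characterization: $\D$ is connected and, by the counts above, locally the $\mathrm{G}_2(4)$-graph at every vertex (this holds at $\infty$ for free by Theorem \ref{main:3}, and at the vertices of $A$ and $B$ as a by-product of the common-neighbour analysis), and one invokes the known characterization of $\G_4$ as the unique connected, locally $\mathrm{G}_2(4)$ strongly regular graph with these parameters. Alternatively one can build the isomorphism explicitly: the stabilizer $\mathrm{G}_2(4){:}2$ of a vertex of $\G_4$ in $\mathrm{Suz}{:}2$ acts on its two subconstituents exactly as $\mathrm{G}_2(4){:}2$ acts on $A$ and on $B=S^\ast$, and matching these permutation actions together with their orbital adjacency relations yields $\D\cong\G_4$.
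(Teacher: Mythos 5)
Your strategy --- verify that the graph is strongly regular with parameters $(1782,416,100,96)$ and then appeal to a local characterization of the Suzuki graph --- is entirely different from the paper's, and as written it has genuine gaps precisely where you say you expect difficulty. The regularity computation is fine: the degree counts $1+100+315$ and $96+320$ and the double count $416\cdot 315=1365\cdot 96$ all check out, using Lemma \ref{lem4.21} (a spread line meets a Hall-Janko suboctagon in at most one point, since the suboctagon is a subspace) and Lemma \ref{lem4.12} (distance $2$ in $\mG$ between spread lines equals distance $2$ in $\mathrm{H}(4)^D$). But the $\lambda$- and $\mu$-counts for mixed pairs and for pairs inside $B$, and even the claim that two adjacent suboctagons have exactly $63$ common neighbours in $B$, are asserted rather than proved: for the latter you would still have to rule out a spread line meeting $\mH_1$ and $\mH_2$ in two \emph{different} points neither of which lies in $\mH_1\cap\mH_2$, and for a pair of spread lines at distance $2$ you need the number of suboctagons meeting both, which cannot be ``read off from the distance distribution of the hexagon.'' The final identification step is also heavier than you suggest: the characterization of the Suzuki graph you invoke requires the graph to be locally $\G_3$ at \emph{every} vertex, and determining the isomorphism type of the local graph at a vertex of $B$ (96 suboctagons together with 320 spread lines) is at least as hard as the parameter verification itself. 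So the proposal is a viable programme but not yet a proof.

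The paper avoids all of this counting. It takes Suzuki's original recursive construction of $\G_4$ from $\G_3$ (vertex set $\{\infty\}\cup V(\G_3)\cup S$ with $S$ the conjugacy class of long root subgroups, and explicit group-theoretic adjacency rules) and matches it term by term with the construction in the statement: vertices of $\G_3$ correspond to Hall-Janko suboctagons by Theorem \ref{main:3}, long root subgroups correspond to spread lines via the correspondence $\{x,y,xy\}\leftrightarrow\{e,x,y,xy\}$ used in Lemma \ref{lem4.11}, the adjacency ``non-commuting but with a common commuting long root subgroup'' translates into ``spread lines at distance $2$,'' and the adjacency ``a non-trivial element of the subgroup fixes the vertex'' translates, via Lemma \ref{lem4.33}, into ``the spread line meets the suboctagon.'' This reduces the theorem to a dictionary, with no parameter or eigenvalue computations at all. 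If you want to salvage your route, the orbit-by-orbit counts you sketch would have to be carried out in full; the paper's translation argument is already complete given Lemmas \ref{lem4.26} and \ref{lem4.33}.
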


\bigskip \noindent \textbf{Remarks.}
\begin{enumerate}[(1)]
\item The main result of the present paper is the construction of two point-transitive near octagons, one of order $(2,10)$ and another one of order $(2,4)$ which occurs as full subgeometry of the former. In general, it is hard to construct new ``nice near polygons'', for example point-transitive near polygons which are not bipartite graphs. Besides some infinite families (the most recent one being discovered around 15 years ago \cite{Bart03}), there are also some examples related to sporadic simple groups which were discovered around 1980 \cite{As,Br-Co-Ha-Wi,Co,Sh-Ya}.

\item We do not know whether there is a near polygon corresponding to $\mathrm{Suz}$, but certainly the involution geometry of $\mathrm{Suz}$ studied in \cite{Yoshiara88, Bardoe96} is not a near polygon. We can directly see from the suborbit diagram \cite[Fig. 1]{Bardoe96} that there are point-line pairs $(p, L)$ where each point of $L$ is at the same distance from $p$. However, this involution geometry is a near $9$-gon in the sense of \cite[Sec. 6.4]{BCN}. Similarly the involution geometry of the Conway group $\mathrm{Co}_1$ \cite[Fig. 1]{Ba}, which contains $\mathrm{Suz}$, is a near $11$-gon.

\item It was pointed out to us by one of the referees of the follow up paper \cite{ab-bdb:3} that the new near octagon $\mathcal{G}$ can be found in the second layer of the three fold cover of the Suzuki graph discovered by Soicher \cite{S93} (see Remark 1.4 of \cite{ab-bdb:3}). In \cite{ab-bdb:4} we will show that the near octagon $\mathcal{G}'$ can also be found in the second layer of another graph discovered by Soicher in \cite{S93}, namely a three fold cover of the unique strongly regular graph with parameters $(v,k,\lambda,\mu)=(162,56,10,24)$. 

\item The structure of both $\mathcal{G}$ and $\mathcal{G}'$ around a fixed point can be described by a diagram (like in Figure \ref{fig:sub1}, with possible different numbers around the big nodes). In \cite{ab-bdb:4}, we also give a common treatment for those near octagons whose local structure is described by such a diagram.
\end{enumerate}

\bigskip Our paper is organized as follows. In Section \ref{sec3}, we prove that $\mathcal{G}$ is a near octagon and discuss the local structure of $\mathcal{G}$ with respect to a fixed point. This structure will be described by means of a so-called suborbit diagram. In Section \ref{sec4} we prove several geometrical properties of $\mathcal{G}$. The properties related to the line spread $S^*$ and the quads of $\mathcal{G}$ are discussed in Subsection \ref{sec4.1} where we also prove Theorem \ref{main{:}2}. In Subsection \ref{sec4.2} we classify all suboctagons of $\mathcal{G}$ that are isomorphic to the Hall-Janko near octagon $\HJ$, and in Subsection \ref{sec4.3} we determine the full automorphism group of $\mathcal{G}$, hereby completing the proof of Theorem \ref{main:1}. With the help of the derived geometrical properties we then prove Theorems \ref{main:3} and \ref{main:4} in Section \ref{sec5}.

Our initial explorations of the properties of the new near octagon happened in a (computer) model for $\mathcal{G}$ which was quite different from the one given in Theorem \ref{main:1}. We first constructed the new near octagon as a subgeometry of the so-called \textit{valuation geometry} of $\HJ$. Examination of the properties\footnote{Among other things, we computed the full automorphism group with the aid of SAGE \cite{Sage}, and checked that its derived subgroup is a simple group of order $|\mathrm{G}_2(4)|$ that has index $2$ in the full automorphism group, see \cite{ab-bdb:2}.} of this new near octagon showed that the full automorphism group was most likely isomorphic to $\mathrm{G}_2(4){:}2$. Looking for a model of $\mathcal{G}$ in the same spirit as the model for $\HJ$ (using involutions) was successful and ultimately resulted in the (more symmetric) description given in Theorem \ref{main:1}. In appendix \ref{appA}, we discuss the first model we had for this new near octagon, and show that it is indeed isomorphic to the model presented in Theorem \ref{main:1}. 

\section{Preliminaries} \label{sec2}

A {\em point-line geometry} $\mathcal{S}$ is a triple $(\mP,\mL,\mI)$, where $\mP$ is the non-empty point set, $\mL$ is the line set and $\mI \subseteq \mP \times \mL$ is the incidence relation. $\mathcal{S}$ is called a {\em partial linear space} if any pair of distinct points is incident with at most one line. In this case we may identify each line with the subset of points it is incident with and replace $\mI$ with set inclusion. By abuse of notation, we then denote $\mathcal{S}$ by $(\mP, \mL)$. The distance between two points $x_1$ and $x_2$ of a point-line geometry $\mathcal{S}$ will always be measured in its collinearity graph. This distance will be denoted by $\dist_{\mS}(x_1,x_1)$, or shortly by $\dist(x_1,x_2)$ if no confusion could arise. 
If $\Gamma$ is the collinearity graph of a geometry $\mS$, $x$ a point of $\mS$ and $i \in \N$, then $\Gamma_i(x)$ denotes the set of points of $\mathcal{S}$ at distance $i$ from $x$. If $X_1$ and $X_2$ are two nonempty sets of points, then $\dist(X_1,X_2)$ denotes the minimal distance between a point of $X_1$ and a point of $X_2$. If $X$ is a nonempty set of points and $i \in \N$, then $\Gamma_i(X)$ denotes the set of points at distance $i$ from $X$. 
Two points (resp. lines) of a point-line geometry $\mS$ are called \textit{opposite} if they have the maximum possible distance between them in the collinearity graph of $\mS$. 

Let $X$ be a nonempty set of points of a partial linear space $\mS$. It is called a \textit{subspace} if every line meeting $X$ in at least two points is completely contained in $X$. It is called \textit{geodetically closed} or \textit{convex} if every point on every shortest path between two points of $X$ is contained in $X$.  A \textit{line spread} of $\mS$ is a set of pairwise disjoint lines such that every point of $\mS$ lies on at least one of these lines. 
An \textit{ovoid} of $\mS$ is a set of pairwise noncollinear points of $\mS$ such that every line of $\mS$ contains at least one of these points.  

A point-line geometry $\mS = (\mP, \mL, \mI)$ is a \textit{subgeometry} of another point-line geometry $\mS' = (\mP ', \mL ', \mI')$ if $\mP \subseteq \mP'$, $\mL \subseteq \mL '$ and $\mI = \mI ' \cap (\mP  \times \mL)$. A subgeometry is called \textit{full} if for every line $L$ in $\mL$ the set $\{x \in \mP : x ~\mI~ L\}$ is equal to $\{x \in \mP' : x ~\mI'~ L\}$. If $d_{\mS}(x,y) =  d_{\mS'}(x,y)$ for every two points $x,y$ in $\mP$, then we will say that $\mS$ is \textit{isometrically embedded} into $\mS'$. 

A {\em quad} of a near polygon is a convex subspace $Q$ of diameter 2 such that the full subgeometry determined by those points and lines that are contained in $Q$ is a nondegenerate generalized quadrangle. 
In \cite[Prop.~2.5]{Sh-Ya}, it was shown that if $a$ and $b$ are two points of a near polygon at distance $2$ from each other, and if $c$ and $d$ are two common neighbours of $a$ and $b$ such that at least one of the lines $ac$, $ad$, $bc$, $bd$ contains at least three points, then $a$ and $b$ are contained in a unique quad. This quad coincides with the smallest convex subspace containing $a$, $b$ and consists of all points of the near polygon which have distance at most $2$ from $a$, $b$, $c$ and $d$. A point-quad pair $(x,Q)$ in a near polygon is called \textit{classical} if there exists a unique point $x'$ in $Q$ such that $\dist(x,y) = \dist(x,x') + \dist(x', y)$ for all $y$ in $Q$, and \textit{ovoidal} if the points of $Q$ that are nearest to $x$ form an ovoid of $Q$. A quad $Q$ is called {\em classical} if the pair $(x,Q)$ is classical for every point $x$. In \cite[Proposition 2.6]{Sh-Ya}, it was shown that in a near polygon in which each line is incident with at least three points, every point-quad pair is either classical or ovoidal.
We will be using these basic results on near polygons and their quads in our proofs without making an explicit reference. 

A near $2d$-gon $\mathcal{S}$ with $d \geq 2$ is called {\em regular} if it has order $(s,t)$ and there exist constants $t_i$, $i \in \{0, \ldots, d\}$, such that for every pair of points $x$ and $y$ at distance $i$, there are precisely $t_i + 1$ lines through $y$ containing a point at distance $i-1$ from $x$. Clearly, $t_0 = -1$, $t_1 = 0$ and $t_d = t$. We will say that $\mathcal{S}$ is regular with parameters $(s, t;t_2, \ldots, t_{d-1})$. A generalized $2d$-gon is a regular near $2d$-gon with parameters $(s,t;0, \ldots, 0)$. The Hall-Janko near octagon is a regular near octagon with parameters $(2,4;0,3)$. In fact it is the unique regular near octagon with those parameters, as proved by Cohen and Tits in \cite{Co-Ti}. 

For all the group theoretical notations we refer to the ATLAS \cite{Atlas}. An involution $a$ of a group $G$ is called \textit{central} if there exists a Sylow $2$-subgroup $H$ of $G$ such that $a \in C_G(H)$, or equivalently if the centralizer of $a$ contains a Sylow $2$-subgroup. It is well known that the group $\mathrm{G}_2(4){:}2$ has $\mathrm{J}_2{:}2$ as a maximal subgroup of index $416$ and the group $\mathrm{J}_2{:}2$ has $G_2(2)$ as a maximal subgroup of index $100$. The conjugacy class of central involutions of the groups $\mathrm{G}_2(4){:}2$, $\mathrm{J}_2{:}2$, and $G_2(2)$, are all denoted by the symbol $2A$ in the ATLAS. If $H \cong G_2(2)$, $K \cong \mathrm{J}_2{:}2$ and $G \cong \mathrm{G}_2(4){:}2$ are such that $H < K < G$ and if $\Sigma_H$, $\Sigma_K$, $\Sigma_G$ denote the corresponding conjugacy classes of central involutions, then $\Sigma_H = H \cap \Sigma_K = H \cap \Sigma_G$ and $\Sigma_K = K \cap \Sigma_G$.
 Moreover, $\Sigma_H \subseteq H' \cong \mathrm{U}_3(3)$ (the derived subgroup), $\Sigma_K \subseteq K' \cong \mathrm{J}_2$ and $\Sigma_G \subseteq G' \cong \mathrm{G}_2(4)$. Since $\Sigma_G$ generates a normal subgroup of the simple group $G'$, we necessarily have $\langle \Sigma_G \rangle = G'$.
  Similarly, $\langle \Sigma_H \rangle = H'$ and $\langle \Sigma_K \rangle = K'$.

There exists a natural bijective correspondence between the subgroups of $G = \mathrm{G}_2(4){:}2$ isomorphic to $\mathrm{J}_2{:}2$ and the subgroups isomorphic to $\mathrm{J}_2$. Every subgroup isomorphic to $\mathrm{J}_2{:}2$ contains a unique $\mathrm{J}_2$-subgroup, namely its derived subgroup. Conversely, from ATLAS information we see that every $\mathrm{J}_2$-subgroup $K$ of $G$ must be contained in a (maximal) subgroup isomorphic to $\mathrm{J}_2{:}2$, and such a maximal is uniquely determined by $K$ as it necessarily coincides with the normalizer of $K$ inside $G$. 

\section{Construction of the new near octagon} 
\label{sec3}

The group $G = \mathrm{G}_2(4){:}2$ has precisely three conjugacy classes of involutions. The class $2A$ consists of $4095$ involutions all of which are central and contained in the derived subgroup $G' \cong \mathrm{G}_2(4)$. A computer model of the group $G$ can be easily constructed using the computer programming language GAP \cite{Gap}. All claims of the present section have been verified using such a computer model, see \cite{ab-bdb:2}.  

Let $\mP$ denote the elements of the class $2A$ and $\omega$ a fixed element in $\mP$. The group $G$ acts transitively on $\mP$ by conjugation. Let $G_\omega$ be the stabilizer of $\omega$ under this action. Then the action of $G_\omega$ on $\mP$ partitions $\mP$ into eight orbits, which we refer to as \textit{suborbits} of the action of $G$ on $\mP$. The suborbits are of sizes $1$, $2$, $20$, $40$, $320$, $640$, $1024$, $2048$, and we label them as $\mO_0$, $\mO_{1a}$, $\mO_{1b}$, $\mO_{2a}$, $\mO_{2b}$, $\mO_{3a}$, $\mO_{3b}$, $\mO_{4}$, respectively.
 We put $\mathcal{O}_i := \mathcal{O}_{ia} \cup \mathcal{O}_{ib}$ for $i \in \{1,2,3\}$. There are precisely $62$ elements of $\mP \setminus \{ \omega \}$ that commute with $\omega$ and they lie in the suborbits $\mO_{1a}$, $\mO_{1b}$ and $\mO_{2a}$. If $\mO$ is one of these three suborbits and $x$ belongs to $\mO$ then the product $x \omega$ also belongs to $\mO$. 
These three suborbits can also be characterized by the index of the normalizer of the subgroup generated by $\omega$ and an element $x$ of these suborbits. 
The indices for $\mO_{1a}$, $\mO_{1b}$ and $\mO_{2a}$ are $1365$, $13650$ and $27300$, respectively. Therefore, if we take $\mP$ and $\mL$ to be the sets of points and lines as defined in Theorem \ref{main:1} then the points collinear with $\omega$ are in $\mO_{1a}$ and $\mO_{1b}$. In fact we get the \textit{suborbit diagram} drawn in Figure~\ref{fig:sub1} which explains the choice of notation for these suborbits. Suborbits containing central involutions at distance $i$ from $\omega$ in the geometry have been labelled $\mO_{i*}$ where $*$ is $a$, $b$ or void.

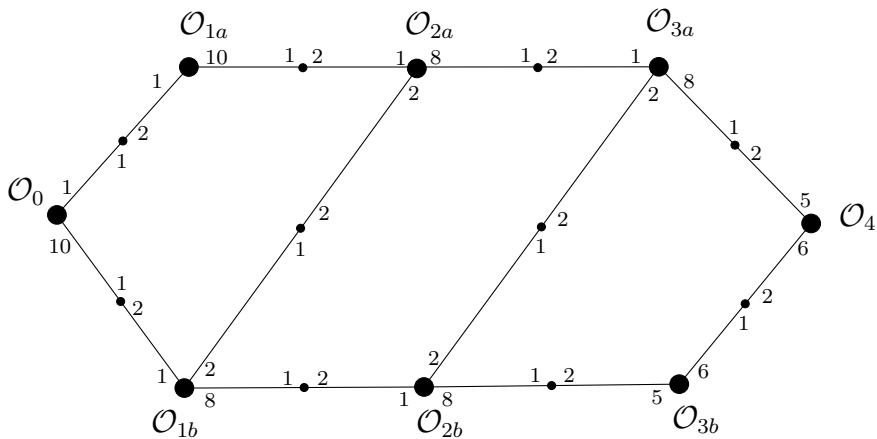
\begin{figure}
\begin{center}
\begin{tikzpicture}[line cap=round,line join=round,>=triangle 45,x=1.0cm,y=1.0cm]
\draw (0.3393083836366923,3.0573010907233384)-- (-1.39563107885606,1.0960651766010965);
\draw (-1.39563107885606,1.0960651766010965)-- (0.28,-1.2);
\draw (0.28,-1.2)-- (3.356594405363218,3.0573010907233384);
\draw (0.3393083836366923,3.0573010907233384)-- (3.356594405363218,3.0573010907233384);
\draw (3.356594405363218,3.0573010907233384)-- (6.52,3.06);
\draw (6.52,3.06)-- (8.523696717569893,0.9829169507863518);
\draw (0.28,-1.2)-- (3.432026555906381,-1.1857573773295889);
\draw (3.432026555906381,-1.1857573773295889)-- (6.52,3.06);
\draw (3.432026555906381,-1.1857573773295889)-- (6.788757255077141,-1.1480413020580074);
\draw (6.788757255077141,-1.1480413020580074)-- (8.523696717569893,0.9829169507863518);
\draw (3.03295311787782,3.933501017569001) node[anchor=north west] {$\mathcal{O}_{2a}$};
\draw (6.20,3.9669953715641197) node[anchor=north west] {$\mathcal{O}_{3a}$};
\draw (8.743740474045596,1.3879301139399627) node[anchor=north west] {$\mathcal{O}_4$};
\draw (6.55,-1.30) node[anchor=north west] {$\mathcal{O}_{3b}$};
\draw (3.183677710855855,-1.35) node[anchor=north west] {$\mathcal{O}_{2b}$};
\draw (-0.30,-1.35) node[anchor=north west] {$\mathcal{O}_{1b}$};
\draw (0.06870278930979522,3.933501017569001) node[anchor=north west] {$\mathcal{O}_{1a}$};
\draw (-2.2,1.722873653891152) node[anchor=north west] {$\mathcal{O}_0$};
\draw (1.45,3.45) node[anchor=north west] {{\scriptsize 1}};
\draw (4.55,3.45) node[anchor=north west] {{\scriptsize 1}};
\draw (7.29,2.50) node[anchor=north west] {{\scriptsize 1}};
\draw (7.42,-0.11) node[anchor=north west] {{\scriptsize 1}};
\draw (4.75,0.92) node[anchor=north west] {{\scriptsize 1}};
\draw (1.59,0.88) node[anchor=north west] {{\scriptsize 1}};
\draw (-0.76,2.05) node[anchor=north west] {{\scriptsize 1}};
\draw (-0.76,0.42) node[anchor=north west] {{\scriptsize 1}};
\draw (1.42,-0.83) node[anchor=north west] {{\scriptsize 1}};
\draw (4.68,-0.80) node[anchor=north west] {{\scriptsize 1}};
\draw (-0.48,2.42) node[anchor=north west] {{\scriptsize 2}};
\draw (1.81,3.44) node[anchor=north west] {{\scriptsize 2}};
\draw (4.90,3.43) node[anchor=north west] {{\scriptsize 2}};
\draw (7.58,2.15) node[anchor=north west] {{\scriptsize 2}};
\draw (-0.55,0.10) node[anchor=north west] {{\scriptsize 2}};
\draw (1.90,1.35) node[anchor=north west] {{\scriptsize 2}};
\draw (5.04,1.28) node[anchor=north west] {{\scriptsize 2}};
\draw (1.88,-0.83) node[anchor=north west] {{\scriptsize 2}};
\draw (5.12,-0.80) node[anchor=north west] {{\scriptsize 2}};
\draw (7.73,0.25) node[anchor=north west] {{\scriptsize 2}};
\draw (-1.65,0.92) node[anchor=north west] {{\scriptsize 10}};
\draw (-1.48,1.70) node[anchor=north west] {{\scriptsize 1}};
\draw (0.14,3.1) node[anchor=north east] {{\scriptsize 1}};
\draw (0.41,2.95) node[anchor=south west] {{\scriptsize 10}};
\draw (2.91,3.41) node[anchor=north west] {{\scriptsize 1}};
\draw (3.37,3.42) node[anchor=north west] {{\scriptsize 8}};
\draw (3.08,2.95) node[anchor=north west] {{\scriptsize 2}};
\draw (6.23,2.93) node[anchor=north west] {{\scriptsize 2}};
\draw (8.20,0.89) node[anchor=north west] {{\scriptsize 6}};
\draw (8.23,1.53) node[anchor=north west] {{\scriptsize 5}};
\draw (6.70,3.11) node[anchor=north west] {{\scriptsize 8}};
\draw (6.00,3.43) node[anchor=north west] {{\scriptsize 1}};
\draw (6.89,-0.74) node[anchor=north west] {{\scriptsize 6}};
\draw (6.28,-1.09) node[anchor=north west] {{\scriptsize 5}};
\draw (3.52,-1.11) node[anchor=north west] {{\scriptsize 8}};
\draw (3.34,-0.57) node[anchor=north west] {{\scriptsize 2}};
\draw (2.95,-1.11) node[anchor=north west] {{\scriptsize 1}};
\draw (-0.23,-0.81) node[anchor=north west] {{\scriptsize 1}};
\draw (0.40,-0.71) node[anchor=north west] {{\scriptsize 2}};
\draw (0.40,-1.12) node[anchor=north west] {{\scriptsize 8}};
\begin{scriptsize}
\draw [fill=black] (-1.39563107885606,1.0960651766010965) circle (3.5pt);
\draw [fill=black] (0.3393083836366923,3.0573010907233384) circle (3.5pt);
\draw [fill=black] (0.28,-1.2) circle (3.5pt);
\draw [fill=black] (3.356594405363218,3.0573010907233384) circle (1.5pt);
\draw [fill=black] (6.52,3.06) circle (3.5pt);
\draw [fill=black] (8.523696717569893,0.9829169507863518) circle (3.5pt);
\draw [fill=black] (3.432026555906381,-1.1857573773295889) circle (3.5pt);
\draw [fill=black] (6.788757255077141,-1.1480413020580074) circle (3.5pt);
\draw [fill=black] (3.337736367727427,3.0384430530875477) circle (3.5pt);
\draw [fill=black] (1.8385223756820597,3.0478720719054433) circle (1.5pt);
\draw [fill=black] (4.928868183863713,3.049221526543774) circle (1.5pt);
\draw [fill=black] (7.521848358784946,2.021458475393176) circle (1.5pt);
\draw [fill=black] (7.656226986323517,-0.0825621756358278) circle (1.5pt);
\draw [fill=black] (5.1103919054917615,-1.166899339693798) circle (1.5pt);
\draw [fill=black] (4.97601327795319,0.9371213113352056) circle (1.5pt);
\draw [fill=black] (1.8560132779531906,-1.1928786886647944) circle (1.5pt);
\draw [fill=black] (1.8088681838637135,0.9192215265437739) circle (1.5pt);
\draw [fill=black] (-0.55781553942803,-0.05196741169945174) circle (1.5pt);
\draw [fill=black] (-0.5281613476096838,2.0766831336622174) circle (1.5pt);
\end{scriptsize}
\end{tikzpicture}
\end{center}
\caption{The suborbit diagram} 
\label{fig:sub1}
\end{figure}

In the literature, suborbit diagrams for finite simple groups where adjacency (in the collinearity graph of the involution geometry) is defined by commutativity have been studied \cite{Bardoe96, Ba}. For drawing the suborbit diagram we have used similar conventions as in \cite{Bardoe96}. In our case adjacency involves both commutativity and a condition on the index of certain normalizers. 
Each of the eight big nodes of the diagram denotes a suborbit and an edge between two such nodes denotes that there is a line that intersects both suborbits.  A smaller node on each edge denotes a line and the two accompanying numbers denote the number of points of the line that lie in the suborbits it intersects. Each number on a big node denotes the number of lines through a given point in that suborbit going to another suborbit. 

We would be using the suborbit diagram in most of our arguments where by ``suborbit diagram with respect to $x$'' we would mean that $\omega = x$ and all the suborbits are defined by the centralizer of the involution $x$ of $\mathrm{G}_2(4){:}2$. If we wish to explicitly indicate the involution $x$ with respect to which the suborbits are considered, we will use the notations $\mathcal{O}_0(x), \mathcal{O}_{1a}(x),\ldots,\mathcal{O}_4(x)$.

\medskip
\begin{thm}
The point-line geometry $\mathcal{G} = (\mP, \mL)$ formed by taking $\mP$ as the conjugacy class of $4095$ central involutions of the group $G = \mathrm{G}_2(4){:}2$ and lines as the three element subsets $\{x, y, xy\}$ of those commuting involutions $x$, $y$ in $\mP$ that satisfy the condition $[G : N_G(\langle x,y \rangle)] \in \{1365, 13650\}$ is a near octagon of order $(2,10)$.
\end{thm}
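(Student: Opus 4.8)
The plan is to verify the two defining axioms (NP1) and (NP2) of a near octagon, together with the order $(2,10)$, by exploiting the suborbit diagram of Figure~\ref{fig:sub1} and the transitivity of $G$ on $\mP$. Since $G$ acts transitively on the point set $\mP$ and (as one checks) preserves the line set $\mL$, it suffices to establish all the relevant distance and incidence conditions from the vantage point of the single fixed involution $\omega$; every other point is $G$-equivalent to $\omega$, so a property verified at $\omega$ holds everywhere. The first routine step is to confirm that $\mL$ really consists of $3$-element subsets: if $x,y \in \mP$ commute and $xy \in \mP$, then $\{x,y,xy\}$ is closed under the group operation up to the identity, and the index condition $[G:N_G(\langle x,y\rangle)] \in \{1365,13650\}$ is exactly what singles out the suborbits $\mO_{1a}$ and $\mO_{1b}$. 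I would then read off from the diagram that through $\omega$ there pass exactly $11$ lines (the numbers on the big nodes $\mO_{1a}$ and $\mO_{1b}$ leading back toward $\omega$ give $1+10 = 11$ lines to the neighbour suborbits), giving $t+1 = 11$, i.e.\ $t = 10$; each such line meets $\{\omega\} \cup \mO_{1a} \cup \mO_{1b}$ in three points, giving $s+1 = 3$, i.e.\ $s = 2$.

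For (NP1) I must check that the collinearity graph is connected with diameter exactly $4$. Connectivity follows because $\langle \Sigma_G \rangle = G'$ (stated in the preliminaries) forces the involutions to generate, which together with transitivity shows there is no proper nonempty subspace closed under collinearity; more concretely, the suborbit diagram exhibits, for each suborbit $\mO_{i*}$, a line joining it to some suborbit $\mO_{(i-1)*}$ nearer to $\omega$, so every point is joined to $\omega$ by a path. That the suborbits labelled $\mO_{i*}$ genuinely consist of points at geometric distance $i$ from $\omega$ — and hence that the diameter is $4$ — is precisely what the labelling asserts and what the edge pattern of the diagram encodes; I would verify this by induction on $i$, using the rule that an edge between big nodes $\mO_{i*}$ and $\mO_{j*}$ appears only when $|i-j| \le 1$, so no line can decrease distance by more than one step, while the presence of a descending line at each node shows distance cannot exceed the index.

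The crux is (NP2): for every point $p$ and every line $L$, there is a \emph{unique} point of $L$ nearest to $p$. By transitivity take $p = \omega$. A line $L = \{u,v,w\}$ has its three points lying in the suborbits $\mO_{i*}(\omega)$, and NP2 is equivalent to the statement that these three distance-values never form a configuration in which two (or three) points of $L$ are simultaneously at the minimum distance from $\omega$. Concretely, I would enumerate, for each edge of the suborbit diagram, the triple of distances $(\dist(\omega,u),\dist(\omega,v),\dist(\omega,w))$ realized by the three points of a line of that type, and check that exactly one entry attains the minimum. The diagram is drawn precisely so that along each edge the smaller node carries point-counts splitting a line into parts lying in suborbits of distances $i$ and $i+1$ (or equal-distance parts only when the minimum is still uniquely attained elsewhere on the line); the absence of any point-line pair where all points of $L$ are equidistant from $\omega$ is exactly the feature that distinguishes $\mG$ from the non-near-polygon involution geometries of $\mathrm{Suz}$ and $\mathrm{Co}_1$ mentioned in Remark~(2). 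I expect this verification of the projection property, carried out case by case over the edge types of the diagram, to be the main obstacle: one must rule out every ``bad'' distance triple, and this is where the specific choice of the two index values $1365$ and $13650$ (rather than also including $27300$, i.e.\ $\mO_{2a}$) is essential, since admitting the third commuting suborbit would create lines violating NP2. All of these finitely many checks are confirmed in the computer model \cite{ab-bdb:2}, so the proof reduces to reading them off the verified suborbit diagram.
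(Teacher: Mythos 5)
Your proposal is correct and follows essentially the same route as the paper: fix a base involution $\omega$, use transitivity of $G$ on $\mP$, and read off from the (computer-verified) suborbit diagram that the diameter is $4$, that every line meets some $\mO_i$ in one point and $\mO_{i+1}$ in two points (giving the unique nearest point required by (NP2)), and that $\omega$ has $22$ neighbours, hence order $(2,10)$. The extra commentary you add (the induction on distance labels, the remark about excluding $\mO_{2a}$) elaborates on, but does not change, the paper's argument.
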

\begin{proof}
Let $x$ be a fixed central involution of $G$, i.e., a point of $\mP$. It is clear from the suborbit diagram that every other involution is at distance at most $4$ from $x$. 
Therefore the point-line geometry is connected and has diameter $4$. Now let $L$ be any line, then from the suborbit diagram there exists an $i \in \{0,1,2,3\}$ such that $L$ intersects $\mO_{i}$ in one point and $\mO_{i+1}$ in two points. Therefore there exists a unique point on $L$ nearest to $x$. Since there are exactly $22$ neighbours of $x$, and since the automorphism group acts transitively on points, we get that the point-line geometry is a near octagon of order $(2,10)$. 
\end{proof}

\bigskip \noindent \textbf{Remarks.} 
\begin{enumerate}[(1)]
\item Among the conjugacy classes of involutions, the one consisting of the central involutions has the smallest size, namely $4095 = 3 \cdot 1365$. So, among the conjugacy classes of subgroups of Type $C_2 \times C_2$, there exists only one of size 1365, namely the one consisting of all subgroups of the form $\langle x,y \rangle$, where $x$ and $y$ are two distinct central involutions satisfying $[G : N_G(\langle x,y \rangle)] =1365$. This class consists of the 1365 long root subgroups of $\mathrm{G}_2(4)$.

\item Unlike the group $\mathrm{G}_2(4){:}2$, the group $\mathrm{J}_2{:}2$ has only one orbit on the pairs $(x,y)$ of distinct commuting central involutions. If $H$ is a subgroup of $\mathrm{G}_2(4){:}2$ isomorphic to $\mathrm{J}_2{:}2$ and $x,y$ are two distinct commuting central involutions of $H$, then $[G : N_G(\langle x,y \rangle)] = 13650$.
\end{enumerate}
\section{Properties of the new near octagon} \label{sec4}

In this section, we derive several properties of the near octagon $\mathcal{G}$. All of these will be derived from the information provided in Section \ref{sec3}.

\subsection{A line spread and the quads of the near octagon} \label{sec4.1}

\begin{lem} \label{lem4.1}
The set $S^\ast$ which consists of all lines of the form $\{x, y, xy\}$ where $x,y$ are two distinct central involutions of $G = \mathrm{G}_2(4){:}2$ satisfying $[G:N_G( \langle x,y \rangle)] = 1365$ is a line spread of $\mathcal{G}$. 
\end{lem}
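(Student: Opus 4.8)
The plan is to verify directly the two defining properties of a line spread — coverage of every point and pairwise disjointness — by establishing the sharper statement that through each point of $\mathcal{G}$ there passes \emph{exactly} one line of $S^\ast$. This single count yields both properties simultaneously: coverage is immediate, and disjointness follows because two distinct lines of $S^\ast$ meeting in a point $\omega$ would give two lines of $S^\ast$ through $\omega$, contradicting uniqueness.

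First I would fix a point $\omega \in \mathcal{P}$ and describe the lines of $S^\ast$ through it. Such a line has the form $\{\omega, x, \omega x\}$, and by definition it lies in $S^\ast$ precisely when $[G : N_G(\langle \omega, x\rangle)] = 1365$. By the characterization of the suborbits through the index of the normalizer recorded in Section~\ref{sec3}, this index equals $1365$ exactly when $x \in \mathcal{O}_{1a}(\omega)$. Hence the points of $\mathcal{G}$ lying on a line of $S^\ast$ through $\omega$, other than $\omega$ itself, are precisely the elements of $\mathcal{O}_{1a}(\omega)$.

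Next I would invoke the two structural facts from Section~\ref{sec3}: the suborbit $\mathcal{O}_{1a}$ has size $2$, and it is closed under multiplication by $\omega$ (if $x \in \mathcal{O}_{1a}$ then $\omega x \in \mathcal{O}_{1a}$). Writing $\mathcal{O}_{1a}(\omega) = \{a, \omega a\}$ — the two listed elements being distinct since $\omega \neq 1$ forces $\omega a \neq a$ — the subgroup $\langle \omega, a\rangle = \{1, \omega, a, \omega a\}$ is the unique Klein four-subgroup containing $\omega$ with the required normalizer index, and its three involutions constitute the unique line $\{\omega, a, \omega a\}$ of $S^\ast$ through $\omega$. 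Since $\omega$ was arbitrary, every point of $\mathcal{G}$ lies on exactly one line of $S^\ast$, which is precisely the assertion that $S^\ast$ is a line spread.

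I expect no serious obstacle here: the argument is essentially a bookkeeping consequence of the suborbit data, and the only points requiring care are the translation between the normalizer-index condition and membership in $\mathcal{O}_{1a}$, together with the observation that closure under multiplication by $\omega$ forces $\mathcal{O}_{1a}(\omega)$ to be a single pair $\{a, \omega a\}$. As a consistency check one may note that counting incidences then gives $|S^\ast| = 4095/3 = 1365$, matching the number of long root subgroups of $\mathrm{G}_2(4)$ from the remark following the construction theorem, although this count is not itself needed for the proof.
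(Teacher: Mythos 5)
Your argument is correct and is essentially the paper's own proof, which simply cites the suborbit diagram to conclude that every central involution lies in a unique element of $S^\ast$; you have just unpacked that citation by explicitly identifying the $S^\ast$-lines through $\omega$ with $\{\omega\}\cup\mathcal{O}_{1a}(\omega)$ and using $|\mathcal{O}_{1a}|=2$ together with closure under multiplication by $\omega$. No gap.
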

\begin{proof}
By the suborbit diagram, every central involution is contained in a unique element of $S^\ast$, implying that $S^\ast$ is a line spread of $\mathcal{G}$. 
\end{proof}

\medskip \noindent If $x$ is a point of $\mathcal{G}$, then we denote by $L_x$ the unique line through $x$ belonging to $S^\ast$. Clearly, $L_x = \{ x \} \cup \mathcal{O}_{1a}(x)$.

\medskip \noindent Our next aim is to determine all quads of $\mathcal{G}$. 
We prove that all these quads are isomorphic to the generalized quadrangle $W(2)$, which is the unique generalized quadrangle of order $(2,2)$ (cf. \cite[5.2.3]{Pa-Th}). 

\begin{lem} \label{lem4.2}
Let $x$ and $y$ be two points of $\mathcal{G}$ at distance $2$ from each other. Then $x$ and $y$ are contained in a quad if and only if $y \in \mathcal{O}_{2a}(x)$. If this is the case, then the unique quad through $x$ and $y$ is isomorphic to $W(2)$ and contains the line $L_x$.
\end{lem}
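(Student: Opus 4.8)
The plan is to read off the number of common neighbours of $x$ and $y$ from the suborbit diagram of Figure~\ref{fig:sub1} and then feed this into the Shult--Yanushka quad criterion quoted in Section~\ref{sec2}. First I would note that a point $y$ with $\dist(x,y)=2$ lies in $\mathcal{O}_{2a}(x)$ or $\mathcal{O}_{2b}(x)$, and that a common neighbour of $x$ and $y$ is exactly the unique point of $\mathcal{O}_1(x)=\mathcal{O}_{1a}(x)\cup\mathcal{O}_{1b}(x)$ lying on a line through $y$: such a point is the (unique, by (NP2)) point of that line nearest to $x$. Consequently the number of common neighbours of $x$ and $y$ equals the number of lines through $y$ that meet $\mathcal{O}_1(x)$, and distinct such lines give distinct common neighbours since two lines through $y$ meet only in $y$. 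Reading the big-node labels at $\mathcal{O}_{2a}$ and $\mathcal{O}_{2b}$, a point of $\mathcal{O}_{2a}$ has one line to $\mathcal{O}_{1a}$ and two lines to $\mathcal{O}_{1b}$, hence $3$ common neighbours with $x$, whereas a point of $\mathcal{O}_{2b}$ has a single line to $\mathcal{O}_{1b}$ and none to $\mathcal{O}_{1a}$, hence exactly $1$ common neighbour with $x$.

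For the ``if'' direction, suppose $y\in\mathcal{O}_{2a}(x)$. Then $x$ and $y$ admit two common neighbours $c,d$, and since every line of $\mathcal{G}$ carries three points, the Shult--Yanushka criterion \cite{Sh-Ya} applies and produces a unique quad $Q$ through $x$ and $y$. As $Q$ is a full subgeometry whose lines have size $3$, it is a nondegenerate generalized quadrangle of order $(2,t_Q)$. Because $Q$ is convex, every common neighbour of $x$ and $y$ lies on a shortest path from $x$ to $y$ and is therefore contained in $Q$; thus the three common neighbours counted above all lie in $Q$, so that $t_Q+1=3$, i.e.\ $t_Q=2$. Since $W(2)$ is the unique generalized quadrangle of order $(2,2)$, we conclude $Q\cong W(2)$. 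To see that $L_x\subseteq Q$, I would use that one of the three common neighbours lies in $\mathcal{O}_{1a}(x)=L_x\setminus\{x\}$; hence $Q$ contains the two distinct points $x$ and this neighbour of the line $L_x$, and as $Q$ is a subspace we get $L_x\subseteq Q$.

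For the ``only if'' direction, suppose $y\in\mathcal{O}_{2b}(x)$ were contained in a quad $Q$. Inside the generalized quadrangle $Q$ the points $x,y$ are non-collinear, hence have $t_Q+1\geq 2$ common neighbours, all of which lie in $Q\subseteq\mathcal{G}$; this contradicts the count of a single common neighbour established above. Therefore no quad contains a pair $x$, $y$ with $y\in\mathcal{O}_{2b}(x)$, which together with the previous paragraph establishes the stated equivalence.

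The step I expect to be the main obstacle is the passage from the suborbit diagram to the precise common-neighbour counts, namely confirming that the big-node labels at $\mathcal{O}_{2a}$ and $\mathcal{O}_{2b}$ record exactly the lines running back toward $\mathcal{O}_1(x)$ and that these account for \emph{all} common neighbours. Once the two counts $3$ and $1$ are secured, the remainder is a routine application of the standard quad machinery, with the convexity of $Q$ doing the real work of forcing $t_Q=2$ and hence $Q\cong W(2)$.
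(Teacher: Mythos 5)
Your proposal is correct and follows essentially the same route as the paper: read the common-neighbour counts ($3$ for $\mathcal{O}_{2a}$, $1$ for $\mathcal{O}_{2b}$) off the suborbit diagram, invoke the Shult--Yanushka criterion to get existence and uniqueness of the quad, deduce order $(2,2)$ and hence $W(2)$ from the three common neighbours, and obtain $L_x\subseteq Q$ from the common neighbour lying in $\mathcal{O}_{1a}(x)$. The paper's version is just a terser rendering of the same argument, leaving implicit the bookkeeping (lines through $y$ versus common neighbours, convexity forcing all common neighbours into the quad) that you spell out.
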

\begin{proof}
The points $x$ and $y$ are contained in a quad if and only if $x$ and $y$ have at least two common neighbors. By the suborbit diagram, we know that this happens precisely when $y \in \mathcal{O}_{2a}(x)$. If $y \in \mathcal{O}_{2a}(x)$, then $x$ and $y$ have precisely three common neighbors, showing that the unique quad through $x$ and $y$ has order $(2,2)$, necessarily isomorphic to $W(2)$. Moreover, one of the three common neighbors of $x$ and $y$ lies in $\mathcal{O}_{1a}(x)$, showing that $L_x$ is contained in the quad.
\end{proof}

\begin{cor} \label{co4.3}
A line of $S^\ast$ and a quad of $\mathcal{G}$ can never meet in a point. So, the lines of $S^\ast$ contained in a given quad determine a spread of this quad.
\end{cor}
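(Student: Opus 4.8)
The plan is to reduce both assertions to a single structural fact: every quad of $\mathcal{G}$ that contains a point $x$ must contain the entire spread line $L_x$. Once this fact is in hand, both statements of the corollary follow quickly. First I would record the elementary observation that for a line $L \in S^\ast$ and a quad $Q$, if some point $x$ lies in $L \cap Q$ then necessarily $L = L_x$, since $L_x$ is by definition the unique member of $S^\ast$ through $x$.

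To prove the structural fact, let $Q$ be any quad containing $x$. As $Q \cong W(2)$ is a nondegenerate generalized quadrangle, there exists a point $y$ of $Q$ that is not collinear with $x$ inside $Q$. Because $Q$ is convex (equivalently, geodetically closed) in $\mathcal{G}$, distances measured inside $Q$ agree with distances in $\mathcal{G}$, so $\dist(x,y) = 2$ and $Q$ is a quad through the pair $x,y$. By Lemma \ref{lem4.2} this is possible only when $y \in \mathcal{O}_{2a}(x)$, and in that case the \emph{unique} quad through $x$ and $y$ --- which must then be $Q$ --- contains $L_x$. Hence $L_x \subseteq Q$, as desired.

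Granting this, the first assertion is immediate: if a line $L \in S^\ast$ met a quad $Q$ in a single point $x$, then $L = L_x$ and, by the structural fact, $L_x \subseteq Q$; but then $L$ would meet $Q$ in all three of its points rather than in exactly one, a contradiction. Thus a line of $S^\ast$ is either wholly contained in $Q$ or disjoint from it. For the second assertion I would invoke Lemma \ref{lem4.1}: since $S^\ast$ is a line spread of $\mathcal{G}$, every point $z$ of $Q$ lies on a unique line $L_z \in S^\ast$, and by the structural fact $L_z \subseteq Q$. These lines cover all points of $Q$ and are pairwise disjoint (being members of the spread $S^\ast$), so they form a spread of $Q$.

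I do not anticipate a serious obstacle here. The one point requiring care is the appeal to convexity of quads, which is what guarantees that a pair of non-collinear points of $Q$ is genuinely at distance $2$ in $\mathcal{G}$ so that Lemma \ref{lem4.2} applies, combined with the uniqueness clause of that lemma to identify the asserted quad through $x$ and $y$ with $Q$ itself.
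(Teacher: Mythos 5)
Your proof is correct and follows exactly the route the paper intends: the corollary is stated without proof as an immediate consequence of Lemma \ref{lem4.2}, namely that any quad through a point $x$ contains some $y$ at distance $2$ from $x$ (convexity of quads making the in-quad and ambient distances agree) and hence, being the unique quad through $x$ and $y$, contains $L_x$. Your write-up just makes this implicit deduction explicit.
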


\begin{lem} \label{lem4.4}
Let $x$ be a point of $\mathcal{G}$ and $M$ a line through $x$ distinct from $L_x$. Then $M$ is contained in a unique quad. This quad contains the line $L_x$. 
\end{lem}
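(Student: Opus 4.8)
The plan is to prove existence and uniqueness simultaneously by a counting argument that determines \emph{all} quads through the fixed point $x$. Since any quad containing $M$ necessarily contains $x$ (as $x \in M$), it suffices to analyse the quads through $x$. By Lemma~\ref{lem4.2} every such quad is isomorphic to $W(2)$ and already contains $L_x$; consequently the second assertion of the lemma (that the quad contains $L_x$) will be automatic once I know $M$ lies in a quad through $x$. Thus the entire statement reduces to showing that each non-spread line through $x$ lies in exactly one quad.

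First I would count the quads through $x$. If $Q$ is such a quad, then, being isomorphic to $W(2)$ (order $(2,2)$, so $15$ points), it contains exactly $15-1-6 = 8$ points at distance $2$ from $x$. By the proof of Lemma~\ref{lem4.2} each of these lies in $\mathcal{O}_{2a}(x)$, and conversely each point of $\mathcal{O}_{2a}(x)$ determines, together with $x$, a \emph{unique} quad through $x$. In particular two distinct quads through $x$ cannot share a point of $\mathcal{O}_{2a}(x)$, again by the uniqueness in Lemma~\ref{lem4.2}. Since $|\mathcal{O}_{2a}(x)| = 40 = 5 \cdot 8$, there are exactly five quads through $x$, and they partition $\mathcal{O}_{2a}(x)$ into five blocks of eight.

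Next I would count lines. In a quad $Q \cong W(2)$ through $x$ there are exactly three lines of $Q$ through $x$; by Corollary~\ref{co4.3} the lines of $S^\ast$ contained in $Q$ form a spread, so exactly one of these three is an $S^\ast$-line, namely $L_x$, and the other two are non-spread lines through $x$. Hence the five quads through $x$ account for $5 \cdot 2 = 10$ incidences of the form (non-spread line through $x$, quad through $x$ containing it). On the other hand, since $\mathcal{G}$ has order $(2,10)$, there are precisely $11 - 1 = 10$ non-spread lines through $x$. Provided no non-spread line lies in two distinct quads, these ten incidences must distribute as exactly one quad per non-spread line, yielding existence and uniqueness at a single stroke.

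The main obstacle is therefore to rule out a non-spread line $M$ lying in two distinct quads $Q_1 \neq Q_2$ through $x$. For this I would invoke the standard fact that two distinct quads of a near polygon meet in at most a point or a line: both $Q_1$ and $Q_2$ contain $L_x$ and $M$, so their (convex) intersection $Q_1 \cap Q_2$ would contain two distinct lines through $x$, which is impossible unless $Q_1 = Q_2$. Concretely, $Q_1 \cap Q_2$ is a convex subspace contained in $Q_1 \cong W(2)$, and $W(2)$ admits no proper convex subspace of diameter $2$ (two non-collinear points of $W(2)$ have three common neighbours, one more than in any proper subgrid); hence an intersection containing two concurrent lines must be all of $Q_1$, forcing $Q_1 = Q_2$. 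With this the count closes: each of the ten non-spread lines through $x$, and in particular $M$, lies in exactly one quad, and that quad, being a quad through $x$, contains $L_x$.
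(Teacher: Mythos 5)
Your proof is correct, but the existence half runs along a genuinely different track from the paper's. The paper handles uniqueness exactly as you do (every quad through $M$ contains $L_x$ by Corollary \ref{co4.3}, and two intersecting lines lie in at most one quad -- the standard Shult--Yanushka fact quoted in the preliminaries), but for existence it argues locally and constructively: it picks $x' \in M \setminus \{x\}$, notes from the suborbit diagram that $x'$ has a neighbour $y \in \mathcal{O}_{2a}(x)$, and observes that the unique quad through $x$ and $y$ given by Lemma \ref{lem4.2} contains the common neighbour $x'$ and hence $M$. You instead run a global double count: the $40$ points of $\mathcal{O}_{2a}(x)$ are partitioned by the quads through $x$ into blocks of $8$, giving exactly five quads through $x$, each contributing two non-spread lines through $x$, and these $10$ incidences must match the $10$ non-spread lines once multiple incidence is excluded. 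Both arguments are sound; yours has the mild advantage of establishing Corollary \ref{co4.5} (five quads per point) \emph{before} rather than after Lemma \ref{lem4.4}, so the paper's subsequent deduction of that corollary becomes redundant, while the paper's construction is more local and does not rely on knowing $|\mathcal{O}_{2a}(x)| = 40$ exactly -- only on the existence of an edge from $\mathcal{O}_{1b}$ to $\mathcal{O}_{2a}$ in the suborbit diagram. One small point of presentation: your claim that the eight distance-$2$ points of a quad through $x$ lie in $\mathcal{O}_{2a}(x)$ is the statement of Lemma \ref{lem4.2} itself (via convexity of quads), not merely something buried in its proof, so you can cite it directly.
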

\begin{proof}
Note first that there is at most one quad through $M$. 
Indeed, by Corollary \ref{co4.3}, every quad through $M$ must also contain the line $L_x$, and we know that there is at most one quad through two distinct intersecting lines in a near polygon. 

In the suborbit diagram (see Figure~\ref{fig:sub1}) let $\mO_0$ be $\{x\}$. Let $x'$ be a point in $M \setminus x$. Then $x'$ lies in the suborbit $\mO_{1b}$, and hence it has two lines through it that meet the suborbit $\mO_{2a}$. Let $y$ be a point in $\mO_{2a}$ that is collinear with $x'$. By Lemma \ref{lem4.2}, there is a unique quad containing $x$, $y$ and the line $L_x$. As $x'$ is a common neighbor of $x$ and $y$, the point $x'$ and the line $M$ are also contained in this quad.
\end{proof}

\begin{cor} \label{co4.5}
Every point $x$ of $\mathcal{G}$ is contained in five quads. Each of these five quads contains the line $L_x$.
\end{cor}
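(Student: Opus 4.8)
The plan is to fix a point $x$ of $\mathcal{G}$ and count the quads through $x$ by a double-counting argument on the lines through $x$. Since $\mathcal{G}$ has order $(2,10)$, there are exactly $11$ lines through $x$. Exactly one of these, the spread line $L_x \in S^\ast$, belongs to $S^\ast$; the remaining $10$ lines $M$ are all distinct from $L_x$. The idea is to match these $10$ lines against the quads through $x$, showing that each quad swallows exactly two of them.

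First I would apply Lemma \ref{lem4.4} to each of the $10$ lines $M \neq L_x$ through $x$: each such $M$ lies in a unique quad $Q_M$, and every $Q_M$ contains $L_x$. Since a quad through $x$ necessarily meets the pencil of lines through $x$ in more than just $L_x$ (see below), this already yields the second assertion of the corollary, namely that \emph{every} quad through $x$ contains $L_x$. Next I would pin down how many of the $10$ lines a single quad accounts for. A quad $Q$ through $x$ is isomorphic to $W(2)$ by Lemma \ref{lem4.2}, hence has order $(2,2)$, so precisely three lines of $Q$ pass through $x$. By Corollary \ref{co4.3} the lines of $S^\ast$ lying in $Q$ form a spread of $Q$, so $L_x$ is the only line of $S^\ast$ through $x$ inside $Q$; consequently the other two lines of $Q$ through $x$ are among the $10$ lines $M \neq L_x$. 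Thus each quad through $x$ contributes exactly two of the ten lines.

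Combining the two counts finishes the proof. Each of the $10$ lines $M \neq L_x$ determines a unique quad $Q_M$ through $x$ (uniqueness from Lemma \ref{lem4.4}), and conversely each quad through $x$ arises as $Q_M$ for exactly two choices of $M$. Hence the number of quads through $x$ equals $10/2 = 5$.

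I do not expect a genuine obstacle here; the statement is essentially a pigeonhole consequence once Lemmas \ref{lem4.2} and \ref{lem4.4} and Corollary \ref{co4.3} are available. The only point requiring care is keeping the bookkeeping tight: I must use the uniqueness in Lemma \ref{lem4.4} to guarantee that two distinct quads through $x$ never share one of the $10$ lines, and I must use the $W(2)$ identification together with the spread property of Corollary \ref{co4.3} to be certain that each quad through $x$ meets the pencil through $x$ in exactly two non-spread lines (rather than one or three). With these checks the $10/2 = 5$ count is exact.
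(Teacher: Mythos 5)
Your argument is correct and is essentially the paper's own proof: both count the ten non-spread lines through $x$, invoke Lemma \ref{lem4.4} to place each in a unique quad containing $L_x$, and observe via the $W(2)$ structure and Corollary \ref{co4.3} that each such quad absorbs exactly two of them, giving $10/2=5$. Your write-up merely spells out the bookkeeping that the paper leaves implicit.
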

\begin{proof}
As $\mG$ is of order $(2, 10)$ there are $10$ lines through $x$ distinct from $L_x$, and each of these lines is contained in a unique quad (necessarily containing $L_x$). This gives rise to 10 quads through $x$, but each of them is counted twice as each of them contains two lines through $x$ distinct from $L_x$.
\end{proof}

\begin{cor} \label{co4.6}
Two central involutions of $G$ are contained in a quad if and only if they commute.
\end{cor}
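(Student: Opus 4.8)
The plan is to translate the condition ``contained in a quad'' into a condition on suborbits and to compare it with the characterization of commutativity recorded in Section \ref{sec3}. Fix a point $x \in \mP$ and set $\omega = x$, so that all suborbits are taken with respect to $x$. From the description of the $62$ central involutions commuting with $\omega$ given in Section \ref{sec3} (they lie exactly in $\mathcal{O}_{1a}$, $\mathcal{O}_{1b}$ and $\mathcal{O}_{2a}$), a central involution $y \neq x$ commutes with $x$ precisely when $y \in \mathcal{O}_{1a}(x) \cup \mathcal{O}_{1b}(x) \cup \mathcal{O}_{2a}(x)$. It therefore suffices to show that $x$ and $y$ lie in a common quad for exactly these same $y$, and the argument is a case analysis on $\dist(x,y)$.

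First I would dispose of the large-distance case using the fact that a quad has diameter $2$, so any two of its points are at distance at most $2$ in $\mathcal{G}$. Hence if $y \in \mathcal{O}_{3a}(x) \cup \mathcal{O}_{3b}(x) \cup \mathcal{O}_4(x)$, then $\dist(x,y) \geq 3$ and $x,y$ cannot lie in a common quad; since these suborbits contain no involutions commuting with $x$, both sides of the claimed equivalence fail simultaneously. For $\dist(x,y) = 2$, Lemma \ref{lem4.2} already does the work: such $x$ and $y$ lie in a (unique) quad if and only if $y \in \mathcal{O}_{2a}(x)$, which is exactly the distance-$2$ commuting suborbit.

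The remaining case is $\dist(x,y) = 1$, i.e. $y \in \mathcal{O}_{1a}(x) \cup \mathcal{O}_{1b}(x)$, where $x$ and $y$ always commute, so I must verify that two collinear points always lie in a common quad. If $y \in \mathcal{O}_{1a}(x)$, then $y$ lies on $L_x = \{x\} \cup \mathcal{O}_{1a}(x)$, and by Corollary \ref{co4.5} each of the five quads through $x$ contains $L_x$ and hence $y$. If instead $y \in \mathcal{O}_{1b}(x)$, then the line $xy$ is distinct from $L_x$, so Lemma \ref{lem4.4} furnishes a quad through $xy$, which contains both $x$ and $y$. Collecting the three cases yields the equivalence.

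I do not expect a serious obstacle: the corollary is essentially a bookkeeping consequence of Lemmas \ref{lem4.2} and \ref{lem4.4} together with the suborbit characterization of commutativity. The only point requiring a little care is the collinear case, where one must remember (Corollary \ref{co4.3}) that lines of $S^\ast$ are \emph{contained} in quads rather than excluded from them, so that the points of $\mathcal{O}_{1a}(x)$, lying on the spread line $L_x$, genuinely occur inside quads.
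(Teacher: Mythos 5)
Your proof is correct and follows essentially the same route as the paper: reduce commutativity to membership in $\mathcal{O}_0(x) \cup \mathcal{O}_{1a}(x) \cup \mathcal{O}_{1b}(x) \cup \mathcal{O}_{2a}(x)$ and then invoke Lemma \ref{lem4.2} for the distance-$2$ case and Lemma \ref{lem4.4} for the collinear case. The paper's proof is just a terser version of your case analysis, so no further comment is needed.
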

\begin{proof}
If $x$ is a central involution, then the central involutions which commute with $x$ are those contained in $\mO_0(x) \cup \mO_{1a}(x) \cup \mO_{1b}(x) \cup \mO_{2a}(x)$. The claim then follows from Lemmas \ref{lem4.2} and \ref{lem4.4}.
\end{proof}

\begin{lem} \label{lem4.7}
If $Q$ is a quad and $L$ a line of the spread $S^\ast$ containing a point at distance $i$ from $Q$, then $i \in \{ 0,1,2 \}$ and $L$ is completely contained in $\Gamma_i(Q)$. 
\end{lem}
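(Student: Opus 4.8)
The plan is to prove two things separately: that all three points of $L$ lie at the \emph{same} distance from $Q$, and that this common distance is at most $2$. The equal-distance statement I would obtain from a general ``parallelism'' property of the spread $S^\ast$, which is completely elementary and uses only Lemma~\ref{lem4.4} and Corollary~\ref{co4.5}; the bound $i\le 2$ is where the specific structure of $\mathcal{G}$ recorded in Figure~\ref{fig:sub1} really enters, and this is the main obstacle. For the trivial reduction, note that by Corollary~\ref{co4.3} a spread line is either disjoint from $Q$ or entirely contained in $Q$; in the latter case $i=0$ and there is nothing to prove, so I assume $L\cap Q=\emptyset$, whence $\dist(p,Q)\ge 1$ for every $p\in L$.

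The key lemma I would isolate is: for any two lines $L,M\in S^\ast$ and any point $p\in L$ one has $\dist(p,M)=\dist(L,M)$, i.e.\ the lines of $S^\ast$ are pairwise ``parallel''. I would prove this by induction on $k=\dist(L,M)$. For $k=1$, pick collinear $p_0\in L$ and $m_0\in M$; the line $p_0m_0$ is distinct from $L_{p_0}=L$, so by Lemma~\ref{lem4.4} it lies in a unique quad $R$ with $R\supseteq L$, and since $R$ is a quad through $m_0$, Corollary~\ref{co4.5} gives $M=L_{m_0}\subseteq R$. Inside $R\cong W(2)$ the lines $L$ and $M$ are disjoint lines of a generalized quadrangle, so every point of $L$ is collinear with a point of $M$; as $R$ is convex this yields $\dist(p,M)=1$ for all $p\in L$. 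For $k\ge 2$, take $m_0\in M$, $p_0\in L$ with $\dist(p_0,m_0)=k$ and a neighbour $s$ of $m_0$ on a geodesic to $p_0$; then $s\notin M\cup L$, so $\dist(L_s,M)=1$ and by the base case all of $L_s$ lies at distance $1$ from $M$. A short triangle-inequality argument gives $\dist(L,L_s)=k-1$, and the induction hypothesis applied to $L,L_s$ forces $\dist(p,M)=k$ for every $p\in L$.

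Granting parallelism, the equal-distance statement is immediate. By Corollary~\ref{co4.3} the spread lines contained in $Q$ form a spread $\{M_1,\dots,M_5\}$ of $Q$, and for each $j$ every point of $L$ is at the constant distance $d_j=\dist(L,M_j)$ from $M_j$. Hence for every $p\in L$,
\[
\dist(p,Q)=\min_j \dist(p,M_j)=\min_j d_j=:i,
\]
which is independent of $p$, so $L\subseteq\Gamma_i(Q)$.

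It remains to show $i\le 2$, and this is the crux. Since every line of $\mathcal{G}$ has three points, each point--quad pair is classical or ovoidal; a classical pair at distance $j$ reaches distance $j+2$ inside $Q$ and an ovoidal one distance $j+1$, so the diameter~$4$ of $\mathcal{G}$ already gives $\dist(p,Q)\le 3$. To exclude $i=3$ I would show it is incompatible with Figure~\ref{fig:sub1}. If $i=3$, then each pair $(p,Q)$ with $p\in L$ is ovoidal, its five nearest points forming an ovoid $O_p\subseteq\Gamma_3(p)\cap Q$ while the other ten points of $Q$ lie in $\Gamma_4(p)$; running over the three points of $L$ one checks that the $O_p$ partition $Q$ and that every $M_j$ is at distance exactly $3$ from $L$. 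The hard part is to rule this out, i.e.\ to show that \emph{no} central involution is at distance $3$ from a quad, since this cannot follow from general near-polygon or generalized-quadrangle theory ($W(2)$ does admit partitions into three ovoids). I expect to settle it from the suborbit diagram: fixing $q\in Q$ and using the line data at the far suborbits $\mathcal{O}_{3a}(q),\mathcal{O}_{3b}(q),\mathcal{O}_4(q)$ to force each such point within distance $2$ of another point of $Q$, so that $\Gamma_3(Q)=\varnothing$. A convenient bookkeeping device here is that, by the equal-distance part already proved, every $\Gamma_i(Q)$ is a union of complete spread lines, so $|\Gamma_i(Q)|$ is divisible by $3$; combining this with the counts of points at distances $0,1,2$ read off from the diagram should pin down $\Gamma_3(Q)=\varnothing$. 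This exclusion of distance~$3$, which genuinely uses the geometry of $\mathcal{G}$, is the step I expect to require the most care.
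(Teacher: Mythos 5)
Your ``parallelism'' lemma and the resulting reduction to a single common distance $i$ are correct; they amount to an independent derivation of (a special case of) what the paper later establishes as Lemma~\ref{lem4.12}, whereas the paper's own proof of Lemma~\ref{lem4.7} handles the cases $i=0,1,2$ directly by pushing $L$ through quads along a geodesic. That part of your proposal is a perfectly good alternative. The problem is that the crux of the statement --- excluding $i=3$ --- is not actually proved: you defer it to an unexecuted counting argument with the suborbit diagram (``I expect to settle it\dots'', ``should pin down''), so the lemma is left open exactly where you yourself identify the difficulty.

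Moreover, the reason you give for abandoning the natural route is factually wrong. You assert that $W(2)$ admits partitions into three ovoids, and conclude that no contradiction can be extracted from the ovoidal configuration by pure generalized-quadrangle theory. In fact $W(2)$ has exactly six ovoids and any two of them meet in exactly one point (in the duads/synthemes model the ovoids are the six stars $O_i=\{\{i,j\}\colon j\neq i\}$, with $O_i\cap O_j=\{\{i,j\}\}$), so $W(2)$ has \emph{no} partition into ovoids. This is precisely how the paper closes the case $i\geq 3$: there $L\subseteq\Gamma_3(Q)$, each point $p$ of $L$ is ovoidal with respect to $Q$ with ovoid $O_p=\Gamma_3(p)\cap Q$, and (NP2) applied to the line $L$ and a point $y$ of $Q$ shows that $y$ lies in exactly one of the three sets $O_p$; hence the $O_p$ would partition $Q\cong W(2)$ into three ovoids, which is impossible. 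So the step you flag as requiring the specific structure of $\mathcal{G}$ in fact follows from the general theory, by the very argument you dismissed; as written, your proposal contains a genuine gap at this point.
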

\begin{proof}
We may suppose that $i = \dist(L,Q)$. Let $x \in L$ and $y \in Q$ such that $\dist(x,y)=i$. By Corollary \ref{co4.3}, $L_y$ is contained in $Q$. 

If $i=0$, then $L$ is completely contained in $Q$ by Corollary \ref{co4.3}. So, we may suppose that $i \geq 1$.

Suppose $i=1$. Then $L_x = L$ is disjoint from $Q$ by Corollary \ref{co4.3}.  By Lemma \ref{lem4.4}, there exists a unique quad through $xy$. Since this quad contains the lines $L_x$ and $L_y$, every point of $L_x$ is collinear with a unique point of $L_y$ (which belongs to $Q$). 

Suppose $i=2$. Let $z \in \G_1(Q)$ be a common neighbor of $x$ and $y$. The unique quad through $L_z \subseteq \Gamma_1(Q)$ and $zx$ contains the line $L_x$, showing that every point of $L_x$ is collinear with a point of $L_z \subseteq \Gamma_1(Q)$, which implies that every point of $L_x$ has distance at most and hence precisely 2 from $Q$.

Suppose $i \geq 3$. Since the distance from a point to $Q$ is at most 3, we must have that $L \subseteq \Gamma_3(Q)$. Every point of $L$ must then be ovoidal with respect to $Q$. By (NP2), every point of $Q$ lies at distance 3 from a unique point of $L$, showing that the three ovoids of $Q$ determined by the points of $L$ form a partition of $Q$. This is however impossible, as the generalized quadrangle $W(2)$ has no partition in ovoids.
\end{proof}

\begin{lem} \label{lem4.8}
Every point-quad pair in $\mathcal{G}$ is classical. 
\end{lem}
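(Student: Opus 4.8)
My plan is to lean entirely on the classical--ovoidal dichotomy recalled in the preliminaries: since every line of $\mathcal{G}$ has three points, \cite[Prop.~2.6]{Sh-Ya} guarantees that each point-quad pair is either classical or ovoidal, so it suffices to rule out the ovoidal case. Assume for contradiction that $(x,Q)$ is ovoidal and set $i := \dist(x,Q)$. The case $x \in Q$ is trivially classical, so $i \geq 1$; the points of $Q$ nearest to $x$ form an ovoid $O_x$ of $Q \cong W(2)$ (five points at distance $i$), and every point of $Q$ outside $O_x$ lies at distance exactly $i+1$ from $x$.

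The device I would use to reach a contradiction is the line spread $S^\ast$. Let $L_x = \{x,x_1,x_2\}$ be the unique spread line through $x$. By Lemma \ref{lem4.7} we have $\dist(L_x,Q) = i$ and $L_x \subseteq \Gamma_i(Q)$, so $x_1$ and $x_2$ also lie at distance $i$ from $Q$ and $L_x \cap Q = \emptyset$. For $p \in O_x$ one has $\dist(p,x) = i = \dist(p,Q) \leq \dist(p,L_x)$, while $\dist(p,L_x) \geq \dist(Q,L_x) = i$ forces $\dist(p,L_x) = i$, attained at $x$; by (NP2) the point $x$ is then the unique point of $L_x$ nearest to $p$, whence $\dist(x_1,p) = \dist(x_2,p) = i+1$. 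Running the same argument with the roles of $x,x_1,x_2$ permuted, I would conclude that the three nearest-point sets $O_x, O_{x_1}, O_{x_2}$ of $x,x_1,x_2$ in $Q$ are pairwise disjoint.

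To finish, I would examine the pairs $(x_1,Q)$ and $(x_2,Q)$, each again classical or ovoidal. If $(x_1,Q)$ were classical with gateway $g$, then every $p \in O_x$ would satisfy $i+1 = \dist(x_1,p) = i + \dist(g,p)$, so $\dist(g,p) = 1$; thus all five points of the ovoid $O_x$ would be collinear with $g$ (and $g \notin O_x$, since $g$ sits at distance $i$ from $x_1$ whereas $O_x \subseteq \Gamma_{i+1}(x_1)$). This is impossible, because in $W(2)$ an external point lies on exactly three lines, each meeting an ovoid once, hence is collinear with exactly three of its points. Therefore $(x_1,Q)$ and $(x_2,Q)$ must both be ovoidal, so $O_{x_1}$ and $O_{x_2}$ are ovoids too; but then $O_x \sqcup O_{x_1} \sqcup O_{x_2}$ is a partition of the $15$ points of $Q$ into three ovoids, which $W(2)$ does not admit --- exactly the obstruction already used in the proof of Lemma \ref{lem4.7}.

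The step I expect to be the crux is the elimination of the ovoidal case, and the essential leverage is Lemma \ref{lem4.7}: it pins all three points of $L_x$ to the common distance $i$ from $Q$, which is what makes their nearest-point sets disjoint and lets the two combinatorial facts about $W(2)$ (no external point is collinear with five points of an ovoid, and there is no ovoid partition) close the argument. Once ovoidal pairs are excluded, classicality of every point-quad pair is immediate.
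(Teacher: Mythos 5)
Your proof is correct, but it takes a genuinely different route from the paper's. The paper argues globally by counting: using Lemma \ref{lem4.7} only to get $\dist(x,Q)\leq 2$ for all $x$, it computes $|\Gamma_1(Q)|=240$, determines that each point of $\Gamma_1(Q)$ lies on one line meeting $Q$, two lines inside $\Gamma_1(Q)$ and eight lines going to $\Gamma_2(Q)$, and then the equality $240\cdot 8\cdot 2=3840=|\Gamma_2(Q)|$ forces every point of $\Gamma_2(Q)$ to have a unique neighbour in $\Gamma_1(Q)$, whence classicality. You instead argue locally: assuming $(x,Q)$ ovoidal, you invoke the full strength of Lemma \ref{lem4.7} (all three points of the spread line $L_x$ lie in the same $\Gamma_i(Q)$), deduce via (NP2) that the nearest-point sets of $x,x_1,x_2$ in $Q$ are pairwise disjoint, rule out a classical companion on $L_x$ because an external point of $W(2)$ is collinear with only three points of an ovoid, and conclude with the nonexistence of an ovoid partition of $W(2)$ --- essentially transplanting to the point $x$ the trick the paper already uses for the case $i\geq 3$ in the proof of Lemma \ref{lem4.7}. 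Your argument has the merit of being purely local (it never needs the bound $\dist(x,Q)\leq 2$ nor any global point counts), while the paper's counting argument yields as a by-product the exact sizes $|\Gamma_1(Q)|$ and $|\Gamma_2(Q)|$ and the distribution of lines around a point of $\Gamma_1(Q)$. The one wrinkle to polish is the chain ``$\dist(p,x)=i=\dist(p,Q)\leq\dist(p,L_x)$'': of course $\dist(p,Q)=0$ for $p\in Q$; what you mean (and what your argument actually uses) is that every point of $L_x$ lies at distance $i$ from $Q$ and hence at distance at least $i$ from $p$, so that $\dist(p,L_x)=i$ is attained only at $x$.
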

\begin{proof}
Let $Q$ be a quad of $\mathcal{G}$. By Lemma \ref{lem4.7}, every point has distance at most $2$ from $Q$. Every point at distance at most $1$ from a quad of a general near polygon is classical with respect to that quad. In particular, every point of $Q \cup \G_1(Q)$ is classical with respect to $Q$. Every point of $\G_1(Q)$ is collinear with a unique point of $Q$, implying that $|\G_1(Q)| = |Q| \cdot (11-3) \cdot 2 = 240$ and $|\G_2(Q)| = 4095 - |\G_1(Q)| - |Q| = 3840$.

Let $x$ be a point in $\G_1(Q)$ and $x'$ the unique neighbour of $x$ in $Q$. If $L$ is a line through $x$ contained in $\Gamma_1(Q)$ then the points of $Q$ collinear with a point of $L$ form a line $L'$. The lines $L$ and $L'$ are contained in a quad, which necessarily coincides with the unique quad $Q'$ through the line $xx'$. So, the line $L' = Q \cap Q'$ should coincide with $L_{x'}$.

So, through $x$ there is a unique line meeting $Q$, two lines contained in $\Gamma_1(Q)$ and eight lines meeting $\Gamma_2(Q)$ (necessarily in two points). Since $|\G_1(Q)| \cdot 8 \cdot 2 = 3840 = |\G_2(Q)|$, we must have that every point of $\G_2(Q)$ is collinear with a unique point of $\G_1(Q)$ and hence at distance $2$ from a unique point in $Q$.  
All point-quad pairs must therefore be classical.
\end{proof}

\bigskip \noindent The relation defined on pairs $(x,y) \in \mathcal{P} \times \mathcal{P}$ by the condition $y \in \mathcal{O}_{2a}(x)$ is symmetric. Indeed, $y \in \mathcal{O}_{2a}(x)$ if and only if $\dist(x,y)=2$ and $x,y$ commute (regarded as involutions). (The fact that the relation is symmetric also follows from Lemma \ref{lem4.2}.) The following lemma shows that also the relation defined on pairs $(x,y) \in \mathcal{P} \times \mathcal{P}$ by the condition $y \in \mathcal{O}_{3a}(x)$ is symmetric.

\begin{lem} \label{lem4.9}
Let $x$ and $y$ be two points of $\mathcal{G}$ at distance $3$ from each other. Then the following are equivalent:
\begin{enumerate}
\item[$(1)$] $y \in \mathcal{O}_{3a}(x)$;
\item[$(2)$] $x \in \mathcal{O}_{3a}(y)$;
\item[$(3)$] there is a quad through $x$ meeting a line through $y$;
\item[$(4)$] there is a quad through $y$ meeting a line through $x$.
\end{enumerate}
\end{lem}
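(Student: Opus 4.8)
The plan is to establish the two equivalences $(1)\Leftrightarrow(3)$ and $(1)\Leftrightarrow(2)$; then $(2)\Leftrightarrow(4)$ follows from $(1)\Leftrightarrow(3)$ by interchanging the roles of $x$ and $y$, and all four statements become equivalent.

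First I would reformulate $(3)$. Since $\dist(x,y)=3$, any quad $Q$ through $x$ consists of points at distance at most $2$ from $x$, so $y\notin Q$; hence a quad through $x$ meets a line through $y$ if and only if $y$ is collinear with a point of $Q$, that is, if and only if $y\in\Gamma_1(Q)$ for some quad $Q$ through $x$. I would then define $T(x)$ to be the set of points $y\in\Gamma_3(x)$ for which such a quad exists. This set is invariant under the stabilizer $G_x$, because $G_x$ permutes the five quads through $x$ (Corollary \ref{co4.5}) and preserves distances; thus $T(x)$ is a union of suborbits, and since $\Gamma_3(x)=\mathcal{O}_{3a}(x)\cup\mathcal{O}_{3b}(x)$, it is a union of $\mathcal{O}_{3a}(x)$ and/or $\mathcal{O}_{3b}(x)$.

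The key computation is to count $|\Gamma_1(Q)\cap\Gamma_3(x)|$ for a fixed quad $Q$ through $x$. Using that every point--quad pair is classical (Lemma \ref{lem4.8}), each point $w\in\Gamma_1(Q)$ has a unique nearest point $p=\pi_Q(w)$ with $\dist(w,q)=\dist(w,p)+\dist(p,q)$ for all $q\in Q$; taking $q=x$ gives $\dist(x,w)=\dist(x,p)+1$, so $w\in\Gamma_3(x)$ exactly when $p$ lies at distance $2$ from $x$ in $Q$. Inside $Q\cong W(2)$ there are $15-1-6=8$ such points $p$, and through each $p$ there are $11-3=8$ lines not contained in $Q$, each contributing two points of $\Gamma_1(Q)$ with nearest point $p$; this gives $8\cdot 8\cdot 2=128$ points, all distinct since nearest points are unique. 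Therefore $|T(x)|\le 5\cdot 128=640$. As $T(x)$ is a nonempty union of the suborbits $\mathcal{O}_{3a}(x)$ (size $640$) and $\mathcal{O}_{3b}(x)$ (size $1024$) with $|T(x)|\le 640<1024$, it must equal $\mathcal{O}_{3a}(x)$. This proves $(1)\Leftrightarrow(3)$, and incidentally shows that the five quads meet $\Gamma_3(x)$ in pairwise disjoint sets.

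Finally, for the symmetry $(1)\Leftrightarrow(2)$ I would pass to orbitals: the suborbit $\mathcal{O}_{3a}$ corresponds to a $G$-orbit on ordered pairs, and its transpose, obtained by reversing pairs, is again such an orbit, of the same cardinality $|\mathcal{P}|\cdot 640$ and still consisting of pairs at distance $3$. The only distance-$3$ suborbit of size $640$ is $\mathcal{O}_{3a}$ itself, so $\mathcal{O}_{3a}$ is self-paired, i.e. $y\in\mathcal{O}_{3a}(x)\Leftrightarrow x\in\mathcal{O}_{3a}(y)$. I expect the main obstacle to be the bookkeeping in the count $|\Gamma_1(Q)\cap\Gamma_3(x)|=128$ and the careful use of classicality to guarantee $\dist(x,w)=\dist(x,p)+1$; the symmetry argument, by contrast, rests only on the fact that the two distance-$3$ suborbits have different sizes $640\ne 1024$.
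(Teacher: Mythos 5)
Your proof is correct, but it takes a genuinely different route from the paper's on both halves of the argument. The paper obtains $(1)\Leftrightarrow(3)$ and $(2)\Leftrightarrow(4)$ by reading the suborbit diagram directly: $\mathcal{O}_{3a}(x)$ is joined to $\mathcal{O}_{2a}(x)$ by an edge while $\mathcal{O}_{3b}(x)$ is not, and Lemma \ref{lem4.2} identifies $\mathcal{O}_{2a}(x)$ with the points lying in a common quad with $x$; it then bridges the two pairs by a short explicit construction showing $(3)\Rightarrow(4)$ (the spread line $L_z$ inside the quad $Q$ through $x$, the point $u\in L_z$ collinear with $x$, and the quad through $zy$ given by Lemma \ref{lem4.4}). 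You instead avoid the fine edge data of the diagram entirely: your count $|\Gamma_1(Q)\cap\Gamma_3(x)|=8\cdot 8\cdot 2=128$ is correct (classicality from Lemma \ref{lem4.8} gives $\dist(x,w)=\dist(x,p)+1$, and uniqueness of nearest points makes the $128$ points distinct), and combined with Corollary \ref{co4.5} and the suborbit sizes $640<1024$ it forces $T(x)=\mathcal{O}_{3a}(x)$; your bridge is the self-pairedness of the $\mathcal{O}_{3a}$ orbital, again by a size comparison. What your approach buys is robustness (only the suborbit sizes and the distance partition $\Gamma_3(x)=\mathcal{O}_{3a}(x)\cup\mathcal{O}_{3b}(x)$ are needed, not which suborbits are joined to which) plus the bonus fact that the five sets $\Gamma_1(Q)\cap\Gamma_3(x)$ partition $\mathcal{O}_{3a}(x)$; what the paper's approach buys is brevity and an explicit geometric witness for $(4)$ given $(3)$. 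Two small points worth making explicit if you write this up: nonemptiness of $T(x)$ (immediate from $128>0$), and the fact that $G_x$ acts on $\mathcal{G}$ by automorphisms fixing $x$, which the paper only records formally later (Lemma \ref{lem4.29}) but uses implicitly from the start.
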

\begin{proof}
By the suborbit diagram and Lemma \ref{lem4.2}, (1) and (3) are equivalent, as well as (2) and (4). So, by symmetry it suffices to show that (3) implies (4). Suppose $Q$ is a quad through $x$ and $L$ is a line through $y$ meeting $Q$ in a point $z$. The line $L_z$ is contained in $Q$ and contains a point $u$ collinear with $x$. The unique quad through $L=zy$ contains the line $L_z$ and meets the line $xu$ through $x$.  
\end{proof}

\begin{lem} \label{lem4.10}
One of the following cases occurs for a line $L \in S^\ast$:
\begin{enumerate}
\item[$(1)$] $L$ meets the suborbits $\mathcal{O}_0$ and $\mathcal{O}_{1a}$;
\item[$(2)$] $L$ meets the suborbits $\mathcal{O}_{1b}$ and $\mathcal{O}_{2a}$;
\item[$(3)$] $L$ meets the suborbits $\mathcal{O}_{2b}$ and $\mathcal{O}_{3a}$;
\item[$(4)$] $L$ meets the suborbits $\mathcal{O}_{3b}$ and $\mathcal{O}_4$.
\end{enumerate}
\end{lem}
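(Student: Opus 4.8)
\section*{Proof proposal for Lemma \ref{lem4.10}}

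The plan is to fix the base point $\omega$ (so that $\mathcal{O}_0=\{\omega\}$ and all suborbits are taken with respect to $\omega$) and to classify a line $L\in S^\ast$ by the single integer $d=\dist(\omega,L)$. Since $\mathcal{G}$ is a near octagon, axiom (NP2) gives a unique point $p=\pi_L(\omega)$ of $L$ at distance $d$ from $\omega$; the remaining two points of $L$ then lie at distance exactly $d+1$ (they are at most $d+1$ by the triangle inequality along the line, and at least $d+1$ by uniqueness of the nearest point, using that distinct spread lines are disjoint). As the diameter is $4$ this forces $d\in\{0,1,2,3\}$, and $L$ meets precisely the two suborbits at distances $d$ and $d+1$, in one and two points respectively. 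Everything then reduces to the $a/b$ refinement: showing that the nearest point $p$ lies in the ``$b$''-suborbit $\mathcal{O}_{db}$ (for $d\geq1$) and that the two far points lie in the ``$a$''-suborbit $\mathcal{O}_{(d+1)a}$ (for $d+1\leq3$). These four possibilities are exactly cases $(1)$--$(4)$.

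The low-distance cases are quick. For $d=0$ we have $p=\omega$, so $L=L_\omega=\{\omega\}\cup\mathcal{O}_{1a}$ and the two far points lie in $\mathcal{O}_{1a}$, giving $(1)$. For $d=1$, if $p$ were in $\mathcal{O}_{1a}$ then $p\in L_\omega$ and hence $L=L_p=L_\omega\ni\omega$, contradicting $d=1$; so $p\in\mathcal{O}_{1b}$. The quad $Q$ through the (non-spread) line $\omega p$ supplied by Lemma \ref{lem4.4} contains $L_\omega$, and since by Corollary \ref{co4.3} the spread lines inside $Q$ cover $p$, it also contains $L=L_p$. As $\omega\in Q$ and $Q\cong W(2)$ has diameter $2$, each far point of $L$ shares the quad $Q$ with $\omega$ and therefore lies in $\mathcal{O}_{2a}$ by Lemma \ref{lem4.2}; this is case $(2)$.

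The higher-distance cases carry the real work. For $d=2$ the nearest point cannot be in $\mathcal{O}_{2a}$: otherwise Lemma \ref{lem4.2} would place $\omega$ and $p$ in a common quad $Q$, Corollary \ref{co4.3} would force $L=L_p\subseteq Q$, and the far points would then be within distance $2$ of $\omega\in Q$ --- impossible. Hence $p\in\mathcal{O}_{2b}$. To locate a far point $q$ (at distance $3$), I would take a geodesic $\omega,z,p$ and consider the non-spread line $zp$; by Lemma \ref{lem4.4} it lies in a quad $Q'$ containing $L=L_p$, so $Q'$ passes through $q$. Since $\dist(\omega,q)=3$ we have $\omega\notin Q'$, while $\omega\sim z\in Q'$ forces $\dist(\omega,Q')=1$, so the line $\omega z$ meets $Q'$ in the single point $z$; the implication $(4)\Rightarrow(1)$ of Lemma \ref{lem4.9} then gives $q\in\mathcal{O}_{3a}$, which is case $(3)$. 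Finally, for $d=3$ the far points lie at distance $4$, i.e.\ in $\mathcal{O}_4$, automatically; and if the nearest point $p$ were in $\mathcal{O}_{3a}$, Lemma \ref{lem4.9} would produce a quad $Q\ni\omega$ meeting a line through $p$, whence $\dist(L,Q)\leq1$, so Lemma \ref{lem4.7} keeps all of $L$ at distance $\leq1$ from $Q$ and the classicality of point-quad pairs (Lemma \ref{lem4.8}) bounds $\dist(\omega,u)\leq3$ for every $u\in L$ --- contradicting the far points being at distance $4$. Thus $p\in\mathcal{O}_{3b}$, giving case $(4)$. I expect the $d=2$ step to be the main obstacle, since it is where Lemma \ref{lem4.9} must be invoked through a carefully constructed auxiliary quad, and where one must keep straight that a spread line never crosses a quad transversally while it may still be contained in one.
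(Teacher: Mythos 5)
Your proof is correct, but it takes a genuinely different route from the paper's for the harder half of the lemma. The paper organizes the case analysis by which suborbit $L$ happens to meet, and its key step for separating cases $(3)$ and $(4)$ is a counting argument: it first shows (using the uniqueness, visible in the suborbit diagram, of the line from a point of $\mathcal{O}_{3a}$ back to $\mathcal{O}_{2a}$) that every spread line meeting $\mathcal{O}_{3a}$ also meets $\mathcal{O}_{2b}$, then observes that this accounts for $|\mathcal{O}_{3a}|/2 = 320$ spread lines, which exactly exhausts the $320$ points of $\mathcal{O}_{2b}$; the remaining spread lines are then forced into case $(4)$. You instead organize by $d=\dist(\omega,L)$ and settle the $a/b$ refinement pointwise and without counting: for $d=2$ you build an auxiliary quad $Q'$ through $L$ via a geodesic and Lemma \ref{lem4.4} and invoke the implication $(4)\Rightarrow(1)$ of Lemma \ref{lem4.9} to put the far points in $\mathcal{O}_{3a}$, and for $d=3$ you use $(1)\Rightarrow(3)$ of Lemma \ref{lem4.9} together with Lemma \ref{lem4.7} to exclude $\mathcal{O}_{3a}$ for the nearest point. (Your low-distance cases coincide with the paper's.) What each approach buys: the paper's counting step is shorter but leans on the specific orbit sizes read off the suborbit diagram; your argument is uniform across the distances, purely geometric given the earlier lemmas, and makes the symmetric relation of Lemma \ref{lem4.9} do the work that the paper delegates to arithmetic. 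One cosmetic remark: in your $d=3$ case the appeal to classicality (Lemma \ref{lem4.8}) is not needed, since any point at distance at most $1$ from a quad containing $\omega$ is automatically at distance at most $3$ from $\omega$ because quads have diameter $2$.
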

\begin{proof}
If $L$ meets $\mathcal{O}_0 \cup \mathcal{O}_{1a}$, then necessarily $L= L_\omega$ and so $L$ meets the suborbits $\mathcal{O}_0$ and $\mathcal{O}_{1a}$. So, we may suppose that $L$ is disjoint from $\mathcal{O}_0 \cup \mathcal{O}_{1a}$.

Suppose $L$ contains a point $x \in \mathcal{O}_{1b}$. Then the unique quad through the line $\omega x$ contains $L$ and is completely contained in $\mathcal{O}_0 \cup \mathcal{O}_{1a} \cup \mathcal{O}_{1b} \cup \mathcal{O}_{2a}$, showing that $L$ meets $\mathcal{O}_{1b}$ and $\mathcal{O}_{2a}$. On the other hand, suppose that $L$ contains a point $y \in \mathcal{O}_{2a}$. Then $\omega$ and $y$ are contained in a unique quad which contains the line $L$. Since $Q \subseteq \mathcal{O}_0 \cup \mathcal{O}_{1a} \cup \mathcal{O}_{1b} \cup \mathcal{O}_{2a}$, $L_\omega = \mathcal{O}_0 \cup \mathcal{O}_{1a}$ and $L_y \cap L_\omega = \emptyset$, we have that $L$ meets $\mathcal{O}_{1b}$ and $\mathcal{O}_{2a}$. In the sequel, we will therefore assume that $L$ is disjoint from $\mathcal{O}_0 \cup \mathcal{O}_{1a} \cup \mathcal{O}_{1b} \cup \mathcal{O}_{2a}$.

Suppose $L$ contains a point $x$ of $\mathcal{O}_{3a}$. By the suborbit diagram, we know that the point $x$ is contained in a unique line $M$ that contains a point $y \in \mathcal{O}_{2a}$. The line $L_y$ meets $\mathcal{O}_{1b}$ in a point $u$ and is distinct from $M$, showing that $M \not\in S^\ast$. The unique quad $Q$ through $M$ contains the lines $L_y$ and $L$. So, $L$ contains a point at distance 2 from $\omega$, namely the point of $L$ collinear with $u \in L_y$. This shows that $L$ meets $\mathcal{O}_{3a}$ and $\mathcal{O}_{2b}$. As $|L \cap \mathcal{O}_{3a}|=2$, there are $\frac{|\mathcal{O}_{3a}|}{2} = \frac{640}{2} = 320$ lines of $S^\ast$ meeting $\mathcal{O}_{3a}$ and $\mathcal{O}_{2b}$. Since also $|\mathcal{O}_{2b}|=320$, we have that every line of $S^\ast$ that meets $\mathcal{O}_{2b}$ should also meet $\mathcal{O}_{3a}$.

The lines of $S^\ast$ which we have not yet considered should all meet $\mathcal{O}_{3b}$ and $\mathcal{O}_4$.
\end{proof}

\bigskip \noindent As before, we denote by $\mathcal{Q}^\ast$ the set of quads of $\mathcal{G}$.

\begin{lem} \label{lem4.11}
The geometry formed by taking the lines in $S^\ast$ as points and the quads in $\mathcal{Q}^\ast$ as lines is isomorphic to $\mathrm{H}(4)^D$.
\end{lem}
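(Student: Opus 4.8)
The plan is to show that $\mathcal{H} := (S^\ast, \mathcal{Q}^\ast)$ is a generalized hexagon of order $(4,4)$ and then to recognize it specifically as $\mathrm{H}(4)^D$. First I would pin down the parameters. By Corollary~\ref{co4.3} the lines of $S^\ast$ lying in a fixed quad $Q \cong W(2)$ form a spread of $Q$, hence there are exactly $5$ of them; and by Corollary~\ref{co4.5} every line of $S^\ast$ lies in exactly $5$ quads (each quad through a point $x$ contains $L_x$, and conversely every quad containing $L_x$ passes through $x$). Counting flags, $4095 \cdot 5 = 15 \cdot |\mathcal{Q}^\ast|$ gives $|\mathcal{Q}^\ast| = 1365 = |S^\ast|$, so $\mathcal{H}$ has $1365$ points and $1365$ lines with $5$ points per line and $5$ lines per point. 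That $\mathcal{H}$ is a partial linear space is immediate from the standard fact that two distinct quads of a near polygon meet in at most a line: two lines of $S^\ast$ therefore lie together in at most one quad.

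Next I would transfer the suborbit diagram to $\mathcal{H}$. Fixing a point $\omega$ and writing $L_\omega$ for its spread line, Lemma~\ref{lem4.10} sorts every line of $S^\ast$ into exactly one of four types according to the pair of suborbits it meets, and I would check that these four types are precisely the lines at distance $0,1,2,3$ from $L_\omega$ in the collinearity graph of $\mathcal{H}$ (two lines of $S^\ast$ being adjacent in $\mathcal{H}$ when they share a quad). Type~$(1)$ is $L_\omega$ itself; for type~$(2)$ the quad through $\omega$ and an $\mathcal{O}_{2a}$-point of the line contains both that line and $L_\omega$, so these $20$ lines are exactly the $\mathcal{H}$-neighbours of $L_\omega$; tracing quad-paths as in the proof of Lemma~\ref{lem4.10} places the $320$ type~$(3)$ lines at distance $2$ and the $1024$ type~$(4)$ lines at distance $3$. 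This yields the distance distribution $1,20,320,1024$ from any point, which is exactly that of a generalized hexagon of order $(4,4)$, and in particular shows $\mathcal{H}$ has diameter $3$.

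To upgrade this to the generalized hexagon property I would verify that $\mathcal{H}$ is a near hexagon with $t_2 = 0$, from which the absence of ordinary digons, triangles, quadrangles and pentagons (hence incidence girth $12$) follows. The near-hexagon axiom (NP2) for a point $L \in S^\ast$ and a line $Q \in \mathcal{Q}^\ast$ of $\mathcal{H}$ asks for a unique line of $S^\ast$ in $Q$ nearest to $L$; this I would deduce from the classical point-quad structure of $\mathcal{G}$ recorded in Lemmas~\ref{lem4.7} and~\ref{lem4.8}. The condition $t_2 = 0$ says that two points of $\mathcal{H}$ at distance $2$ have a unique common neighbour: for $L_\omega$ and a type~$(3)$ line $M$, a common $\mathcal{H}$-neighbour must be one of the $20$ type~$(2)$ lines that also shares a quad with $M$, and the incidence data on the big nodes of Figure~\ref{fig:sub1} (together with $\mathrm{G}_2(4){:}2$-transitivity) forces this count to be exactly $1$. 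Combined with the uniqueness of the quad through two adjacent points, this establishes that $\mathcal{H}$ is a generalized hexagon of order $(4,4)$.

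Finally I would identify $\mathcal{H}$ with $\mathrm{H}(4)^D$ rather than merely with some hexagon of order $(4,4)$, and I expect this to be the main obstacle: $\mathrm{H}(4)$ and $\mathrm{H}(4)^D$ are non-isomorphic hexagons of the same order, so no parameter count can separate them. The resolution uses the group. By the first remark following the construction theorem, the points of $\mathcal{H}$ (the lines of $S^\ast$) are exactly the $1365$ long root subgroups of $\mathrm{G}_2(4)$, and $G = \mathrm{G}_2(4){:}2$ acts on $\mathcal{H}$ by automorphisms. Matching this action on long root subgroups with the standard parabolic description of the split Cayley hexagon fixes which of the two classes of maximal parabolics plays the role of the points and which the role of the lines, and thereby shows that the long root subgroups are the points of $\mathrm{H}(4)^D$ (equivalently the lines of $\mathrm{H}(4)$); hence $\mathcal{H} \cong \mathrm{H}(4)^D$. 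The generalized hexagon axioms are essentially bookkeeping on the suborbit diagram, but this last dictionary between root subgroups and the split Cayley hexagon is what genuinely selects the dual.
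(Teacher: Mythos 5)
Your proposal reaches the right destination but by a much longer route than the paper, and the step that does all the real work is the one you develop least. The paper's proof is essentially a one-step identification: the elements of $S^\ast$ correspond to the $1365$ long root subgroups of $\mathrm{G}_2(4)$ via $\{x,y,xy\} \leftrightarrow \{e,x,y,xy\}$; two spread lines lie in a common quad if and only if the corresponding root subgroups commute (Corollary \ref{co4.3} combined with Corollary \ref{co4.6}); and it is classical that the commuting graph on the long root subgroups of $\mathrm{G}_2(4)$ is the collinearity graph of $\mathrm{H}(4)^D$, with the lines of $\mathrm{H}(4)^D$ recovered as its maximal $5$-cliques. Since the quads are $5$-cliques of $(S^\ast,\mathcal{Q}^\ast)$ and the counts match, the two geometries are isomorphic. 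Your final paragraph is exactly this fact in disguise, which makes everything preceding it --- the verification that $(S^\ast,\mathcal{Q}^\ast)$ is a generalized hexagon of order $(4,4)$ --- dispensable; and, as you yourself observe, that combinatorial work could not stand alone even if completed, because generalized hexagons of order $(4,4)$ are not classified and no parameter or girth computation separates $\mathrm{H}(4)$ from $\mathrm{H}(4)^D$. Two cautions about the step you do rely on. First, matching the $G$-action with the parabolic description only identifies the \emph{point set} of your geometry with that of $\mathrm{H}(4)^D$; to obtain an isomorphism of geometries you must also match collinearity, and this is precisely where the bridge ``sharing a quad $\Leftrightarrow$ the corresponding root subgroups commute'' is indispensable (alternatively, your distance classes of sizes $1,20,320,1024$ pin down the collinearity suborbit by equivariance, but you would need to say this explicitly). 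Second, your verification of $t_2=0$ from ``the incidence data on the big nodes of Figure \ref{fig:sub1} together with transitivity'' is asserted rather than proved; if you retain the generalized-hexagon portion of the argument, that common-neighbour count requires an actual computation with the quads through a type~$(3)$ spread line.
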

\begin{proof}
It is well known that the collinearity graph of $\mathrm{H}(4)^D$ is isomorphic to the graph whose vertices are the 1365 long root subgroups of $\mathrm{G}_2(4) = G'$, with two distinct long root subgroups being adjacent whenever they commute. The lines of $\mathrm{H}(4)^D$ then correspond to the maximal cliques (of size 5) of this collinearity graph. Since every line of $(S^\ast,\mathcal{Q}^\ast)$ contains five points, it thus suffices to prove that the collinearity graphs of $(S^\ast,\mathcal{Q}^\ast)$ and $\mathrm{H}(4)^D$ are isomorphic. The elements of $S^\ast$ are the sets $\{ x,y,xy \}$, where $x$ and $y$ are two distinct central involutions satisfying $[G : N_G(\langle x,y \rangle)] = 1365$, and can be put in bijective correspondence with the long root subgroups of $\mathrm{G}_2(4)$ via the correspondence $\{ x,y,xy \} \leftrightarrow \{ e,x,y,xy \}$ (with $e$ being the identity element of $\mathrm{G}_2(4)$). Now, two central involutions commute if and only if they are contained in a quad (regarded as points of $\mathcal{G}$). So, in view of Corollary \ref{co4.3}, two long root subgroups $\{ e,x,y,xy \}$ and $\{ e,x',y',x'y' \}$ commute if and only if the corresponding lines $\{ x,y,xy \}$ and $\{ x',y',x'y' \}$ of $S^\ast$ are contained in a quad. This  shows that the collinearity graphs of $(S^\ast,\mathcal{Q}^\ast)$ and $\mathrm{H}(4)^D$ are isomorphic.
\end{proof}

\begin{lem} \label{lem4.12}
Let $K$ and $M$ be two lines of $S^\ast$ and let $\delta$ denote the distance between $K$ and $M$ in the generalized hexagon $(S^\ast,\mathcal{Q}^\ast) \cong \mathrm{H}(4)^D$. Then $\dist(K,M)=\delta$ and every point of $K$ has distance $\delta$ from a unique point of $M$.  
\end{lem}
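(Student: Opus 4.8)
The plan is to treat the two assertions separately. The uniqueness is essentially free: since $M$ is a line of the near octagon, axiom (NP2) gives, for each $x\in K$, a unique point $\pi_M(x)$ of $M$ nearest to $x$. Hence the only real content is the equality $\dist(x,M)=\delta$ for every $x\in K$; taking the minimum over $x\in K$ then also yields $\dist(K,M)=\delta$. Writing $d_x:=\dist(x,M)$, I would establish the two inequalities $d_x\le\delta$ and $\delta\le d_x$ and conclude $d_x=\delta$.

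For the upper bound $d_x\le\delta$, I would walk along a geodesic $K=N_0,N_1,\dots,N_\delta=M$ in the collinearity graph of $(S^\ast,\mathcal{Q}^\ast)\cong \mathrm{H}(4)^D$. Two consecutive points $N_i,N_{i+1}$ are collinear in the hexagon, so they are two lines of $S^\ast$ contained in a common quad $Q_i$; by Corollary \ref{co4.3} they are disjoint lines of the generalized quadrangle $Q_i\cong W(2)$ (Lemma \ref{lem4.2}). The quadrangle axiom then provides, for each point of $N_i$, a unique collinear point of $N_{i+1}$, and since $Q_i$ is a full convex subgeometry this is a single step in $\mathcal{G}$. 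Starting at $x\in N_0$ and following $\delta$ such steps reaches a point of $M$ at $\mathcal{G}$-distance at most $\delta$ from $x$, so $d_x\le\delta$.

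The reverse inequality $\delta\le d_x$ is the heart of the argument. I would take a geodesic $x=p_0,p_1,\dots,p_{d_x}=\pi_M(x)$ in $\mathcal{G}$ and track the associated spread lines $L_{p_0}=K,L_{p_1},\dots,L_{p_{d_x}}=M$. The key claim is that $L_{p_i}$ and $L_{p_{i+1}}$ are at hexagon-distance at most $1$ for each $i$. If the line $p_ip_{i+1}$ lies in $S^\ast$, then $L_{p_i}=L_{p_{i+1}}$; otherwise $p_ip_{i+1}$ is distinct from both $L_{p_i}$ and $L_{p_{i+1}}$, and Lemma \ref{lem4.4} places it in a unique quad. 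Applying that lemma at $p_i$ and at $p_{i+1}$ shows that this single quad contains both $L_{p_i}$ and $L_{p_{i+1}}$, so these two spread lines lie in a common quad and are therefore collinear in $(S^\ast,\mathcal{Q}^\ast)$. Chaining the $d_x$ steps and using the triangle inequality in the hexagon gives $\delta\le d_x$.

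Combining the two inequalities yields $d_x=\delta$ for every $x\in K$, which together with (NP2) is exactly the statement that each point of $K$ lies at distance $\delta$ from a unique point of $M$; minimizing over $x\in K$ then gives $\dist(K,M)=\delta$. The main obstacle is the claim isolated in the lower bound, namely that each elementary step of a $\mathcal{G}$-geodesic moves the associated spread line by hexagon-distance at most $1$; this rests entirely on the \emph{uniqueness} of the quad through a non-spread line (Lemma \ref{lem4.4}), which forces the quads produced at the two endpoints $p_i,p_{i+1}$ to coincide and hence to contain both $L_{p_i}$ and $L_{p_{i+1}}$. Once this is secured, both distance estimates reduce to routine walks along geodesics.
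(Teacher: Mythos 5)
Your proposal is correct and follows essentially the same route as the paper: both directions are obtained by converting a geodesic in $\mathcal{G}$ into a walk of spread lines (using the uniqueness of the quad through a non-spread line, Lemma \ref{lem4.4}, to see that consecutive spread lines lie in a common quad) and, conversely, converting a hexagon geodesic into a chain of quad-projections between disjoint spread lines of $W(2)$-quads. The only difference is bookkeeping — you argue pointwise with $d_x$ where the paper works with $\dist(K,M)$ directly — which does not change the substance.
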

\begin{proof}
Put $\delta' := \dist(K,M)$ and let $x_0,x_1,\ldots,x_{\delta'}$ be a path of length $\delta'$ connecting a point $x_0$ of $K$ with a point $x_{\delta'}$ of $M$. Put $L_i := L_{x_i}$, $i \in \{ 0,1,\ldots,\delta' \}$. If two consecutive lines $L_i$ and $L_{i+1}$ are distinct, then they are contained in the unique quad through $x_ix_{i+1}$ and so every point of $L_i$ is collinear with a unique point of $L_{i+1}$. But since $\delta'$ is the smallest distance between a point of $K$ and a point of $M$, two consecutive lines are $L_i$ and $L_{i+1}$ must be distinct (otherwise we could construct a shorter path), and every point of $K$ has distance $\delta'$ from a necessarily unique point of $M$. So, in order to prove the lemma, it suffices to show that $\delta = \delta'$. Since there exists a path of length $\delta'$ in $(S^\ast,\mathcal{Q}^\ast)$ connecting $K$ and $M$, we have $\delta \leq \delta'$. So, it suffices to show that $\delta \geq \delta'$.

Suppose $U_0,U_1,\ldots,U_\delta$ is a path of length $\delta$ in $(S^\ast,\mathcal{Q}^\ast)$ connecting the lines $U_0=K$ and $U_\delta=M$. For every two consecutive lines $U_i$ and $U_{i+1}$, we know that every point of $U_i$ is collinear with a unique point of $U_{i+1}$, implying that there exists a point of $M$ at distance at most $\delta$ from a point of $K$. This implies that $\delta \geq \delta'$, as we needed to show. 
\end{proof}

\medskip \noindent The following is an immediate corollary of Lemma \ref{lem4.12}.

\begin{cor} \label{co4.12}
Let $x$ and $y$ be two points of $\mathcal{G}$, let $y'$ be the point of $L_y$ nearest to $x$ and let $\delta$ denote the distance between $L_x$ and $L_y$ in the generalized hexagon $(S^\ast,\mathcal{Q}^\ast) \cong \mathrm{H}(4)^D$. Then $\dist(x,y)$ is equal to $\delta$ if $y' = y$ and equal to $\delta+1$ otherwise. 
\end{cor}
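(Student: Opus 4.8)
The plan is to read the result straight off Lemma \ref{lem4.12}, applied to the two spread lines through $x$ and $y$, together with the elementary ``projection onto a line'' property of a near polygon. Since $S^\ast$ is a line spread of $\mG$ by Lemma \ref{lem4.1}, the lines $L_x$ and $L_y$ are well defined and belong to $S^\ast$, so I would set $K := L_x$ and $M := L_y$ and invoke Lemma \ref{lem4.12} with these choices.

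First I would extract from Lemma \ref{lem4.12} its per-point conclusion: every point of $L_x$ lies at distance $\delta$ from a unique point of $L_y$, where $\delta$ is the distance between $L_x$ and $L_y$ in $(S^\ast,\mathcal{Q}^\ast) \cong \mathrm{H}(4)^D$. Specializing to the point $x \in L_x$, there is a unique point $w \in L_y$ with $\dist(x,w) = \delta$. Because Lemma \ref{lem4.12} also gives $\dist(L_x,L_y) = \delta$, no point of $L_y$ lies strictly closer than $\delta$ to $x$; hence $\dist(x,L_y) = \delta$ and $w$ is in fact the point of $L_y$ nearest to $x$. By (NP2) the nearest point on $L_y$ to $x$ is unique, so $w = y'$ and $\dist(x,y') = \delta$.

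It then remains to compare $y$ with $y'$. Here I would use the standard fact that, for the line $L_y$ and the point $x$, every point of $L_y$ distinct from the nearest point $y'$ lies at distance $\dist(x,y') + 1 = \delta + 1$: two collinear points differ in distance from $x$ by at most one, the distance cannot drop below $\delta$ by minimality, and equality with $\delta$ is ruled out for points other than $y'$ by the uniqueness in (NP2). Since $L_y$ has three points, this yields $\dist(x,y) = \delta$ when $y = y'$ and $\dist(x,y) = \delta + 1$ when $y \neq y'$, which is exactly the claim.

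There is essentially no obstacle here; the only point demanding a line of justification is the identification of $y'$ with the unique point of $L_y$ at distance $\delta$ from $x$, and this is handed to us by the stronger per-point formulation in Lemma \ref{lem4.12} rather than by the bare equality of line-distances. Everything else is the routine behaviour guaranteed by (NP2), so the corollary really is immediate.
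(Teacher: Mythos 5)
Your proof is correct and follows exactly the route the paper intends: the paper states this as an immediate consequence of Lemma \ref{lem4.12} without further argument, and your write-up simply makes explicit the specialization to the point $x$ on $L_x$, the identification of the unique nearest point $y'$ via (NP2), and the standard fact that the remaining points of $L_y$ lie one step further away. Nothing is missing.
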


\bigskip \noindent The dual split Cayley hexagon $\mathrm{H}(4)^D$ is known to have copies of the unique generalized hexagon of order $(4,1)$ as subgeometries . Each such subhexagon can be used to construct a full subgeometry of $\mathcal{G}$ that is also a (new) near polygon. 
This near polygon has the same order and the same number of points as $\HJ$. 

\begin{lem} \label{lem4.14}
Let $S \subseteq S^\ast$ and $\mathcal{Q} \subseteq \mathcal{Q}^\ast$ such that $(S,\mathcal{Q})$ is a full subgeometry of $(S^\ast,\mathcal{Q}^\ast) \cong \mathrm{H}(4)^D$ that is a generalized hexagon of order $(4,1)$. Let $\mathcal{P}'$ denote the set of points incident with a line of $S$ and let $\mathcal{L}_1$ denote the set of lines incident with a quad of $\mathcal{Q}$. Then $\mG' = (\mathcal{P}',\mathcal{L}')$ is a full subgeometry of $\mathcal{G} = (\mathcal{P},\mathcal{L})$ that is a near octagon of order $(2,4)$.
\end{lem}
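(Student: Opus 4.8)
The plan is to first pin down the parameters and fullness of $\mG'$, and then to show that $\mG'$ is isometrically embedded in $\mG$, from which both near-polygon axioms follow at once.

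\emph{Order and fullness.} I would begin by recording the behaviour of the two types of lines of $\mG$ relative to the spread: a line of $S^\ast$ sits inside every quad through it as a spread line (Corollary \ref{co4.3}), while a line not in $S^\ast$ lies in a unique quad (Lemma \ref{lem4.4}). Since $(S,\mathcal{Q})$ has order $(4,1)$, each quad $Q\in\mathcal{Q}$, regarded as a line of $(S^\ast,\mathcal{Q}^\ast)\cong\mathrm{H}(4)^D$, is full, so all five of its spread lines lie in $S$. Consequently every point lying on a line of such a $Q$ lies on a line of $S$, which gives both fullness of $\mG'$ and $s=2$. For the number of lines of $\mathcal{L}'$ through a point $x\in\mathcal{P}'$: the spread line $L_x\in S$ is a point of the order-$(4,1)$ hexagon $(S,\mathcal{Q})$ and hence lies in exactly two quads of $\mathcal{Q}$; in each such quad $\cong W(2)$ the two lines through $x$ other than $L_x$ belong to $\mathcal{L}'$. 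Together with $L_x$ this yields exactly $1+2\cdot 2=5$ lines of $\mathcal{L}'$ through $x$, so $t=4$.

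\emph{Reduction to an isometry statement.} The heart of the argument would be to prove that $d_{\mG'}(x,y)=d_{\mG}(x,y)$ for all $x,y\in\mathcal{P}'$. Granting this isometric embedding, NP1 and NP2 are immediate. Indeed $\mG'$ then has diameter at most $4$ (as $\mG$ has diameter $4$), and for a point $x\in\mathcal{P}'$ and a line $L\in\mathcal{L}'$ all three points of $L$ lie in $\mathcal{P}'$ by fullness, so the unique point of $L$ nearest to $x$ in $\mG$ (which exists because $\mG$ is a near octagon) is, by isometry, also the unique nearest point of $L$ in $\mG'$; this is exactly NP2. That the diameter is \emph{exactly} $4$ then follows by taking two spread lines of $S$ that are opposite in $(S,\mathcal{Q})$: once the isometry below is established they remain opposite in $(S^\ast,\mathcal{Q}^\ast)$, and Corollary \ref{co4.12} produces a pair of points of $\mG'$ at distance $4$.

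\emph{Proving the isometry.} To establish $d_{\mG'}=d_{\mG}$ I would replay the arguments of Lemma \ref{lem4.12} and Corollary \ref{co4.12} inside $\mG'$. Every edge of $\mG'$ either joins two points of a single spread line of $S$ or joins points of two distinct spread lines lying in a common quad of $\mathcal{Q}$; hence any path in $\mG'$ projects to a walk of no greater length in $(S,\mathcal{Q})$, while conversely any path in $(S,\mathcal{Q})$ lifts, quad by quad, to a path of the same length in $\mG'$ (using that \emph{all} lines of a quad $Q\in\mathcal{Q}$ lie in $\mathcal{L}'$). As in Corollary \ref{co4.12} this shows $d_{\mG'}(x,y)$ equals $\delta'$ or $\delta'+1$, where $\delta'$ is the distance between $L_x$ and $L_y$ in $(S,\mathcal{Q})$, governed by the same criterion on whether $y$ is the point of $L_y$ nearest $x$.

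The main obstacle is therefore the remaining comparison $\delta'=\delta$, where $\delta$ is the distance in $(S^\ast,\mathcal{Q}^\ast)\cong\mathrm{H}(4)^D$; in words, that the subhexagon $(S,\mathcal{Q})$ is isometrically embedded in $\mathrm{H}(4)^D$. I expect to prove this by a girth argument: if two points of $S$ had $(S^\ast,\mathcal{Q}^\ast)$-distance $d<\delta'=d_{(S,\mathcal{Q})}$, then a geodesic and an $(S,\mathcal{Q})$-geodesic between them would combine into a closed walk of incidence-length $2(d+\delta')\le 10$ in the incidence graph of $\mathrm{H}(4)^D$, contradicting its girth $12$ once one reduces to a minimal counterexample to rule out degenerate (vertex-sharing) walks. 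With $\delta'=\delta$ in hand, the $\mG'$- and $\mG$-computations of distance coincide, the isometric embedding is proved, and the previous paragraph completes the verification that $\mG'$ is a near octagon of order $(2,4)$.
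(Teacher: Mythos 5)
Your proposal is correct and follows essentially the same route as the paper: fullness and the order $(2,4)$ via the two quads of $\mathcal{Q}$ through each spread line, reduction of NP1/NP2 to an isometric embedding of $\mG'$ in $\mG$, and that isometry via the distance formula of Corollary \ref{co4.12} combined with the fact that $(S,\mathcal{Q})$ is isometrically embedded in $(S^\ast,\mathcal{Q}^\ast)$. The only difference is that you sketch a girth-$12$ proof of this last fact, which the paper simply asserts as a known property of order-$(4,1)$ subhexagons of $\mathrm{H}(4)^D$; your sketch is sound.
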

\begin{proof}
Let $L \in \mathcal{L}'$. Then $L \subseteq Q$ for a certain quad $Q \subseteq \mathcal{Q}$. 
The lines of $S^\ast$ contained in $Q$ all belong to $S$, implying that every point of $L$ belongs to $\mathcal{P}'$. So, $\mG' = (\mathcal{P}',\mathcal{L}')$ is a full subgeometry of $\mathcal{G} = (\mathcal{P},\mathcal{L})$.

If $x$ and $y$ are two points of $\mathcal{P}'$ which are collinear in $\mathcal{G}$, then the lines $L_x \in S$ and $L_y \in S$ are equal or collinear in $(S^\ast,\mathcal{Q}^\ast)$ and hence also in $(S,\mathcal{Q})$. There must exist a quad $Q \in \mathcal{Q}$ containing $L_x$ and $L_y$. The lines of $S$ contained in $Q$ cover $Q$, showing that every point of the line $xy$ is contained in $\mathcal{P}'$. 
So, $\mathcal{P}'$ is a subspace of $\mathcal{G}$ and the full subgeometry induced on $\mathcal{P}'$ is precisely $\mG'$.

Note that any subhexagon of order $(4,1)$ of $\mathrm{H}(4)^D$ is isometrically embedded into $\mathrm{H}(4)^D$. 
So, if $K$ and $L$ are two lines of $S$, then the distance between $K$ and $L$ is the same in the geometries $(S,\mathcal{Q})$ and $(S^\ast,\mathcal{Q}^\ast)$. Corollary \ref{co4.12} then implies that the distance between two points $x, y \in \mathcal{P}'$ is the same in the geometries $\mathcal{G}$ and $\mG'$. So, Property (NP2) in the definition of near polygon remains valid for $\mG'$. By taking suitable points on opposite lines belonging to $S$, we see that the diameter of $\mG'$ is also 4. So, $\mG'$ is a near octagon.

Every point $x \in \mathcal{P}'$ is contained in two quads of $S$ which intersect in the line $L_x$. So, there are precisely five lines of $\mathcal{L}'$ through $x$, showing that the near octagon $\mG'$ has order $(2,4)$.
\end{proof}

\subsection{The Hall-Janko suboctagons} \label{sec4.2}

In this subsection, we classify all Hall-Janko suboctagons of $\mathcal{G}$. These are (full) subgeometries of $\mathcal{G}$ that are isomorphic to $\HJ$. We will show that there are 416 such subgeometries and that all of them are isometrically embedded into $\mathcal{G}$. In the following lemma, we already construct all these 416 subgeometries from the 416 (maximal) subgroups of $\mathrm{G}_2(4){:}2$ isomorphic to $\mathrm{J}_2{:}2$.

\begin{lem} \label{lem4.15}
\begin{enumerate}
\item[$(1)$] Let $H$ be a (maximal) subgroup of $G = \mathrm{G}_2(4) : 2$ isomorphic to $\mathrm{J}_2{:}2$. Then the set $\Sigma_H$ of central involutions contained in $H$ is a subspace of $\mathcal{G}$ on which the induced subgeometry, denoted by $\mathcal{S}_H$, is a Hall-Janko suboctagon.
\item[$(2)$] If $H_1$ and $H_2$ are two distinct maximal subgroups of $G$ isomorphic to $\mathrm{J}_2{:}2$, then $\mathcal{S}_{H_1}$ and $\mathcal{S}_{H_2}$ are distinct subgeometries.
\end{enumerate}
\end{lem}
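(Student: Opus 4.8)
The plan is to handle the two parts separately, with essentially all the content in part $(1)$.

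For part $(1)$ I would first check that $\Sigma_H$ is a subspace. A line of $\mathcal{G}$ has the form $\{x,y,xy\}$, where $x,y$ are commuting central involutions of $G$, and here the three points pairwise commute. Suppose two of them lie in $\Sigma_H$. Using the identity $\Sigma_K = K \cap \Sigma_G$ recalled in Section~\ref{sec2} (applied with $K$ replaced by $H$), this means the two points lie in $H \cap \Sigma_G$. Their product is the third point of the line; it lies in $H$ because $H$ is a subgroup, and it is a central involution of $G$ because it is a point of $\mathcal{G}$, so it lies in $H \cap \Sigma_G = \Sigma_H$. Hence every line of $\mathcal{G}$ meeting $\Sigma_H$ in at least two points is contained in $\Sigma_H$, so $\Sigma_H$ is a subspace and the induced geometry $\mathcal{S}_H = (\Sigma_H, \cdot)$ is well defined.

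Next I would identify $\mathcal{S}_H$ with $\HJ$. By the group-theoretic facts in Section~\ref{sec2}, $\Sigma_H \subseteq H' \cong \mathrm{J}_2$, and under an abstract isomorphism $H \cong \mathrm{J}_2{:}2$ the set $\Sigma_H$ corresponds to the central involutions of $\mathrm{J}_2{:}2$, which all lie in $\mathrm{J}_2$ and constitute exactly the $315$ central involutions on which $\HJ$ is built. Since $\HJ$ is by construction the geometry whose points are these central involutions and whose lines are the triples $\{x,y,xy\}$ of commuting central involutions, it suffices to show that the line set of $\mathcal{S}_H$, namely the lines of $\mathcal{G}$ contained in $\Sigma_H$, coincides with the set of all commuting triples $\{x,y,xy\}$ inside $H'$. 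One inclusion is immediate, since any line of $\mathcal{G}$ is such a commuting triple. For the reverse inclusion I would invoke the second of the remarks following the construction theorem in Section~\ref{sec3}: any two distinct commuting central involutions $x,y$ of $H$ satisfy $[G:N_G(\langle x,y\rangle)] = 13650 \in \{1365,13650\}$, so $\{x,y,xy\}$ is a line of $\mathcal{G}$ (and it lies in $\Sigma_H$ since the latter is a subspace). An abstract isomorphism $H' \to \mathrm{J}_2$ then restricts to a bijection on central involutions preserving commutativity, hence carrying the lines of $\mathcal{S}_H$ onto those of $\HJ$; this gives $\mathcal{S}_H \cong \HJ$, so $\mathcal{S}_H$ is indeed a Hall-Janko suboctagon.

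For part $(2)$, suppose toward a contradiction that $\mathcal{S}_{H_1} = \mathcal{S}_{H_2}$, so that $\Sigma_{H_1} = \Sigma_{H_2}$. Since $\langle \Sigma_{H_i}\rangle = H_i'$ by Section~\ref{sec2}, this forces $H_1' = H_2'$. By the bijective correspondence between $\mathrm{J}_2{:}2$-subgroups and $\mathrm{J}_2$-subgroups recalled at the end of Section~\ref{sec2}, each $H_i$ is the unique $\mathrm{J}_2{:}2$-subgroup containing $H_i'$, namely $N_G(H_i')$. Hence $H_1 = N_G(H_1') = N_G(H_2') = H_2$, contradicting $H_1 \neq H_2$; therefore $\mathcal{S}_{H_1}$ and $\mathcal{S}_{H_2}$ are distinct.

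The main obstacle is the line-matching step in part $(1)$: everything hinges on knowing that every commuting pair of central involutions lying inside $H$ has normalizer index in the admissible range $\{1365,13650\}$, and never the value $27300$ attached to the $\mathcal{O}_{2a}$ suborbit. This is exactly the computational input recorded in Section~\ref{sec3}; once it is granted, the remainder is bookkeeping with the subgroup facts of Section~\ref{sec2}.
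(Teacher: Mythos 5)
Your proposal is correct and follows essentially the same route as the paper: part (1) rests on the same key input (the remark in Section~\ref{sec3} that any two distinct commuting central involutions of a $\mathrm{J}_2{:}2$-subgroup have normalizer index $13650$, so the lines of $\mathcal{G}$ inside $\Sigma_H$ are exactly the commuting triples defining $\HJ$), and part (2) uses the same argument via $\langle \Sigma_H\rangle = H'$ and $H = N_G(H')$. Your explicit verification that $\Sigma_H$ is a subspace is a minor elaboration the paper leaves implicit.
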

\begin{proof}
(1) On the set $\Sigma_H \subseteq H' \cong \mathrm{J}_2$, a Hall-Janko near octagon $\mathcal{S}_H'$ can be defined by taking as lines all the sets $\{ x,y,xy \}$, where $x$ and $y$ are two distinct commuting elements of $\Sigma_H$. Recall that if the elements $x,y \in \Sigma_H$ commute, then $[ G : N_G(\langle x,y \rangle) ] = 13650$, implying that $\{ x,y,xy \}$ is a line of $\mathcal{G}$. Conversely, if $x,y \in \Sigma_H$ such that $\{ x,y,xy \}$ is a line of $\mathcal{G}$, then $x,y$ commute and hence $\{ x,y,xy \}$ is also a line of $\mathcal{S}_H'$.

(2) We need to show that $H$ is uniquely determined by $\Sigma_H$. The subgroup generated by $\Sigma_H$ is a normal subgroup of $H' \cong \mathrm{J}_2$ and hence coincides with $H'$. Inside $G = \mathrm{G}_2(4) {:}2$, there is a unique subgroup isomorphic to $\mathrm{J}_2{:}2$ that contains $H' \cong \mathrm{J}_2$, namely its normalizer. Hence,  $H = N_G( \langle \Sigma_H \rangle)$.
\end{proof}

\bigskip \noindent Before proceeding to prove that every Hall-Janko suboctagon is as described in Lemma \ref{lem4.15}, we first give an alternative proof of a result of \cite{adw-hvm}, stating that the point-line dual $\HJ^D$ of $\HJ$ has a full embedding into the split Cayley hexagon $\mathrm{H}(4)$.

\begin{lem} \label{lem4.16}
The geometry $\HJ^D$ has a full embedding in $\mathrm{H}(4)$.
\end{lem}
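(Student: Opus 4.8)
The plan is to dualize the statement and realize it inside $\mathcal{G}$. By Lemma \ref{lem4.11} we have $(S^\ast,\mathcal{Q}^\ast)\cong \mathrm{H}(4)^D$, so $\mathrm{H}(4)$ is isomorphic to the point-line geometry whose points are the quads in $\mathcal{Q}^\ast$, whose lines are the spread lines in $S^\ast$, and in which a quad $Q$ is incident with a line $L\in S^\ast$ exactly when $L\subseteq Q$. A full embedding of $\HJ^D$ into $\mathrm{H}(4)$ amounts to realizing the points of $\HJ^D$ (the lines of $\HJ$) as points of $\mathrm{H}(4)$ and the lines of $\HJ^D$ (the points of $\HJ$) as lines of $\mathrm{H}(4)$. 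I would therefore fix a Hall-Janko suboctagon $\mathcal{S}_H\cong\HJ$ as produced by Lemma \ref{lem4.15} and define a candidate embedding $\phi$ by sending each line $M$ of $\mathcal{S}_H$ to the unique quad $Q_M$ of $\mathcal{G}$ containing it (this quad exists by Lemma \ref{lem4.4}, since $M\notin S^\ast$) and each point $x$ of $\mathcal{S}_H$ to the spread line $L_x\in S^\ast$.

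First I would record the easy properties. The map $x\mapsto L_x$ is injective: if two distinct points $x,x'\in\Sigma_H$ lay on a common spread line $L\in S^\ast$, then $\Sigma_H$, being a subspace of $\mathcal{G}$ by Lemma \ref{lem4.15}, would contain $L$, making $L$ a line of $\mathcal{S}_H$; but every line of $\mathcal{S}_H$ has $[G:N_G(\langle\cdot,\cdot\rangle)]=13650$ whereas lines of $S^\ast$ have index $1365$, a contradiction. Thus $\mathcal{S}_H$ meets each spread line in at most one point. Incidence is preserved in the forward direction: if $x\in M$ then $M$ is a line through $x$ different from $L_x$, so by Lemma \ref{lem4.4} the quad $Q_M$ contains $L_x$, which means $\phi(x)=L_x$ is incident with $\phi(M)=Q_M$ in $\mathrm{H}(4)$. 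On the combinatorial side, every point $x$ of $\mathcal{S}_H$ lies on the five lines of $\mathcal{S}_H$ through it, and by Corollary \ref{co4.5} there are exactly five quads through $x$, all of which contain $L_x$; hence the line $L_x$ of $\mathrm{H}(4)$ is incident with exactly these five quads.

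The heart of the argument is to upgrade $\phi$ to a \emph{full} embedding, and everything reduces to a single statement: \emph{every quad of $\mathcal{G}$ contains at most one line of $\mathcal{S}_H$}. Granting this, $M\mapsto Q_M$ is injective, and for a fixed $x$ the five lines of $\mathcal{S}_H$ through $x$ are sent to five \emph{distinct} quads through $x$, which by Corollary \ref{co4.5} are precisely the five quads containing $L_x$; the five points of the line $L_x$ of $\mathrm{H}(4)$ are then exactly the $\phi$-images of the five points of the corresponding line of $\HJ^D$, which is fullness. The same statement forces the reverse incidence $L_x\subseteq Q_M\Rightarrow x\in M$: otherwise the projection $x'$ of $x$ onto $M$ inside $Q_M$ would satisfy $xx'\subseteq\Sigma_H$, producing a second line of $\mathcal{S}_H$ in $Q_M$ through the point $x'$. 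Matching the resulting parameters with those of $\HJ^D$ (order $(4,2)$, with $525$ points and $315$ lines) then identifies the image as a full subgeometry isomorphic to $\HJ^D$.

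The main obstacle is thus the quad-intersection claim, which I would attack by analysing $T:=\Sigma_H\cap Q$ for an arbitrary quad $Q\cong W(2)$. By Lemma \ref{lem4.15}, $T$ is a subspace of the generalized quadrangle $W(2)$, and by the injectivity observation above it meets each of the five spread lines of $Q$ (Corollary \ref{co4.3}) in at most one point. One must then exclude $T$ containing two lines of $\mathcal{S}_H$. For two \emph{disjoint} such lines $M_1,M_2\subseteq Q$, the three transversal lines of $W(2)$ joining them each carry two points of $\Sigma_H$ and so lie in $\mathcal{S}_H$; whenever these transversals close up into a $3\times 3$ grid inside $Q$, that grid is a subquadrangle contained in $\mathcal{S}_H$, contradicting the regularity of $\HJ$ (parameters $(2,4;0,3)$, so $t_2=0$ and two points at distance $2$ have a unique common neighbour). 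The subtler configuration is two \emph{intersecting} $\mathcal{S}_H$-lines in a single quad, where the grid obstruction does not apply directly; here I expect the delicate step to be controlling how the induced spread of $Q$ interacts with the $\mathcal{S}_H$-lines through a common point, using that $T$ never repeats on a spread line and that any two collinear points of $T$ already force the whole $\mathcal{G}$-line into $\mathcal{S}_H$. If a purely combinatorial treatment of this last case proves elusive, it can be settled directly in the computer model of $G=\mathrm{G}_2(4){:}2$, in the spirit of the verifications used throughout Section \ref{sec3}.
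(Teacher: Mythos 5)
Your overall strategy coincides with the paper's: fix $\mathcal{S}_H$ from Lemma \ref{lem4.15}, map $x \mapsto L_x$ and $M \mapsto Q_M$, and reduce everything to showing that a quad meets $\mathcal{S}_H$ in at most one line. You correctly isolate that as the crux, and your bookkeeping around it (injectivity of $x\mapsto L_x$ because $\Sigma_H$ contains no spread line, fullness via Corollary \ref{co4.5}) is sound. But you do not actually prove the crux. Your treatment of two disjoint $\mathcal{S}_H$-lines in a quad is fine (and in fact each transversal already produces a pair of \emph{intersecting} $\mathcal{S}_H$-lines, so the disjoint case reduces to the intersecting one), yet for the intersecting case you only gesture at ``controlling how the induced spread interacts with the $\mathcal{S}_H$-lines'' and then fall back on a computer check. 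That is a genuine gap: the obvious purely local argument inside the quad does not close (two points $a\in M_1\setminus\{p\}$, $b\in M_2\setminus\{p\}$ have the unique common neighbour $p$ in $\mathcal{S}_H$, which is perfectly consistent with $t_2=0$ for $\HJ$), and ruling this configuration out by hand otherwise requires the machinery of Lemmas \ref{lem4.17}--\ref{lem4.19}, which is developed only after this lemma in the paper.

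The missing idea is group-theoretic rather than combinatorial. By Corollary \ref{co4.6}, any two points of a quad commute as involutions. Hence $Q\cap\Sigma_H$ is a set of pairwise commuting central involutions of $H$, i.e.\ a clique in the collinearity graph of $\mathcal{S}_H\cong\HJ$. In a near polygon with three points per line, every maximal clique of the collinearity graph is a line, so $Q\cap\Sigma_H$ is at most a line; since it contains $L$ when $Q=Q_L$, it equals $L$. This single observation disposes of both the disjoint and the intersecting configurations at once and is exactly how the paper's proof proceeds. I would encourage you to replace your case analysis and the appeal to computation by this argument; as written, your proof is incomplete at its decisive step.
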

\begin{proof}
Let $H$ be a maximal subgroup of $G$ isomorphic to $\mathrm{J}_2{:}2$. 
Then by Lemma \ref{lem4.15}, $\mathcal{S}_H$ is a full subgeometry isomorphic to $\HJ$. Every line $L$ of $\mathcal{S}_H$ is contained in a unique quad $Q_L$ (as $L \not\in S^\ast$, see the final remarks of Section \ref{sec3}). As any two involutions of $Q_L \cap H$ commute, $Q_L \cap H$ is at most a line of $H$, implying that $L = Q_L \cap H$. So, if $L_1,L_2,\ldots,L_5$ are the five lines of $\mathcal{S}_H$ through a given point $x$, then the quads $Q_{L_1},Q_{L_2},\ldots,Q_{L_5}$ are mutually distinct and hence are all the five quads through $L_x$. This implies that the maps $x \mapsto L_x$, $L \mapsto Q_L$ define a full embedding of the dual of $\mathcal{S}_H$ into the dual of $(S^\ast,\mathcal{Q}^\ast)$, which is isomorphic to $\mathrm{H}(4)$.
\end{proof}

\bigskip \noindent In the sequel, $\mathcal{H}$ will denote an arbitrary Hall-Janko suboctagon. We will derive several properties of $\mathcal{H}$ that will enable us to prove that there are at most (and hence precisely) 416 Hall-Janko suboctagons. 

\begin{lem} \label{lem4.17}
If $x$ and $y$ are two points of $\mathcal{H}$ such that $\dist_\mH(x,y) \leq 2$ then $\dist_\mH(x,y) = \dist_\mG(x,y)$. 
\end{lem}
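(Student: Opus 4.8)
The plan is to exploit the fact that $\mathcal{H} = \mathcal{S}_H$ is a \emph{full} subgeometry of $\mathcal{G}$, together with the subspace property established in Lemma \ref{lem4.15}. Since every line of $\mathcal{H}$ is also a line of $\mathcal{G}$, any geodesic in the collinearity graph of $\mathcal{H}$ lifts to a path of the same length in the collinearity graph of $\mathcal{G}$; hence $\dist_\mG(x,y) \leq \dist_\mH(x,y)$ for all $x,y \in \mathcal{H}$, with no hypothesis on the distance. The content of the lemma is therefore the reverse inequality $\dist_\mH(x,y) \leq \dist_\mG(x,y)$ in the regime $\dist_\mH(x,y) \leq 2$, which I would establish by a short case analysis on $\dist_\mH(x,y) \in \{0,1,2\}$.

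The cases $\dist_\mH(x,y) = 0$ and $\dist_\mH(x,y)=1$ are immediate. If $x = y$ then both distances are $0$. If $x \neq y$ are collinear in $\mathcal{H}$, they lie on a common line of $\mathcal{H}$, which is a line of $\mathcal{G}$ as well, so $x$ and $y$ are collinear in $\mathcal{G}$ and $\dist_\mG(x,y) = 1 = \dist_\mH(x,y)$.

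The only substantive case is $\dist_\mH(x,y)=2$. Here I already know $\dist_\mG(x,y) \leq 2$ from the first paragraph, and $\dist_\mG(x,y) \neq 0$ since $x \neq y$, so it suffices to rule out $\dist_\mG(x,y)=1$. Suppose for contradiction that $x$ and $y$ were collinear in $\mathcal{G}$, on a line $L = \{x,y,xy\}$. By Lemma \ref{lem4.15}(1) the point set $\Sigma_H$ of $\mathcal{H}$ is a subspace of $\mathcal{G}$, and $L$ meets $\Sigma_H$ in the two points $x,y$; hence $L \subseteq \Sigma_H$. Since $\mathcal{H}$ is the full induced subgeometry on $\Sigma_H$, the line $L$ is then a line of $\mathcal{H}$, so $x$ and $y$ would be collinear in $\mathcal{H}$, contradicting $\dist_\mH(x,y)=2$. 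Therefore $\dist_\mG(x,y)=2$, as required.

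I do not expect a genuine obstacle here: once the subspace property of $\Sigma_H$ from Lemma \ref{lem4.15} is in hand, the argument is forced. The only point requiring care is to keep the two inequalities separate---the ``easy'' direction $\dist_\mG \leq \dist_\mH$ holding for all distances, and the shortcut-ruling-out direction being exactly what the subspace property delivers, and only for collinearity. This is precisely why the lemma is restricted to $\dist_\mH \leq 2$: the subspace property prevents a distance-$2$ pair of $\mathcal{H}$ from collapsing to collinearity in $\mathcal{G}$, but for larger distances a genuine shortcut in $\mathcal{G}$ is a priori possible, so the full isometric embedding claimed in the surrounding discussion will presumably require the additional geometric structure (quads and convexity) developed later.
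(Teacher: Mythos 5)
Your reduction to ruling out $\dist_\mG(x,y)=1$ when $\dist_\mH(x,y)=2$ is the right shape, and the easy inequality $\dist_\mG \leq \dist_\mH$ is handled correctly. But there is a genuine gap in how you rule out collinearity: you assume from the outset that $\mathcal{H} = \mathcal{S}_H$ for some subgroup $H \cong \mathrm{J}_2{:}2$, so that you can invoke the subspace property of $\Sigma_H$ from Lemma \ref{lem4.15}. In the paper, however, $\mathcal{H}$ at this point is an \emph{arbitrary} Hall-Janko suboctagon, i.e.\ any full subgeometry of $\mathcal{G}$ isomorphic to $\HJ$; the identification of all such suboctagons with the $\mathcal{S}_H$'s is precisely the goal of the subsection and is only achieved in Lemma \ref{lem4.26}, whose proof depends on Lemma \ref{lem4.17} (through Lemmas \ref{lem4.20}, \ref{lem4.23} and \ref{lem4.25}). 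A full subgeometry need not have a point set that is a subspace of the ambient geometry, so you cannot conclude from $x,y \in \mathcal{H}$ lying on a $\mathcal{G}$-line $L$ that $L$ is a line of $\mathcal{H}$. As used here, your argument is circular within the paper's logical structure.

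The fix is cheaper than what you attempted and needs no subspace property. If $\dist_\mH(x,y)=2$, take a common neighbour $z$ of $x$ and $y$ inside $\mathcal{H}$; since $\mathcal{H}$ is a full subgeometry, $z$ is collinear in $\mathcal{G}$ with both $x$ and $y$. If $x$ and $y$ were also collinear in $\mathcal{G}$, on a line $L$, then $z \notin L$ (otherwise $L$ and the line $zx$ of $\mathcal{H}$ would share two points, forcing $y$ onto a line of $\mathcal{H}$ through $x$), and $z$ would be at distance $1$ from the two distinct points $x$ and $y$ of $L$, contradicting the uniqueness of the nearest point in (NP2). This is the triangle-free argument the paper uses, and it applies to any full subgeometry of a near polygon.
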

\begin{proof}
Obviously, this is true if $\dist_\mH(x,y) \leq 1$. So, suppose that $\dist_\mH(x,y) = 2$. Then $\dist_\mG(x,y) \leq 2$. If $\dist_\mG(x,y) = 1$ then we would get a triangle which contradicts (NP2). Therefore, $\dist_\mG(x,y) = 2$.
\end{proof}

\begin{lem} \label{lem4.18}
If $x$ and $y$ are two points of $\mathcal{H}$ such that $\dist_\mH(x,y) = \dist_\mG(x,y) = 3$ then $y \in \mO_{3b}(x)$. 
\end{lem}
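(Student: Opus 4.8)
The plan is to locate one explicit quad on which the classical point--quad machinery (Lemma~\ref{lem4.8}) pins down all distances from $x$ to the whole spread line $L_y$, and then read off the suborbit from Lemma~\ref{lem4.10}. First I would fix an $\mH$-geodesic $x=x_0,\ x_1=p,\ x_2=q,\ x_3=y$, so that $p,q\in\mH$, $x\sim p\sim q\sim y$ and $\dist_\mH(x,q)=2$. Since $\dist_\mH(x,q)=2$, Lemma~\ref{lem4.17} gives $\dist_\mG(x,q)=2$; moreover the line $qy$ lies in $\mH$, so $q$ and $y$ are collinear in $\mG$. I would then record that $q\in\mathcal{O}_{2b}(x)$: indeed, every quad through $x$ meets $\mH$ in a line through $x$ (exactly what the proof of Lemma~\ref{lem4.16} establishes), so no point of $\mathcal{O}_{2a}(x)$ can lie in $\mH$, and hence the distance-$2$ point $q\in\mH$ must lie in $\mathcal{O}_{2b}(x)$. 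By Lemma~\ref{lem4.2} this means $x$ and $q$ are \emph{not} contained in a quad, so they have a unique common neighbour; as $p$ is such a neighbour, $p$ is the only one.

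Next I would consider the unique quad $Q$ through the line $qy$ (which exists by Lemma~\ref{lem4.4}, as $qy\notin S^\ast$), and determine $\dist_\mG(x,Q)$. It cannot be $0$: if $x\in Q$ then $x$ and $y$ would both lie in the diameter-$2$ quad $Q$, forcing $\dist_\mG(x,y)\le 2$, contrary to hypothesis. The crucial step --- and the one I expect to be the main obstacle --- is to exclude $\dist_\mG(x,Q)=1$. Suppose it held. Then, $(x,Q)$ being classical (Lemma~\ref{lem4.8}), there is a unique $x^\ast\in Q$ with $x\sim x^\ast$ and $\dist_\mG(x,u)=1+\dist_\mG(x^\ast,u)$ for all $u\in Q$; taking $u=q$ gives $\dist_\mG(x^\ast,q)=1$, so $x^\ast$ is a common neighbour of $x$ and $q$. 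But $x^\ast\ne p$: the quad $Q$ meets $\mH$ exactly in the line $qy$, while $p$ lies at $\mH$-distance $1$ from $x$ whereas the three points of $qy$ lie at $\mH$-distances $2,3,3$ from $x$ (here $q$ is the $\mH$-projection of $x$ onto $qy$), so $p\notin Q$. Thus $x$ and $q$ would have two distinct common neighbours $p,x^\ast$, contradicting the uniqueness established above. Hence $\dist_\mG(x,Q)=2$, and the classical formula with $u=q$ now forces the nearest point of $Q$ to $x$ to be $q$ itself.

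Finally I would compute the distances along $L_y$. Since $y\in Q$, the spread line $L_y$ is contained in $Q$ (Corollary~\ref{co4.3}); and because $q$ and $y$ are collinear while $q\notin L_y$, inside $Q\cong W(2)$ the point $q$ is collinear with exactly one point of $L_y$, namely $y$, and lies at distance $2$ from the other two. The classical formula $\dist_\mG(x,u)=2+\dist_\mG(q,u)$ for $u\in L_y$ therefore yields $\dist_\mG(x,y)=3$ and $\dist_\mG(x,u)=4$ for the two remaining points of $L_y$. So $L_y$ meets $\mathcal{O}_4(x)$, and by Lemma~\ref{lem4.10} the only lines of $S^\ast$ meeting $\mathcal{O}_4$ are those meeting $\mathcal{O}_{3b}$ and $\mathcal{O}_4$; consequently the distance-$3$ point $y$ of $L_y$ lies in $\mathcal{O}_{3b}(x)$, which is what we wanted. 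The whole argument reduces the statement to the one delicate point of ruling out $\dist_\mG(x,Q)=1$, where the unique-common-neighbour property of the pair $(x,q)\in\mathcal{O}_{2b}$ does the real work.
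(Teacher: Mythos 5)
Your argument has a genuine circularity problem. The pivotal step is the claim that the middle point $q$ of the $\mH$-geodesic lies in $\mathcal{O}_{2b}(x)$, which you justify by asserting that every quad through $x$ meets $\mH$ in a line through $x$, this being ``exactly what the proof of Lemma~\ref{lem4.16} establishes''. But the proof of Lemma~\ref{lem4.16} establishes this only for the special suboctagons $\mathcal{S}_H$ arising from subgroups $H \cong \mathrm{J}_2{:}2$, where one knows that any two distinct commuting central involutions of $H$ are collinear in $\mathcal{S}_H$ (so that $Q \cap \Sigma_H$ is at most a line). Lemma~\ref{lem4.18} is about an \emph{arbitrary} Hall-Janko suboctagon $\mH$ --- indeed the entire point of Lemmas~\ref{lem4.17}--\ref{lem4.25} is to prove that every such $\mH$ coincides with some $\mathcal{S}_H$. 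For an arbitrary $\mH$, the facts your proof leans on (no point of $\mathcal{O}_{2a}(x)$ lies in $\mH$; the quad $Q$ through $qy$ meets $\mH$ exactly in that line, so that $p \notin Q$; and $qy \notin S^\ast$, which you need even to invoke Lemma~\ref{lem4.4}) are precisely the content of Lemmas~\ref{lem4.20}, \ref{lem4.21}, \ref{lem4.22} and \ref{lem4.24}, all of which the paper derives \emph{from} Lemma~\ref{lem4.18}. As written, your proof therefore assumes downstream consequences of the statement it is trying to prove. Note also that if $qy$ did belong to $S^\ast$ with $q \in \mathcal{O}_{2b}(x)$, Lemma~\ref{lem4.10} would place $y$ in $\mathcal{O}_{3a}(x)$, so this case cannot simply be ignored.

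For contrast, the paper's proof is a two-line counting argument that sidesteps all quad geometry: since $\mH$ is a regular near octagon with parameters $(2,4;0,3)$, there are $t_3+1=4$ lines of $\mH$ through $y$ containing a point at $\mH$-distance (hence, by Lemma~\ref{lem4.17}, $\mG$-distance) $2$ from $x$; but the suborbit diagram shows that a point of $\mathcal{O}_{3a}(x)$ lies on only $1+2=3$ lines of $\mG$ containing a point at distance $2$ from $x$. Your later steps (pinning down $\dist(x,Q)=2$ via classicality and reading off the suborbit of $y$ from Lemma~\ref{lem4.10}) are sound in themselves, but the proof cannot be completed along your lines without first giving an independent argument that, for an arbitrary Hall-Janko suboctagon, a quad cannot contain two intersecting lines of $\mH$.
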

\begin{proof}
Since $\mH$ is a regular near octagon with parameters $(2,4;0,3)$ there must be four lines of $\mH$ through $y$ that contain a point at distance $2$ from $x$ in $\mathcal{H}$ (and hence also in $\mG$ by Lemma~\ref{lem4.17}). But, by the suborbit diagram, if $y$ lies in $\mO_{3a}(x)$ then there are are only three lines through $y$ containing a point at distance 2 from $x$ in $\mathcal{G}$.
\end{proof}

\begin{lem} \label{lem4.19}
Let $Q$ be a quad of $\mathcal{G}$ and $x$, $y$ two points of $Q$ such that $\dist_\mG(x,y) = 2$. If $z$ is a point collinear with $y$ and not contained in $Q$ then $z \in\mO_{3a}(x)$.
\end{lem}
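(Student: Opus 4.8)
The plan is to first pin down the value of $\dist(x,z)$ and then read off the conclusion from Lemma~\ref{lem4.9}. Since the whole content is distance bookkeeping, the argument should be short.

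First I would determine the distance. Because $z$ is collinear with $y \in Q$ but $z \notin Q$, the point $z$ lies at distance exactly $1$ from $Q$. By Lemma~\ref{lem4.8} the point-quad pair $(z,Q)$ is classical, and the classical (nearest) point of $Q$ with respect to $z$ must be $y$, since $\dist(z,y) = 1 = \dist(z,Q)$. Classicality then gives $\dist(z,w) = 1 + \dist(y,w)$ for every $w \in Q$; taking $w = x$ yields
\[
\dist(x,z) = 1 + \dist(x,y) = 3 .
\]
So $x$ and $z$ are at distance $3$, which is the hypothesis needed to apply Lemma~\ref{lem4.9} to the pair $(x,z)$.

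Next I would exhibit the configuration that triggers that lemma. Since $y \in Q$ while $z \notin Q$ and $Q$ is a subspace, the line $yz$ meets $Q$ in exactly the point $y$ (if it met $Q$ in two points then $yz \subseteq Q$, forcing $z \in Q$, a contradiction). Thus $Q$ is a quad through $x$ meeting a line through $z$, which is precisely condition~$(3)$ of Lemma~\ref{lem4.9} written for the pair $(x,z)$. Combined with $\dist(x,z)=3$, the equivalence $(1) \Leftrightarrow (3)$ of that lemma gives $z \in \mO_{3a}(x)$, as claimed.

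I do not expect any real obstacle here: once the distance is computed via classicality, the membership in $\mO_{3a}(x)$ rather than $\mO_{3b}(x)$ is immediate from Lemma~\ref{lem4.9}. The only step that needs a moment of care is identifying $y$ as the classical projection of $z$ onto $Q$, which rests solely on $\dist(z,Q)=1$; one could instead bound $\dist(x,z)\le 3$ and exclude the value $1$ directly from the convexity of $Q$, but invoking Lemma~\ref{lem4.8} settles the distance in a single clean step.
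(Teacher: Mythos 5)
Your proof is correct and follows the same route as the paper: use Lemma \ref{lem4.8} to see that $z$ is classical with respect to $Q$ with projection $y$, deduce $\dist_\mG(x,z)=1+\dist_\mG(y,x)=3$, and then apply the equivalence $(1)\Leftrightarrow(3)$ of Lemma \ref{lem4.9} with $Q$ as the quad through $x$ meeting the line $yz$ through $z$. The paper's proof is just a two-line compression of exactly this argument.
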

\begin{proof}
As $z$ is classical with respect to $Q$, $\dist_\mG(x,z)=3$. By Lemma \ref{lem4.9}, $z \in \mO_{3a}(x)$.
\end{proof}

\begin{lem} \label{lem4.20}
A quad $Q$ of $\mathcal{G}$ cannot contain a pair of intersecting lines of $\mathcal{H}$. 
\end{lem}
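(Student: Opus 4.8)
The plan is to argue by contradiction. Suppose some quad $Q$ contains two lines $L_1, L_2$ of $\mH$ meeting in a point $p$. Since $\mH$ is a full subgeometry, $L_1$ and $L_2$ are also lines of $\mG$, and $Q \cong W(2)$ carries three lines through $p$, so this is a priori possible. Choose points $a \in L_1 \setminus \{p\}$ and $b \in L_2 \setminus \{p\}$. Because $Q$ contains no triangles, $a$ and $b$ are noncollinear, whence $\dist_Q(a,b) = 2$, and as $Q$ is convex this gives $\dist_\mG(a,b) = 2$. Likewise $a \sim p \sim b$ in $\mH$ with $a,b$ noncollinear in $\mH$ forces $\dist_\mH(a,b) = 2$, in agreement with Lemma \ref{lem4.17}.

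The decisive step is to exploit the mismatch between the number of lines through a point in $\mH$ and in $Q$. Since $\mH$ has order $(2,4)$, there are five lines of $\mH$ through $a$, whereas $Q$, having order $(2,2)$, carries only three lines through $a$. Hence at least two lines of $\mH$ through $a$ are not contained in $Q$; I fix one such line $K$. As $Q$ is a subspace, a line meeting it in two points would be contained in it, so $K \cap Q = \{a\}$, and any point $z \in K \setminus \{a\}$ satisfies $z \in \mH$, $z \sim a$ and $z \notin Q$.

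I then feed this configuration into the two suborbit lemmas to produce the contradiction. Applying Lemma \ref{lem4.19} to the pair $a, b \in Q$ at distance $2$, together with the point $z \sim a$ lying outside $Q$, yields $z \in \mO_{3a}(b)$, and in particular $\dist_\mG(b,z) = 3$. On the other hand, $z \sim a$ and $\dist_\mH(a,b) = 2$ give $\dist_\mH(b,z) \leq 3$; since $\dist_\mG(b,z) = 3$, Lemma \ref{lem4.17} rules out $\dist_\mH(b,z) \leq 2$, forcing $\dist_\mH(b,z) = \dist_\mG(b,z) = 3$. Lemma \ref{lem4.18} then places $z \in \mO_{3b}(b)$. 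As $\mO_{3a}$ and $\mO_{3b}$ are distinct suborbits, the membership $z \in \mO_{3a}(b) \cap \mO_{3b}(b)$ is impossible, which completes the argument.

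The main obstacle is the bookkeeping in the last paragraph: one must ensure that the escaping line $K$ genuinely leaves $Q$ (so that Lemma \ref{lem4.19} applies and places $z$ in $\mO_{3a}$), while simultaneously guaranteeing that $z$ and $b$ are at honest distance $3$ in $\mH$ (so that Lemma \ref{lem4.18} forces $\mO_{3b}$). The clash between these two suborbit memberships is precisely what drives the contradiction, and the only subtlety is checking that the relevant $\mH$- and $\mG$-distances coincide, which is exactly what Lemma \ref{lem4.17} supplies.
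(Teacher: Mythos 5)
Your proof is correct and follows essentially the same route as the paper: pick a point on each of the two lines away from their intersection, use the count of five $\mH$-lines versus three $Q$-lines to find a neighbour in $\mH \setminus Q$, and then derive the clash between Lemma \ref{lem4.19} (which forces membership in $\mathcal{O}_{3a}$) and Lemma \ref{lem4.18} (which forces $\mathcal{O}_{3b}$). The only difference is cosmetic — you attach the escaping line to the point on $L_1$ rather than $L_2$ and spell out the distance bookkeeping via Lemma \ref{lem4.17} where the paper simply asserts $\dist_\mH = \dist_\mG = 3$.
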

\begin{proof}
Suppose $L_1$ and $L_2$ are two intersecting lines of $\mathcal{H}$ contained in $Q$. Let $x_1 \in L_1 \setminus L_2$ and $x_2 \in L_2 \setminus L_1$. As there are five lines through $x_2$ contained in $\mH$ and only three contained in $Q$, there exists a neighbour $x_3$ of $x_2$ in $\mH \setminus Q$. For this point $x_3$, we have $\dist_\mH(x_1,x_3)=\dist_\mG(x_1,x_3)=3$. By Lemma~\ref{lem4.19} $x_3 \in \mO_{3a}(x_1)$. This contradicts Lemma~\ref{lem4.18} which would imply that $x_3 \in \mO_{3b}(x_1)$.
\end{proof}

\begin{lem} \label{lem4.21}
None of the lines of the line spread $S^\ast$ is contained in $\mH$.
\end{lem}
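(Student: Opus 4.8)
The plan is to argue by contradiction using the incompatibility between lines of $S^\ast$ (which are never contained in a quad by Corollary \ref{co4.3}, and conversely force the appearance of a quad through any other line at the same point) and the absence of quads inside a Hall-Janko suboctagon (which is what Lemma \ref{lem4.20} encodes geometrically). So suppose some line $L \in S^\ast$ is contained in $\mathcal{H}$, and fix a point $x$ on $L$. Since $S^\ast$ is a line spread consisting of pairwise disjoint lines, $L$ is the unique member of $S^\ast$ through $x$, i.e. $L = L_x$.

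The key observation is that $\mathcal{H}$ has order $(2,4)$, so there are five lines of $\mathcal{H}$ through $x$; in particular there exists at least one line $M$ of $\mathcal{H}$ through $x$ with $M \neq L = L_x$. I would then invoke Lemma \ref{lem4.4}: since $M$ is a line through $x$ distinct from $L_x$, it is contained in a unique quad $Q$ of $\mathcal{G}$, and this quad necessarily contains the spread line $L_x = L$ as well. Both $L$ and $M$ are lines of $\mathcal{H}$ (the former by the standing assumption, the latter by choice), they are distinct, and they intersect in $x$. Hence $Q$ contains a pair of intersecting lines of $\mathcal{H}$, which directly contradicts Lemma \ref{lem4.20}. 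This contradiction shows that no line of $S^\ast$ can lie in $\mathcal{H}$.

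I do not expect any genuine obstacle here: the statement follows by combining Lemma \ref{lem4.4} (a line other than $L_x$ through a point spawns a quad through $L_x$) with Lemma \ref{lem4.20} (a quad carries no two intersecting lines of $\mathcal{H}$), once one records the elementary fact $L = L_x$. The only points requiring a line of care are checking that a second line $M$ of $\mathcal{H}$ through $x$ really exists (guaranteed by the order $(2,4)$ of $\mathcal{H}$) and that both $L$ and $M$ qualify as intersecting lines of $\mathcal{H}$ contained in the single quad $Q$ produced by Lemma \ref{lem4.4}.
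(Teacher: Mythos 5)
Your proof is correct and follows essentially the same route as the paper: take a second line $M$ of $\mathcal{H}$ through a point of $L$, apply Lemma \ref{lem4.4} to get a quad containing both $L$ and $M$, and contradict Lemma \ref{lem4.20}. The only difference is that you make explicit the existence of $M$ (via the order $(2,4)$ of $\mathcal{H}$), which the paper leaves implicit.
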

\begin{proof}
Suppose $L$ is a line of $S^\ast$ contained in $\mH$, and let $M$ denote any other line of $\mH$ meeting $L$ in a point. By Lemma \ref{lem4.4}, there is a unique quad $Q$ containing $L$ and $M$. This quad would contradict Lemma \ref{lem4.20}.
\end{proof}

\begin{lem} \label{lem4.22}
Every quad which contains a point $x$ of $\mH$ contains a unique line of $\mH$ through $x$. 
\end{lem}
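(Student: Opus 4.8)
The plan is to establish a bijection between the lines of $\mathcal{H}$ through $x$ and the quads of $\mathcal{G}$ through $x$, and then to read off the statement from the fact that both sets have exactly five elements. Since $\mathcal{H}$ is isomorphic to $\HJ$, which has order $(2,4)$, there are exactly five lines of $\mathcal{H}$ through $x$; on the other hand, by Corollary \ref{co4.5} there are exactly five quads of $\mathcal{G}$ through $x$, each of them containing $L_x$. Note that a quad contains $x$ precisely when it is one of these five quads, so it suffices to prove that each of them contains exactly one line of $\mathcal{H}$ through $x$.

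First I would observe that none of the five lines of $\mathcal{H}$ through $x$ equals $L_x$: by Lemma \ref{lem4.21} no line of the spread $S^\ast$ lies in $\mathcal{H}$, while $L_x \in S^\ast$. Hence each line $M$ of $\mathcal{H}$ through $x$ is a line of $\mathcal{G}$ through $x$ distinct from $L_x$, and so by Lemma \ref{lem4.4} it is contained in a unique quad, which is necessarily one of the five quads through $x$. This defines a map $\varphi$ from the set of lines of $\mathcal{H}$ through $x$ to the set of quads through $x$.

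The key step is to show that $\varphi$ is injective. If $M_1$ and $M_2$ were two distinct lines of $\mathcal{H}$ through $x$ with $\varphi(M_1)=\varphi(M_2)=Q$, then $Q$ would contain the two intersecting lines $M_1,M_2$ of $\mathcal{H}$, contradicting Lemma \ref{lem4.20}. Hence $\varphi$ is injective, and since its domain and codomain both have cardinality five, it is a bijection. Consequently every quad through $x$ is the image of exactly one line of $\mathcal{H}$ through $x$, that is, it contains a unique line of $\mathcal{H}$ through $x$.

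I do not expect a genuine obstacle here: the argument is essentially a pigeonhole count. The only point to handle with care is that Lemma \ref{lem4.20} supplies injectivity of $\varphi$ (no quad carries two $\mathcal{H}$-lines through $x$), while the equality of the two counts upgrades injectivity to surjectivity, so that the \emph{existence} of such a line in every quad comes for free rather than requiring a separate construction.
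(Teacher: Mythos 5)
Your proof is correct and follows essentially the same argument as the paper: both use Lemma \ref{lem4.21} to exclude $L_x$, Lemma \ref{lem4.20} to show no quad carries two $\mathcal{H}$-lines through $x$, and the pigeonhole count of five lines versus five quads to conclude that each quad contains exactly one. The paper's version is just a more condensed statement of the same bijection.
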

\begin{proof}
By Lemma \ref{lem4.21}, the line $L_x$ is not contained in $\mH$. There are five lines through $x$ contained in $\mH$. By Lemma \ref{lem4.20}, each of the five quads through $L_x$ contains at most one and hence precisely one of these five lines.
\end{proof}

\begin{lem} \label{lem4.23}
$\mathcal{H}$ is isometrically embedded into $\mathcal{G}$.
\end{lem}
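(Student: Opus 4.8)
I need to show $\mathcal{H}$ is isometrically embedded, i.e. $\dist_\mH(x,y)=\dist_\mG(x,y)$ for all points $x,y$ of $\mathcal{H}$. Since $\mathcal{H}\cong\HJ$ is a near octagon of diameter $4$ and so is $\mathcal{G}$, I only need to handle the distances $3$ and $4$, as distances $0$, $1$, $2$ are already settled by Lemma~\ref{lem4.17}. The plan is to treat the two remaining distance values separately, using the accumulated structural lemmas (especially Lemmas~\ref{lem4.18}, \ref{lem4.20}, \ref{lem4.22} and the classicality of point-quad pairs from Lemma~\ref{lem4.8}).

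**Distance $3$.** First I would show that if $\dist_\mH(x,y)=3$ then $\dist_\mG(x,y)=3$ as well. Suppose for contradiction that $\dist_\mG(x,y)<3$. Since $\dist_\mH(x,y)=3$, Lemma~\ref{lem4.17} (in its contrapositive, applied to a point at $\mathcal{H}$-distance $\le 2$) rules out $\dist_\mG(x,y)\le 2$ once I know there is no ``shortcut'': concretely, a path of $\mathcal{G}$-length $\le 2$ between $x,y$ would force, via (NP2) and the fact that $\mathcal{H}$ is a full subgeometry, a point $z\in\mathcal{H}$ with $\dist_\mH(x,z)+\dist_\mH(z,y)<3$, contradicting $\dist_\mH(x,y)=3$. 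More carefully, because $\mathcal{H}$ is full and $\mathcal{P}'=\Sigma_H$ is a subspace, any geodesic in $\mathcal{G}$ whose endpoints lie in $\mathcal{H}$ and which passes through a point $z$ collinear (in $\mathcal{G}$) with both neighbours would produce a triangle or a $\mathcal{G}$-geodesic of length $2$, and Lemma~\ref{lem4.17} gives $\dist_\mH=\dist_\mG$ on the two sub-segments, yielding $\dist_\mH(x,y)\le 2$. Hence $\dist_\mG(x,y)=3$.

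**Distance $4$.** Now suppose $\dist_\mH(x,y)=4$. Then $\dist_\mG(x,y)\le 4$, and I must exclude $\dist_\mG(x,y)\in\{1,2,3\}$. The cases $\le 2$ are dispatched exactly as above. To exclude $\dist_\mG(x,y)=3$, I would argue as follows. Pick a point $w$ on an $\mathcal{H}$-geodesic from $x$ to $y$ with $\dist_\mH(x,w)=3$ and $\dist_\mH(w,y)=1$. By the distance-$3$ case just proved, $\dist_\mG(x,w)=3$, and by Lemma~\ref{lem4.18} we have $w\in\mathcal{O}_{3b}(x)$. Suppose toward a contradiction that $\dist_\mG(x,y)=3$; then on the line $wy$ of $\mathcal{H}$ the point nearest to $x$ in $\mathcal{G}$ is unique by (NP2), and reading the suborbit diagram at $x$ tells me how the suborbits $\mathcal{O}_{3a},\mathcal{O}_{3b},\mathcal{O}_4$ distribute along a line meeting $\mathcal{O}_{3b}$: by Lemma~\ref{lem4.10}(4) the only lines of $S^\ast$ meeting $\mathcal{O}_{3b}$ also meet $\mathcal{O}_4$, but $wy\notin S^\ast$ (Lemma~\ref{lem4.21}), so I instead use the diagram to see that a non-spread line through a point of $\mathcal{O}_{3b}$ either stays in $\mathcal{O}_{3}$ or goes to $\mathcal{O}_4$. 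I would show that $\dist_\mG(x,y)=3$ forces $y$ to sit in a suborbit configuration incompatible with the regularity parameters $(2,4;0,3)$ of $\mathcal{H}$: counting, via Lemma~\ref{lem4.22}, the five lines of $\mathcal{H}$ through $y$ and comparing with the $\mathcal{G}$-suborbit of each neighbour (using Lemma~\ref{lem4.18} applied with roles of $x,y$ suitably chosen) produces more lines toward $\mathcal{O}_{2\ast}(x)$ than the three permitted by the diagram when $y\in\mathcal{O}_{3a}(x)$, while $y\in\mathcal{O}_{3b}(x)$ would (by the distance-$3$ case) force $\dist_\mH(x,y)=3$, a contradiction. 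Hence $\dist_\mG(x,y)=4$.

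**Main obstacle.** The delicate point is the distance-$4$ argument excluding $\dist_\mG(x,y)=3$: it is not purely formal and genuinely requires the suborbit-diagram bookkeeping together with the regularity of $\HJ$. The cleanest route is to phrase everything in terms of the induced map $x\mapsto L_x$ into $(S^\ast,\mathcal{Q}^\ast)\cong\mathrm{H}(4)^D$ and invoke Corollary~\ref{co4.12}: the lines $L_x$ for $x\in\mathcal{H}$ form a full sub-configuration of the dual split Cayley hexagon (this is essentially the embedding of Lemma~\ref{lem4.16}), and if that sub-configuration is isometrically embedded in $\mathrm{H}(4)^D$, then Corollary~\ref{co4.12} transports isometry back to $\mathcal{G}$. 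I would therefore reduce the whole statement to showing that $\{L_x : x\in\mathcal{H}\}$, together with the quads meeting $\mathcal{H}$, is isometrically embedded in $(S^\ast,\mathcal{Q}^\ast)$, and then the distance discrepancies at $3$ and $4$ translate into a hexagon-distance computation, which is where I expect the real content to lie.
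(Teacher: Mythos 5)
Your overall decomposition (settle distances $3$ and $4$ separately, with Lemma~\ref{lem4.17} handling distances $\leq 2$) matches the paper's, but both of your case arguments have genuine gaps. In the distance-$3$ case you assert that a $\mathcal{G}$-path of length $\leq 2$ between two points of $\mathcal{H}$ would ``force a point $z \in \mathcal{H}$'' on it; that is precisely what has to be proved, since a $\mathcal{G}$-geodesic between points of $\mathcal{H}$ has no reason to stay inside $\mathcal{H}$, and Lemma~\ref{lem4.17} cannot be applied to sub-segments whose interior vertices are not known to lie in $\mathcal{H}$. The paper's actual mechanism is different: it takes a discrepant pair $(x,y)$ with $i=\dist_\mH(x,y)$ minimal and uses (NP2) on the $\mathcal{H}$-line $yy'$ (where $\dist_\mH(x,y')=i-1$, hence $\dist_\mG(x,y')=i-1$ by minimality) to reduce to the case $\dist_\mG(x,y)=i-2$; then for $i=3$ the points $x$ and $z_2$ (the penultimate point of an $\mathcal{H}$-geodesic) have two common $\mathcal{G}$-neighbours $z_1$ and $y$, so they lie in a quad containing the intersecting $\mathcal{H}$-lines $xz_1$ and $z_1z_2$, contradicting Lemma~\ref{lem4.20}. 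Nothing in your sketch produces that quad, which is where Lemma~\ref{lem4.20} actually enters.

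In the distance-$4$ case the same reduction shows that the only case to eliminate is $\dist_\mG(x,y)=2$ --- exactly the case you dismiss as ``dispatched as above'' using the flawed distance-$3$ reasoning. The paper handles it by taking a common $\mathcal{G}$-neighbour $y'$ of $x$ and $y$, the quad $Q$ through $xy'$ (Lemma~\ref{lem4.4}), the $\mathcal{H}$-line $M \subseteq Q$ through $x$ (Lemma~\ref{lem4.22}), and the point $x' \in M$ with $\dist_\mH(y,x')=3$; minimality gives $\dist_\mG(y,x')=3$, whence $y \in \mathcal{O}_{3b}(x')$ by Lemma~\ref{lem4.18}, while Lemma~\ref{lem4.19} applied inside $Q$ forces $y \in \mathcal{O}_{3a}(x')$, a contradiction. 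Your suborbit bookkeeping for excluding $\dist_\mG(x,y)=3$ is left as an intention rather than an argument, and your proposed fallback via Corollary~\ref{co4.12} does not close the gap either: that corollary computes $\dist_\mG$, not $\dist_\mH$, and since $\mathcal{H}$ contains no spread lines (Lemma~\ref{lem4.21}), the configuration $\{L_x : x \in \mathcal{H}\}$ together with the quads meeting $\mathcal{H}$ is an embedded copy of the point-line \emph{dual} of $\HJ$ (Lemma~\ref{lem4.16}), whose distances are not those of $\HJ$; so even an isometric embedding of that configuration would not yield the statement.
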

\begin{proof}
Suppose $\dist_\mG(x,y) \not= \dist_\mH(x,y)$ for certain points $x$ and $y$ of $\mathcal{H}$, and suppose $x$ and $y$ have been chosen in such a way that $i := \dist_\mH(x,y)$ is as small as possible. By Lemma \ref{lem4.17}, $i \in \{ 3,4 \}$. Let $y'$ be a point in $\mH$ such that $\dist_\mH(x,y')=i-1$, $\dist_\mH(y',y)=1$ and let $y''$ denote the third point on the line $yy'$. By (NP2) we know that $\dist_\mH(x,y'') = \dist_\mH(x,y) = i$. By the minimality of $i$, $\dist_\mG(x,y') = i - 1$. By (NP2), $\{ \dist_\mG(x,y),\dist_\mG(x,y'') \} = \{ i-1,i-2 \}$. So, still under the assumption that the distance $\dist_\mH(x,y)$ is as small as possible, we could have chosen $y$ in such a way that $\dist_\mG(x,y) = \dist_\mH(x,y)-2$.

Suppose $i=3$. Then we can choose $x,y \in \mH$ such that $\dist_\mH(x,y) = 3$ and $\dist_\mG(x,y) = 1$. Let $x$, $z_1$, $z_2$, $y$ be a shortest path between $x$ and $y$ in $\mH$. By Lemma~\ref{lem4.17}, $\dist_\mG(x,z_2) = 2$. Since $x$ and $z_2$ have at least two common neighbours in $\mG$ (namely $y$ and $z_1$), there exists a quad $Q$ containing $x$, $z_2$ and all their common neighbours. The quad $Q$ would then contain the intersecting lines $xz_1$ and $z_1z_2$, which is in contradiction with Lemma~\ref{lem4.20}. 

Therefore $i=4$. Again, we can choose points $x,y \in \mH$ such that $\dist_\mH(x,y) = 4$ and $\dist_\mG(x,y) = 2$. Let $y'$ be a common neighbour of $x$ and $y$ in $\mG$. By Lemma~\ref{lem4.4} there exists a quad $Q$ through the line $xy'$. By Lemma~\ref{lem4.22} $Q$ must contain a line $M$ of $\mH$ through $x$. Let $x'$ be the unique point on $M$ satisfying $\dist_\mH(y,x') = 3$. Since $i=4$, we also have $\dist_\mG(y,x') = 3$. So, $y \in \mathcal{O}_{3b}(x')$ by Lemma \ref{lem4.18}. Since $\dist_\mG(y,x') = 3$, the quad $Q$ cannot contain the point $y$. Lemma \ref{lem4.19} (with $x$, $y$ and $z$ replaced by $x'$, $y'$ and $y$) would then imply that $y \in \mathcal{O}_{3a}(x')$, which is in contradiction with the earlier claim that $y \in \mathcal{O}_{3b}(x')$.
\end{proof}

\begin{lem} \label{lem4.24}
If $x$ and $y$ are two points of $\mathcal{H}$ such that $\dist_\mH(x,y) = 2$ then $y \in \mO_{2b}(x)$.
\end{lem}
\begin{proof}
We also have $\dist_\mG(x,y)=2$. Let $x' \in \mH$ be a common neighbour of $x$ and $y$. If $y \in \mO_{2a}(x)$, then the unique quad through $x$ and $y$ would contain the intersecting lines $xx'$ and $x'y$, which would be in violation with Lemma \ref{lem4.20}. Therefore, $y \in \mO_{2b}(x)$.
\end{proof}

\begin{lem} \label{lem4.25}
Through every pair of opposite points of $\mG$ there is at most one Hall-Janko suboctagon. 
\end{lem}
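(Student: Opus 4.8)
The plan is to show that the point set of $\mathcal{H}$ is completely determined by the opposite pair $x,y$. Since $\mathcal{H}$ is isometrically embedded (Lemma~\ref{lem4.23}), its point set is a subspace of $\mathcal{G}$ and its lines are exactly the lines of $\mathcal{G}$ lying inside that set, so recovering the points recovers $\mathcal{H}$. Being opposite means $\dist_\mG(x,y)=4$, and by isometry $x,y$ are opposite in $\mathcal{H}\cong\HJ$ as well. Throughout I would exploit the rigid local picture already available: $\mathcal{H}$ contains exactly one point of each spread line (Lemma~\ref{lem4.21}), and each of the five quads through $L_z$ (Corollary~\ref{co4.5}) meets $\mathcal{H}$ in a single line through $z$ whenever $z\in\mathcal{H}$ (Lemmas~\ref{lem4.20} and \ref{lem4.22}). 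Thus knowing $\mathcal{H}$ amounts to choosing, at every point $z\in\mathcal{H}$, one of the two non-spread lines through $z$ inside each quad on $L_z$.

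The engine of the reconstruction is a distance-$2$ rigidity statement: if $u,v\in\mathcal{H}$ with $\dist_\mH(u,v)=2$, then $v\in\mO_{2b}(u)$ by Lemma~\ref{lem4.24}, so by Lemma~\ref{lem4.2} the pair $u,v$ lies in no quad and hence has a unique common neighbour in $\mathcal{G}$; this neighbour is the midpoint of the $\mathcal{H}$-geodesic and is therefore forced to lie in $\mathcal{H}$. Consequently $\mathcal{H}$ is closed under, and can be filled in by, the operation sending a distance-$2$ pair to its unique common neighbour, so it suffices to pin down a generating configuration. I would set this up by induction on $\dist_\mG(\cdot,x)$, using the opposite point $y$ as a beacon: every quad $Q$ through a reconstructed point is oriented by the unique point $\pi_Q(y)$ of $Q$ nearest to $y$ (Lemma~\ref{lem4.8}), which sits at distance $2$ from both $x$ and $y$, and the admissible suborbits of $\mathcal{H}$-points are constrained to the branch $\mO_{1b},\mO_{2b},\mO_{3b},\mO_4$ by Lemmas~\ref{lem4.18} and \ref{lem4.24}. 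Equivalently, one argues by minimal counterexample: if two suboctagons through $x,y$ first disagree at a quad $Q$ on some $L_z$ with $z$ at least distance from $x$, one propagates the discrepancy toward $y$ and contradicts the distance/suborbit constraints (Lemmas~\ref{lem4.18}, \ref{lem4.19}) together with isometry.

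The hard part is breaking the binary choice at the seed, i.e.\ determining the five $\mathcal{H}$-lines through $x$ (equivalently the ten neighbours of $x$). This step is genuinely non-local: inside a quad $Q$ on $L_x$ the two candidate non-spread lines through $x$ have \emph{identical} distance profiles to $y$ --- each meets the common neighbours of $x$ and $\pi_Q(y)$ in exactly one point, and the two candidates are interchanged by a symmetry fixing $x$, $Q$ and $\pi_Q(y)$ --- so no single-quad computation can separate them. The resolution must come from the fact that the choices in the five quads on $L_x$, and the choices made at successive distances, are not independent: they have to assemble into one isometrically embedded copy of $\HJ$ that actually reaches the prescribed opposite point $y$. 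Showing that this global compatibility admits a unique solution through $(x,y)$ is where the real work lies; once the seed is fixed, the distance-$2$ engine together with the quad structure propagates it uniquely to all of $\mathcal{H}$, yielding at most one suboctagon through $x$ and $y$.
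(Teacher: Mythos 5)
Your proposal does not close the argument: you explicitly leave the decisive step --- ``breaking the binary choice at the seed'' --- unresolved, and you correctly diagnose that it cannot be resolved by the local data you set up. So there is a genuine gap, and it is located exactly where you say it is. The way out, which the paper takes, is to seed the reconstruction at the \emph{opposite} point $y$ rather than at $x$, where the symmetry you describe does not arise. Work with the suborbits relative to $x$. Since $\mH$ is regular with parameters $(2,4;0,3)$ and $y$ is opposite to $x$ in $\mH$, all five $\mH$-lines through $y$ contain a point at distance $3$ from $x$, and by Lemma~\ref{lem4.18} each such point lies in $\mO_{3b}(x)$. The suborbit diagram shows there are exactly six lines through a point of $\mO_4$ meeting $\mO_{3b}$, and by Lemma~\ref{lem4.10} one of them is the spread line $L_y$, which is excluded by Lemma~\ref{lem4.21}. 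Five needed, six candidates, one forbidden: the five $\mH$-lines through $y$ are forced by $x$ and $y$ alone, with no appeal to any global compatibility. The paper then propagates back toward $x$ step by step (at each stage the suborbit diagram plus Lemmas~\ref{lem4.10}, \ref{lem4.22} and \ref{lem4.24} again leave exactly as many admissible lines as $\mH$ requires), showing that every geodesic from $x$ to $y$ in $\mH$, and every point of $\Gamma_4(x)\cap\mH$ adjacent to $y$, is determined; the final globalization uses the connectivity of the graph induced on $\Gamma_4(x)\cap\mH$ (Step 1 of the proof of Theorem 3 in \cite{Co-Ti}), a fact your sketch also lacks.

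Two smaller points. Your ``distance-$2$ engine'' (a distance-$2$ pair of $\mH$ lies in $\mO_{2b}$ of each other by Lemma~\ref{lem4.24}, hence has a unique common neighbour in $\mG$, which must lie in $\mH$) is correct and is a reasonable closure operator, but you never exhibit a generating configuration for it, so it does not by itself yield uniqueness. And your inductive scheme ``on $\dist_\mG(\cdot,x)$'' runs in the wrong direction: the forcing information flows from $y$ inward, because it is only at points far from $x$ that the suborbit labels relative to $x$ discriminate between the candidate lines.
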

\begin{proof}
Let $x$ and $y$ be two opposite points of $\mG$ and suppose the Hall-Janko octagon $\mH$ contains $x$ and $y$. We will show that $\mH$ is uniquely determined by $x$ and $y$. In this proof all suborbits are considered with respect to the point $x$. By Lemma \ref{lem4.23}, the distance between two points of $\mH$ is the same in the geometries $\mathcal{H}$ and $\mathcal{G}$. 

There are five lines through $y$ inside $\mH$ that contain a point at distance $3$ from $x$. By Lemma~\ref{lem4.18} all of these lines must intersect $\mO_{3b}$. By the suborbit diagram and Lemma \ref{lem4.10}, there are exactly six such lines through $y$ and one of them is in $S^\ast$. By Lemma \ref{lem4.21}, the line belonging to $S^\ast$ cannot be contained in $\mH$. Therefore the five lines of $\mH$ through $y$, going back to $x$ are uniquely determined by $x$ and $y$. 

Now let $y' \in \mO_{3b}$ be a point on one of these five lines and $Q$ the unique quad through $yy'$ and $L_{y'} \not= yy'$. By Lemma \ref{lem4.10}, $L_{y'}$ meets $\mathcal{O}_4$. By Lemma~\ref{lem4.22} the third line of $Q$ through $y'$, call it $M_{y'}$, doesn't lie in $\mH$. We claim that $M_{y'}$ intersects $\mO_{2b}$. Indeed, as the point $x$ is classical with respect to $Q$, the unique point $u$ in $Q$ nearest to $x$ lies at distance 2 from $x$ and is collinear with $y'$. Therefore, $u  \in \mathcal{O}_{2b}$ and $M_{y'} = y'u$. The four lines of $\mH$ through $y'$ that go back to $x$ are now uniquely determined. Indeed, by Lemma \ref{lem4.24}, each of the four lines of $\mH$ through $y'$ meets $\mathcal{O}_{2b}$. But by the suborbit diagram, there are precisely five such lines. Moreover, one of these five lines is the line $M_{y'}$ and we already know that it cannot be a line of $\mH$.

Now, let $y'' \in \mO_{2b}$ be a point on one of these four lines. By the suborbit diagram there is a unique line through $y''$ containing a point $y'''$ in $\mO_{1b}$, which must necessarily be in $\mH$. Moreover, there is a unique line through $y'''$ that contains $x$. 

So far, we have proved that given any point $y$ in $\mH$ with $\dist_\mH(x,y) = 4$, all shortest paths between $x$ and $y$ in $\mH$ are uniquely determined by $x$ and $y$. Moreover, all points at distance $4$ from $x$ that are collinear with $y$ are uniquely determined. These properties in fact imply that the whole of $\mathcal{H}$ is uniquely determined. Indeed, the subgraph of the collinearity graph induced on the set $\Gamma_4(x) \cap \mathcal{H}$ is connected (see Step 1 of the proof of Theorem 3 in \cite{Co-Ti}), and every shortest path between $x$ and a point of $\mathcal{H}$ can be extended to a shortest path between $x$ and a point of $\Gamma_4(x) \cap \mathcal{H}$.
\end{proof}

\begin{lem}\label{lem4.26}
There are precisely $416$ Hall-Janko suboctagons of $\mG$, namely the suboctagons $\mathcal{S}_H$ for maximal subgroups $H \cong \mathrm{J}_2{:}2$ of $G = \mathrm{G}_2(4){:}2$.  Through every pair of opposite points of $\mathcal{G}$, there is precisely one Hall-Janko suboctagon.
\end{lem}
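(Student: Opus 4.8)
The plan is to prove the two assertions of Lemma~\ref{lem4.26} — that there are exactly $416$ Hall-Janko suboctagons, all of the form $\mathcal{S}_H$, and that each pair of opposite points lies in exactly one of them — by a counting argument that combines the lower bound coming from Lemma~\ref{lem4.15} with the upper bound coming from Lemma~\ref{lem4.25}.

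First I would establish the lower bound. By Lemma~\ref{lem4.15}, the $416$ maximal subgroups $H \cong \mathrm{J}_2{:}2$ of $G$ yield $416$ \emph{distinct} Hall-Janko suboctagons $\mathcal{S}_H$. So there are \emph{at least} $416$ such suboctagons, and it remains to show there are at most $416$ and that each pair of opposite points lies in at least one.

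The core of the argument is a double-counting of incident pairs $(\{x,y\}, \mathcal{H})$ where $x,y$ are opposite points of $\mathcal{G}$ and $\mathcal{H}$ is a Hall-Janko suboctagon containing both. I would count this in two ways. On one side, I fix a suboctagon $\mathcal{H} \cong \HJ$: since $\mathcal{H}$ is isometrically embedded (Lemma~\ref{lem4.23}) and $\HJ$ is a regular near octagon with parameters $(2,4;0,3)$, the number of ordered (or unordered) pairs of points at distance $4$ inside $\mathcal{H}$ is a fixed constant $N$, computable from the standard intersection array of $\HJ$ (it has $315$ points, and the size of $\Gamma_4(x)\cap\mathcal{H}$ is determined by the parameters). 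On the other side, I fix a pair $\{x,y\}$ of opposite points of $\mathcal{G}$: by Lemma~\ref{lem4.25} there is \emph{at most one} suboctagon through them. Writing $n$ for the total number of Hall-Janko suboctagons and $P$ for the number of opposite point-pairs of $\mathcal{G}$ that are contained in some suboctagon, the double count gives $n \cdot N = P \le (\text{total number of opposite pairs in } \mathcal{G})$. The $416$ subgroup-suboctagons already contribute $416 \cdot N$ incident pairs, and because these $416$ suboctagons are pairwise distinct while each opposite pair supports at most one suboctagon, the pairs they contribute are all distinct; hence the total number of opposite pairs lying in \emph{some} suboctagon is exactly $416\cdot N$ if and only if every suboctagon is one of the $\mathcal{S}_H$. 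The cleanest route is therefore: compute $N$ and the number of opposite pairs of $\mathcal{G}$ (using $|\mathcal{O}_4|=2048$ per point, so the number of ordered opposite pairs is $4095\cdot 2048$), and verify that $416\cdot N$ equals exactly this total, forcing every opposite pair of $\mathcal{G}$ to lie in one of the $\mathcal{S}_H$ and leaving no room for a further suboctagon.

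The main obstacle I anticipate is closing the upper bound \emph{cleanly}: Lemma~\ref{lem4.25} gives uniqueness through opposite pairs, but to conclude that the $416$ algebraic suboctagons exhaust all suboctagons I must know that every Hall-Janko suboctagon $\mathcal{H}$ actually \emph{contains} a pair of opposite points of $\mathcal{G}$, and that conversely the $416\cdot N$ opposite pairs covered by the $\mathcal{S}_H$ saturate all of $\Gamma_4$. The first point follows because opposite points of $\mathcal{H}$ are at $\mathcal{G}$-distance $4$ by isometric embedding; the delicate part is the exact arithmetic showing $416 \cdot N = 4095 \cdot 2048 / (\text{pairs per suboctagon normalization})$, i.e.\ that the covering is perfect with no overlaps and no deficiency. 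Once that single numerical identity checks out, uniqueness (Lemma~\ref{lem4.25}) upgrades ``at least one'' to ``exactly one'' for every opposite pair, any hypothetical extra suboctagon would force a second suboctagon through some already-covered opposite pair (a contradiction), and the count of $416$ is pinned down.
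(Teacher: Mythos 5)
Your proposal is correct and is essentially the paper's own argument: the paper counts ordered opposite pairs ($315\cdot 64$ per Hall--Janko suboctagon versus $4095\cdot 2048$ in $\mathcal{G}$), uses Lemma~\ref{lem4.25} to get the upper bound $(4095\cdot 2048)/(315\cdot 64)=416$, and Lemma~\ref{lem4.15} for the matching lower bound, with equality then forcing exactly one suboctagon through each opposite pair. Your extra care about isometric embedding (Lemma~\ref{lem4.23}) guaranteeing that $\mathcal{H}$-opposite points are $\mathcal{G}$-opposite is exactly the implicit step the paper relies on.
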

\begin{proof}
A Hall-Janko suboctagon has $315 \cdot 64$ ordered pairs of opposite points while $\mG$ has $4095 \cdot 2048$ such pairs. Therefore by Lemma~\ref{lem4.25}, there are at most $(4095 \cdot 2048)/(315 \cdot 64) = 416$ Hall-Janko suboctagons in $\mG$. By Lemma~\ref{lem4.15} there are at least that many. 
\end{proof}

\medskip \noindent Now that we have classified all Hall-Janko sub near octagons of $\mathcal{G}$, we end this section with proving some extra properties of these Hall-Janko suboctagons. 

\begin{lem} \label{lem4.27}
If $\mH$ is a Hall-Janko suboctagon of $\mG$ and  $x$ a point not contained in $\mH$ then there is a unique point $x'$ in $\mH$ that is collinear with $x$. 
\end{lem}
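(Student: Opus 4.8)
The statement is essentially a "near-polygon fan" or covering-type property: a Hall-Janko suboctagon $\mH$, viewed as a substructure of $\mG$, behaves like a classical/geodetically-well-situated object so that every external point projects onto a unique point of $\mH$. The plan is to exploit that $\mH$ is isometrically embedded (Lemma \ref{lem4.23}), is a regular near octagon with parameters $(2,4;0,3)$, and that $\mG$ has order $(2,10)$ with all point-quad pairs classical (Lemma \ref{lem4.8}).

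First I would count. A point $x'$ of $\mH$ collinear with an external point $x$ exists precisely when $\dist_\mG(x,\mH)=1$, and uniqueness is what must be ruled impossible-to-violate. The natural strategy is a double count of flags $(x,x')$ with $x \notin \mH$, $x' \in \mH$, and $x \sim x'$ in $\mG$, combined with a sharp control of $\Gamma_1(\mH)$ versus $\Gamma_2(\mH)$. Concretely, $\mH$ has $315$ points, and through each point $x'$ of $\mH$ there are $11$ lines of $\mG$, of which exactly $5$ lie in $\mH$ (the order of $\mH$ is $(2,4)$) and $6$ do not; the line $L_{x'}\in S^\ast$ is one of those $6$ by Lemma \ref{lem4.21}. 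Each of the $6$ external lines contributes two points of $\mG\setminus\mH$ collinear with $x'$, giving $315 \cdot 6 \cdot 2 = 3780$ external neighbours counted with multiplicity. Since $|\mP|=4095$ and $|\mH|=315$, there are $4095-315=3780$ points outside $\mH$. So if I can show that no external point is counted twice here, every external point has exactly one neighbour in $\mH$, which is precisely the claim. Thus the real content is: \emph{no point $x\notin\mH$ is collinear with two distinct points of $\mH$.}

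For that uniqueness step I would argue by contradiction: suppose $x\notin\mH$ is collinear with two distinct points $x_1,x_2\in\mH$. Then $x_1,x_2$ are at distance $2$ in $\mG$ (they cannot be collinear, else $x,x_1,x_2$ would force a quad containing two intersecting lines of $\mH$ once one analyses the configuration, or one uses isometric embedding to pin $\dist_\mH(x_1,x_2)=2$). By Lemma \ref{lem4.23}, $\dist_\mH(x_1,x_2)=2$, so by Lemma \ref{lem4.24} we have $x_2\in\mO_{2b}(x_1)$, meaning $x_1,x_2$ lie in \emph{no} quad of $\mG$. On the other hand $x$ is a common neighbour of $x_1$ and $x_2$ in $\mG$; if $x_1,x_2$ had a second common neighbour they would be contained in a quad, contradicting $x_2\in\mO_{2b}(x_1)$. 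But $\mH$ itself, being a near octagon, supplies a common neighbour $w\in\mH$ of $x_1,x_2$ (the midpoint of a geodesic), and $w\neq x$ since $x\notin\mH$. This gives two distinct common neighbours $x,w$, forcing the forbidden quad and yielding the contradiction.

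The step I expect to be the main obstacle is establishing cleanly that the two hypothesised neighbours $x_1,x_2$ really are at distance $2$ in $\mG$ (and hence in $\mH$) and that the external common neighbour $x$ is genuinely distinct from every common neighbour lying inside $\mH$, so that the pair $(x_1,x_2)$ acquires two common neighbours and thus a quad. Once $x_2\in\mO_{2b}(x_1)$ is in force, the "at most one common neighbour" consequence of the $\mO_{2b}$ relation (no quad) collides directly with the existence of both the internal midpoint in $\mH$ and the external point $x$. I would therefore handle the distance bookkeeping first, invoking Lemmas \ref{lem4.17}, \ref{lem4.20}, \ref{lem4.23} and \ref{lem4.24} to immobilise all the distances, and only then run the quad/common-neighbour contradiction; the counting argument of the previous paragraph then upgrades "at most one" to "exactly one."
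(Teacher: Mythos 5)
Your proposal is correct and follows essentially the same route as the paper: uniqueness by showing two neighbours $x_1,x_2\in\mH$ of an external point would lie at distance $2$ with a second common neighbour inside $\mH$, forcing a forbidden quad, and existence by the count $315\cdot 6\cdot 2=4095-315$. The only cosmetic difference is that you reach the contradiction via Lemma \ref{lem4.24} (the $\mathcal{O}_{2b}$ relation excludes a quad), whereas the paper invokes Lemma \ref{lem4.20} directly (the quad would contain two intersecting lines of $\mH$); these are equivalent, since Lemma \ref{lem4.24} is itself a consequence of Lemma \ref{lem4.20}.
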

\begin{proof}
Let $\mH$ be a Hall-Janko suboctagon of $\mG$ and $x$ a point not contained in $\mH$. Say $x$ has two neighbours $y$, $z$ in $\mH$. Then by Lemma~\ref{lem4.23} $\dist_\mH(y,z) = 2$ and hence there is a common neighbour of $y$, $z$ inside $\mH$. This means that there is a quad through $y$, $z$ whose intersection with $\mH$ contains a pair of intersecting lines, contradicting Lemma~\ref{lem4.20}. Therefore, if $x$ has a neighbour in $\mH$ then it is unique. 

Now we can show that $x$ has a neighbour in $\mH$ by a simple counting. There are six lines out of the eleven through each point in $\mH$ that are not contained in $\mH$, giving us a total of $12 \cdot 315$ points of $\mG$ at distance $1$ from $\mH$, as they all must be distinct. Adding this to the number of points in $\mH$ we get $315 \cdot 12 + 315 = 4095$ which is the total number of points in $\mG$. 
\end{proof}

\medskip \noindent For a Hall-Janko suboctagon $\mH$ and a point $x$ of $\mG$ we define the projection of $x$ onto $\mH$, $\pi_\mH(x)$, to be $x$ if $x \in \mH$ and the unique point $x' \in \mH$ collinear with $x$ if $x \notin \mH$.

\begin{lem} \label{lem4.28}
Let $\mH$ be a Hall-Janko suboctagon of $\mG$ and $x$, $y$ be two distinct points not contained in $\mH$ such that $\pi_\mH(x) = \pi_\mH(y)$. Then $\mH \cap \G_4(x) \neq \mH \cap \G_4(y)$.
\end{lem}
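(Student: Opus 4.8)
The plan is to use the common projection point $x' := \pi_\mH(x) = \pi_\mH(y)$ as an anchor and to show that the sets $\mH \cap \G_4(x)$ and $\mH \cap \G_4(y)$ are forced to differ by examining how $x$ and $y$ sit relative to the line $x'x = L$ and $x'y = M$ through $x'$. First I would record that since $x,y \notin \mH$ and $x'$ is the unique neighbour of each in $\mH$, the points $x$ and $x'$ are collinear, as are $y$ and $x'$; moreover $x$ and $y$ have $x'$ as their projection, so neither lies in $\mH$ but both are at distance $1$ from the single point $x'$. A key preliminary observation is that for any point $z$ of $\mG$, the map $z \mapsto \pi_\mH(z)$ is a ``nearest-point'' projection onto $\mH$ (by Lemma~\ref{lem4.27} and the classical nature of point-quad pairs from Lemma~\ref{lem4.8}), so distances from $x$ (or $y$) to points of $\mH$ are governed by distances from $x'$ inside $\mH$ shifted by one.

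The main step is a distance computation inside $\mH$ viewed through the regular near octagon structure with parameters $(2,4;0,3)$. For a point $w \in \mH$ I would compute $\dist_\mG(x,w)$ in terms of $\dist_\mH(x',w)$. Because $x$ is collinear with $x'$ and $x \notin \mH$, I expect (using isometric embedding, Lemma~\ref{lem4.23}, together with the quad and suborbit analysis) that $\dist_\mG(x,w) = \dist_\mH(x',w) + 1$ unless $w$ lies on certain lines through $x'$ that interact with the line $L = x'x$, in which case a shortcut drops the distance. Concretely, $w \in \mH \cap \G_4(x)$ should correspond to $w \in \mH$ with $\dist_\mH(x',w) = 3$ together with a condition relating the direction of the geodesic from $x'$ to $w$ to the line $L$; the analogous statement holds for $y$ with $M$ in place of $L$. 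Since $L \neq M$ (as $x \neq y$ and both are collinear with $x'$ but distinct from it, giving genuinely different lines through $x'$), the two ``obstruction directions'' differ.

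The crux, and the step I expect to be the main obstacle, is to pin down exactly which points $w \in \mH$ at distance $3$ from $x'$ satisfy $\dist_\mG(x,w) = 4$ versus $=3$, and to argue that this selection genuinely depends on $L$ rather than only on $x'$. I would do this by invoking the suborbit diagram with respect to $x'$: the lines through $x'$ not in $\mH$ (there are six by Lemma~\ref{lem4.27}, one of which is $L_{x'} \in S^\ast$) split according to Lemma~\ref{lem4.10}, and the choice of which of these six lines equals $L$ determines, via the quads through $x'$ and Corollary~\ref{co4.12}, a different partition of $\G_3^{\mH}(x') := \{w \in \mH : \dist_\mH(x',w)=3\}$ into those pulled to distance $3$ and those pushed to distance $4$ from $x$. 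If $\mH \cap \G_4(x) = \mH \cap \G_4(y)$ were to hold, then the same subset of $\G_3^{\mH}(x')$ would be selected by both $L$ and $M$; I would derive a contradiction by exhibiting a single point $w \in \mH$ with $\dist_\mH(x',w)=3$ whose geodesic from $x'$ passes through the neighbour of $x'$ determined by $L$ but not by $M$, so that $\dist_\mG(x,w) \neq \dist_\mG(y,w)$ and exactly one of them equals $4$. The existence of such a $w$ follows from the transitivity of $\mathrm{J}_2$ on suitable configurations in $\HJ$ and the fact that distinct lines through $x'$ yield distinct local structures in the regular near octagon.
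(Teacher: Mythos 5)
Your proposal has two genuine gaps, one of which is a concrete false step.

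First, your claim that $L = x'x$ and $M = x'y$ are ``genuinely different lines through $x'$'' is wrong: since lines of $\mG$ have three points and every line through $x'$ not contained in $\mH$ meets $\mH$ only in $x'$, the points $x$ and $y$ may well be the two points of a \emph{single} line through $x'$ (e.g.\ the two points of $L_{x'} \setminus \{x'\}$). In that case $L = M$ and your entire mechanism of ``two different obstruction directions'' produces nothing. This collinear case is not exotic --- it must be handled separately, and it has a short direct argument: if $\{x,y,x'\}$ is a line and $z \in \G_4(x') \cap \mH$, then (NP2) applied to that line forces $\{\dist(z,x),\dist(z,y)\} = \{3,4\}$, so $z$ lies in exactly one of $\mH \cap \G_4(x)$, $\mH \cap \G_4(y)$. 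The paper treats this as its first case.

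Second, in the non-collinear case the crux of your argument --- the existence of a point $w \in \mH$ with $\dist_\mG(x,w) \neq \dist_\mG(y,w)$, exactly one value being $4$ --- is asserted rather than proved: ``transitivity of $\mathrm{J}_2$ on suitable configurations'' and ``distinct local structures'' is precisely what needs to be established, and your proposed mechanism (a geodesic from $x'$ to $w$ inside $\mH$ ``passing through the neighbour of $x'$ determined by $L$'') cannot work literally, since that neighbour does not lie in $\mH$. The actual witnesses arise from \emph{shortcuts}, not geodesics through $x'$: working in the suborbit diagram with $\omega = x'$, each $z \in \mO_{3b}(x') \cap \mH$ has exactly one line to $\mO_{2b}$ not contained in $\mH$, which leads back to a unique point $u \in \mO_{1b}(x') \setminus \mH$ at distance $2$ from $z$; counting shows the ten points of $\mO_{1b}(x')\setminus\mH$ thereby partition the $160$ points of $\mO_{3b}(x')\cap\mH$ into ten blocks of size $16$, and the $16$ points in the block of $x$ are at distance $2$ from $x$ but at distance $4$ from $y$. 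Relatedly, your ``concrete'' description of $\mH \cap \G_4(x)$ as a subset of $\{w \in \mH : \dist_\mH(x',w)=3\}$ is false ($\mH \cap \G_4(x)$ also contains all points of $\mH$ opposite to $x'$; note $|\mH \cap \G_4(x)| \in \{160,192\}$ while there are only $160$ points of $\mH$ at distance $3$ from $x'$, of which some are at distance $2$ from $x$), and the opening claim that distances from $x$ to $\mH$ are ``distances from $x'$ shifted by one'' is exactly what fails here --- $\mH$ is not gated, which is the whole point of the lemma.
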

\begin{proof}
We consider the following two cases:
\begin{enumerate}
\item The point $x$ is collinear with $y$. 
Let $x' = \pi_\mH(x) = \pi_\mH(y)$ and $z \in \G_4(x') \cap \mH$. 
Since $\{x, y, x'\}$ is a line, by (NP2), either $\dist(z,x) = 3$ and $\dist(z,y) = 4$, or $\dist(z,x) = 4$ and $\dist(z,y) = 3$. 
In either case $z$ belongs to only one of $\mH \cap \G_4(x)$, $\mH \cap \G_4(y)$. 

\item The point $x$ is not collinear with $y$. Consider the suborbit diagram with $\omega$ equal to the common projection of $x$ and $y$. 

Let $z$ be a point in $\mO_{3b} \cap \mH$. There are five lines through $z$ going back to $\mO_{2b}$ and four of them are contained in $\mH$. The one line that is not contained in $\mH$ gives us a unique point $z'$ of $\mO_{2b} \setminus \mH$ collinear with $z$.  This in turn gives us a unique point $u$ in $\mO_{1b} \setminus \mH$ collinear with $\omega$ and $z'$. This point has distance $2$ from $z$ and cannot belong to $\mathcal{H}$ by Lemma \ref{lem4.27}.

Conversely, let $u$ be a point in $\mO_{1b} \setminus \mH$. It has sixteen neighbours in $\mO_{2b}$ none of which is contained in $\mH$ by Lemma~\ref{lem4.27}.  By the suborbit diagram and Lemma~\ref{lem4.18}, the projection of each of these sixteen points in $\mH$ must lie in $\mO_{3b}$. Therefore, the ten points of $\mO_{1b} \setminus \mH$ partition the set $\mO_{3b} \cap \mH$, by the distance $2$ map, into ten disjoint sets of size sixteen.

Without loss of generality, say $x \in \mO_{1b}$. Then the sixteen points of $\mO_{3b} \cap \mathcal{H}$ that are at distance $2$ from $x$ are at distance $4$ from $y$. Indeed, if $z \in \mO_{3b} \cap \mathcal{H}$ lies at distance $2$ from $x$, then through $\omega$, there are precisely five lines containing a point at distance $2$ from $z$. Four of these lines are contained in $\mH$ and the fifth line is $\omega x$. So, $y$ which is still on another line through $\omega$ should lie at distance 4 from $z$.
\end{enumerate}
\end{proof}

\subsection{The automorphism group} \label{sec4.3}

In this subsection, we show that the full automorphism group of $\mathcal{G}$ is isomorphic to $G = \mathrm{G}_2(4) : 2$.

\begin{lem} \label{lem4.29}
The group $\mathrm{G}_2(4){:}2$ acts as a group of automorphism of $\mathcal{G}$, where the action on the point set (i.e. the central involutions of $G$) is given by conjugation.
\end{lem}
\begin{proof}
Each $g \in G$ determines an automorphism of $\mathcal{G}$: if $x$ is a central involution and $g \in G$, then $x^g = g^{-1} x g$ is again a central involution. Since the central involutions generate the group $G' = \mathrm{G}_2(4)$ and $C_G(G')=1$, the action of each $g \in G \setminus \{ e \}$ is faithful.
\end{proof}

\begin{lem} \label{lem4.30}
Every automorphism $\theta$ of $\mathcal{G}$ permutes the elements of $S^\ast$ and hence determines an automorphism of $(S^\ast,\mathcal{Q}^\ast) \cong \mathrm{H}(4)^D$.
\end{lem}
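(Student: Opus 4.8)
The plan is to give a purely geometric characterization of the line spread $S^\ast$ inside the near octagon $\mathcal{G}$, so that any automorphism is forced to preserve it. The key observation is that $S^\ast$ can be recovered from the incidence of lines with quads, and quads are defined entirely in terms of the geometry (they are the convex subspaces of diameter $2$ that induce a nondegenerate generalized quadrangle). Since an automorphism $\theta$ preserves the collinearity graph, distances, lines, convexity and the subspace structure, it maps quads to quads. Thus $\theta$ induces a permutation of the quad set $\mathcal{Q}^\ast$, and for every line $L$ the number of quads containing $L$ is preserved, i.e.\ $|\{Q \in \mathcal{Q}^\ast : L \subseteq Q\}| = |\{Q \in \mathcal{Q}^\ast : \theta(L) \subseteq Q\}|$.

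The main step is therefore to show that the lines of $S^\ast$ are exactly those lines that are contained in more than one quad. Fix a point $x$ and recall that $L_x$ denotes the unique line of $S^\ast$ through $x$. By Corollary \ref{co4.5}, each of the five quads through $x$ contains $L_x$, so $L_x$ lies in (exactly) five quads. On the other hand, any line $M$ through $x$ with $M \neq L_x$ is not in $S^\ast$ (as $S^\ast$ is a spread by Lemma \ref{lem4.1}), and by Lemma \ref{lem4.4} such a line is contained in a \emph{unique} quad. Hence every line outside $S^\ast$ lies in exactly one quad, while every line of $S^\ast$ lies in five. This gives the sought intrinsic description: $L \in S^\ast$ if and only if $L$ is contained in at least two quads of $\mathcal{G}$.

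Combining these two points finishes the first assertion: since $\theta$ preserves quad-incidence and hence the number of quads through each line, it maps lines of $S^\ast$ to lines of $S^\ast$, i.e.\ it permutes the elements of $S^\ast$. For the second assertion, observe that $\theta$ simultaneously permutes $S^\ast$ and $\mathcal{Q}^\ast$ and respects set containment $L \subseteq Q$. Therefore $\theta$ restricts to a permutation of the point set $S^\ast$ and the line set $\mathcal{Q}^\ast$ of the geometry $(S^\ast,\mathcal{Q}^\ast)$ preserving incidence, that is, to an automorphism of $(S^\ast,\mathcal{Q}^\ast)$, which is isomorphic to $\mathrm{H}(4)^D$ by Lemma \ref{lem4.11}.

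I expect the only subtle point to be the clean separation ``one quad versus more than one quad'' between non-spread and spread lines; once the counts from Corollary \ref{co4.5} and the uniqueness in Lemma \ref{lem4.4} are invoked, the rest is a formal transport of structure under $\theta$. No further computation with the group $G$ or the suborbit diagram should be needed, since the characterization of $S^\ast$ is already phrased entirely within the near octagon.
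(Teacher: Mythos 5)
Your proof is correct and follows essentially the same route as the paper: the paper's (one-line) argument is that $\theta$ permutes the quads and hence the lines obtainable as intersections of two quads, which by Corollary \ref{co4.5} and Lemma \ref{lem4.4} are exactly the lines of $S^\ast$. You have simply spelled out the same intrinsic characterization (spread lines lie in five quads, non-spread lines in exactly one) in more detail.
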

\begin{proof}
The automorphism $\theta$ permutes the quads of $\mathcal{G}$ and hence the lines of $\mathcal{G}$ that can be obtained as intersections of two quads.
\end{proof}

\begin{lem} \label{lem4.31}
Suppose $\theta$ is an automorphism of $\mathcal{G}$ fixing each line of $S^\ast$. Then $\theta$ is the identity.
\end{lem}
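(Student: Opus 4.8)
The plan is to show that $\theta$ fixes every quad of $\mathcal{G}$ pointwise and then invoke Corollary \ref{co4.5}, which guarantees that every point lies in (at least) one quad; this immediately forces $\theta = \mathrm{id}$. So the whole argument reduces to a purely local statement inside a single quad, and the hypothesis ``fixes each line of $S^\ast$'' has to be squeezed for all it is worth there, since a priori $\theta$ could still permute the three points on each spread line.

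First I would fix a quad $Q$ and check that $\theta$ stabilises it. By Corollary \ref{co4.3} the lines of $S^\ast$ contained in $Q$ form a spread of $Q$, so $Q$ is exactly the union of the five lines of $S^\ast$ it contains. Since $\theta$ fixes each line of $S^\ast$ setwise, it maps this set of five spread lines to itself, whence $\theta(Q)=Q$, and $\theta$ restricts to an automorphism of the generalized quadrangle $Q \cong W(2)$ that stabilises each line of its $S^\ast$-spread.

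The crux is then the following local claim: an automorphism of $W(2)$ that fixes every line of a given spread setwise must be the identity. I would prove this in the doily model of $W(2)$, whose points are the $2$-subsets of a $6$-set $\Omega$ and whose lines are the partitions of $\Omega$ into three $2$-subsets, so that $\mathrm{Aut}(W(2)) \cong \mathrm{Sym}(\Omega) \cong S_6$. In this model a spread corresponds to a one-factorization (a ``total'') of the complete graph on $\Omega$, i.e.\ a set of five pairwise disjoint perfect matchings covering all $15$ edges. The stabiliser in $S_6$ of such a total is a subgroup isomorphic to $S_5$, and --- this is the point where the exotic embedding $S_5 < S_6$ enters --- it acts \emph{faithfully} on the five factors of the total. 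Consequently the only element fixing each factor setwise is the identity, which proves the claim; if one prefers to avoid this structural input, the same conclusion follows from a short direct computation tracking the images of the six points of $\Omega$ through the defining pairs of the five matchings. This faithfulness is the step I expect to require the most care, since it is precisely what rules out a nontrivial $\theta$ permuting the three points on each spread line.

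Applying the local claim to each quad $Q$ gives $\theta|_Q = \mathrm{id}_Q$, and as every point of $\mathcal{G}$ lies in a quad by Corollary \ref{co4.5}, we conclude that $\theta$ fixes all points and hence $\theta = \mathrm{id}$.
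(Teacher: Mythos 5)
Your proof is correct, and the reduction is the same as the paper's: both arguments work inside a single quad, using Corollary \ref{co4.3} to see that the $S^\ast$-lines in a quad $Q$ form a spread of $Q\cong W(2)$ (so that $Q$, being the union of those five lines, is stabilised by $\theta$). Where you diverge is in how the local claim is settled. The paper avoids any appeal to the automorphism group of $W(2)$: for a point $x$ and a non-spread line $L=\{x,y,z\}$ lying in the unique quad $Q$ through $L$, it observes that $L$ is the \emph{unique} line of $Q$ meeting $L_x$, $L_y$ and $L_z$, so $L^\theta=L$, and then $x^\theta\in L\cap L_x=\{x\}$. You instead invoke the identification $\mathrm{Aut}(W(2))\cong S_6$, the correspondence between spreads and one-factorizations of $K_6$, and the faithfulness of the action of the (exotic) $S_5$ spread-stabiliser on the five factors. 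That faithfulness is a true and standard fact (e.g.\ the kernel is a normal subgroup of $S_5$ contained in the stabiliser of a single one-factor, which has order $48<60$, so it is trivial), so your argument goes through; but it imports structural information about $W(2)$ that the paper's three-line incidence argument does not need. Your version does have the merit of isolating a clean, reusable statement (an automorphism of $W(2)$ fixing each line of a spread setwise is trivial), and of making explicit why $\theta$ cannot nontrivially permute the three points on a spread line; the paper's version is shorter and entirely self-contained. Both correctly conclude via Corollary \ref{co4.5} (every point lies in a quad) that $\theta$ fixes all points, hence is the identity.
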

\begin{proof}
Let $x$ be an arbitrary point of $\mathcal{G}$, $L = \{ x,y,z \}$ a line through $x$ not belonging to $S^\ast$ and $Q$ the unique quad through $L$. The lines of $S^\ast$ contained in $Q$ determine a spread of the $W(2)$-quad $Q$. Inside $Q$, it is easily seen that $L$ is the unique line of $Q$ meeting $L_x$, $L_y$ and $L_z$. From $L_x^\theta = L_x$, $L_y^\theta = L_y$ and $L_z^\theta = L_z$, it then follows that $x^\theta = x$. 
\end{proof}

\begin{prop} \label{prop4.32}
The full automorphism group of $\mathcal{G}$ is isomorphic to $\mathrm{G}_2(4) : 2$.
\end{prop}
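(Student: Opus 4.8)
The plan is to squeeze $\mathrm{Aut}(\mathcal{G})$ between two copies of $\mathrm{G}_2(4){:}2$. On the one hand, Lemma \ref{lem4.29} already gives a faithful action of $\mathrm{G}_2(4){:}2$, hence a lower bound $|\mathrm{Aut}(\mathcal{G})| \geq |\mathrm{G}_2(4){:}2|$. On the other hand, I would use the action on the line spread $S^\ast$ and its associated hexagon to produce an embedding of $\mathrm{Aut}(\mathcal{G})$ into a group of the same order, forcing equality.

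First I would make the map of Lemma \ref{lem4.30} into a homomorphism. Since every automorphism $\theta$ of $\mathcal{G}$ permutes the lines of $S^\ast$ and sends quads to quads, it induces a (type-preserving) automorphism $\Phi(\theta)$ of the point-line geometry $(S^\ast,\mathcal{Q}^\ast) \cong \mathrm{H}(4)^D$, and the assignment $\theta \mapsto \Phi(\theta)$ is clearly a group homomorphism $\Phi \colon \mathrm{Aut}(\mathcal{G}) \to \mathrm{Aut}\big((S^\ast,\mathcal{Q}^\ast)\big)$. The kernel of $\Phi$ consists exactly of those automorphisms fixing every element of $S^\ast$, and Lemma \ref{lem4.31} says any such automorphism is the identity. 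Thus $\Phi$ is injective, and $\mathrm{Aut}(\mathcal{G})$ embeds into the collineation group of $\mathrm{H}(4)^D$.

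Next I would invoke the classical determination of the collineation group of the split Cayley hexagon. A generalized hexagon and its point-line dual have the same collineation group, and the full collineation group of $\mathrm{H}(4)$ is $\mathrm{Aut}(\mathrm{G}_2(4)) = \mathrm{G}_2(4){:}2$, the factor $2$ coming from the field automorphism of $\mathbb{F}_4$ (there are no extra dualities since $4$ is not a power of $3$, so $\mathrm{H}(4)$ is not self-dual). Consequently $\mathrm{Aut}\big((S^\ast,\mathcal{Q}^\ast)\big) \cong \mathrm{G}_2(4){:}2$, which yields the upper bound $|\mathrm{Aut}(\mathcal{G})| \leq |\mathrm{G}_2(4){:}2|$.

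Combining the two bounds gives $|\mathrm{Aut}(\mathcal{G})| = |\mathrm{G}_2(4){:}2|$, and since the subgroup from Lemma \ref{lem4.29} already has this order it must be the whole group; hence $\mathrm{Aut}(\mathcal{G}) \cong \mathrm{G}_2(4){:}2$. The only non-elementary ingredient, and the step I would be most careful to cite precisely, is the external fact that the full collineation group of $\mathrm{H}(4)$ equals $\mathrm{G}_2(4){:}2$; everything else is a formal consequence of Lemmas \ref{lem4.29}, \ref{lem4.30} and \ref{lem4.31}.
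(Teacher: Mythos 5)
Your proposal is correct and follows exactly the paper's own argument: inject $\mathrm{Aut}(\mathcal{G})$ into $\mathrm{Aut}\big((S^\ast,\mathcal{Q}^\ast)\big)\cong\mathrm{Aut}(\mathrm{H}(4)^D)$ via Lemmas \ref{lem4.30} and \ref{lem4.31}, use the known order of the collineation group of $\mathrm{H}(4)$ for the upper bound, and conclude with the lower bound from Lemma \ref{lem4.29}. Your write-up merely makes explicit the homomorphism/kernel language that the paper leaves implicit.
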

\begin{proof}
By Lemmas \ref{lem4.30} and \ref{lem4.31}, $|Aut(\mathcal{G})| \leq |Aut(\mathrm{H}(4)^D)| = |\mathrm{G}_2(4){:}2|$. Lemma \ref{lem4.29} then implies that $Aut(\mathcal{G}) \cong \mathrm{G}_2(4) : 2$.
\end{proof}

\bigskip \noindent It is possible to give another proof of Proposition \ref{prop4.32} based on the following lemma.

\begin{lem} \label{lem4.33}
Let $H$ be a subgroup of $\mathrm{G}_2(4){:}2$ isomorphic to $\mathrm{J}_2{:}2$. Then every automorphism $\theta$ of $\mathcal{S}_H \cong \HJ$ extends to precisely one automorphism of $\mathcal{G}$.
\end{lem}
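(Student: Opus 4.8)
The plan is to realize both the existence and the uniqueness of the extension through a single restriction homomorphism. By Proposition \ref{prop4.32} I may identify $\mathrm{Aut}(\mathcal{G})$ with $G = \mathrm{G}_2(4){:}2$ acting by conjugation on the central involutions. The first step is to pin down the setwise stabilizer of the suboctagon $\mathcal{S}_H$ inside this group. Writing $H' = \langle \Sigma_H \rangle \cong \mathrm{J}_2$, a point of $\mathcal{G}$ lies in $\mathcal{S}_H$ precisely when it lies in $\Sigma_H = \Sigma_G \cap H'$, so an automorphism $g \in G$ stabilizes $\mathcal{S}_H$ if and only if it stabilizes the set $\Sigma_H$ setwise, equivalently normalizes $H' = \langle \Sigma_H \rangle$. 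By the preliminary facts recalled in Section \ref{sec2}, $N_G(H') = H$, so $\mathrm{Stab}_G(\mathcal{S}_H) = H \cong \mathrm{J}_2{:}2$.

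Next I would consider the restriction homomorphism $\rho \colon H \to \mathrm{Aut}(\mathcal{S}_H)$ sending $g \in H$ to the automorphism of $\mathcal{S}_H$ induced by conjugation. Its kernel consists of those $g \in H$ fixing every point of $\mathcal{S}_H$, i.e.\ commuting with every involution of $\Sigma_H$; hence $\ker \rho = C_G(H') \cap H = C_G(H')$, the last equality holding because $C_G(H') \subseteq N_G(H') = H$. Since $C_G(H')$ is a normal subgroup of $N_G(H') = H \cong \mathrm{J}_2{:}2$ that meets the simple subgroup $H' \cong \mathrm{J}_2$ trivially (as $Z(H') = 1$), and the only normal subgroups of $\mathrm{J}_2{:}2$ are $1$, $\mathrm{J}_2$ and $\mathrm{J}_2{:}2$, we are forced to conclude $C_G(H') = 1$. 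Thus $\rho$ is injective.

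Finally I would compare orders. The Hall-Janko near octagon $\HJ$ has full automorphism group isomorphic to $\mathrm{J}_2{:}2$, so $|\mathrm{Aut}(\mathcal{S}_H)| = |\mathrm{J}_2{:}2| = |H|$; combined with injectivity this forces $\rho$ to be an isomorphism. Surjectivity yields existence at once: each $\theta \in \mathrm{Aut}(\mathcal{S}_H)$ equals $\rho(g)$ for some $g \in H$, and conjugation by $g$ is an automorphism of $\mathcal{G}$ restricting to $\theta$. For uniqueness, if $\phi_1, \phi_2 \in \mathrm{Aut}(\mathcal{G})$ both restrict to $\theta$, then $\phi_1 \phi_2^{-1}$ fixes $\mathcal{S}_H$ pointwise, in particular stabilizes it, hence lies in $\mathrm{Stab}_G(\mathcal{S}_H) = H$ and in $\ker \rho = 1$; therefore $\phi_1 = \phi_2$. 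The main obstacle I anticipate is the group-theoretic core: identifying the stabilizer as $H$ and proving $C_G(H') = 1$. Once those are established, the order comparison (relying on the known automorphism group of $\HJ$) makes the whole conclusion immediate.
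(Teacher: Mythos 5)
Your proof is correct, but it takes a genuinely different route from the paper's, most visibly in the uniqueness half. The paper proves uniqueness geometrically: it shows that any automorphism of $\mathcal{G}$ fixing $\mathcal{S}_H$ pointwise is the identity, by appealing to Lemma \ref{lem4.28} (two distinct points outside $\mathcal{H}$ with the same projection onto $\mathcal{H}$ have different traces $\mathcal{H}\cap\Gamma_4(\cdot)$). Crucially, that argument does not use Proposition \ref{prop4.32}; this is why the paper can then turn Lemma \ref{lem4.33} around and use it, together with the count of $416$ suboctagons, to give a \emph{second} proof that $\mathrm{Aut}(\mathcal{G})\cong\mathrm{G}_2(4){:}2$. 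Your argument instead takes Proposition \ref{prop4.32} as input, identifies $\mathrm{Aut}(\mathcal{G})$ with $G$ acting by conjugation (which implicitly also needs Lemma \ref{lem4.29} to know that the conjugation action realizes the whole automorphism group, not just an abstractly isomorphic copy), pins down the setwise stabilizer of $\mathcal{S}_H$ as $N_G(H')=H$, and kills the kernel of the restriction map by showing $C_G(H')=1$; surjectivity then falls out of the order comparison with $|\mathrm{Aut}(\HJ)|=|\mathrm{J}_2{:}2|$. This is a clean, self-contained group-theoretic argument that avoids Lemma \ref{lem4.28} entirely, and it buys you slightly more (the restriction map is an isomorphism $H\to\mathrm{Aut}(\mathcal{S}_H)$). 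The price is that your proof cannot serve the purpose the paper assigns to this lemma, namely an alternative derivation of the full automorphism group: feeding your version into the paragraph following Lemma \ref{lem4.33} would be circular. Within the paper as a whole there is no logical circularity, since Proposition \ref{prop4.32} has its own independent proof via Lemmas \ref{lem4.30} and \ref{lem4.31}, but you should flag that your proof is only valid downstream of that proposition.
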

\begin{proof}
The action of $\theta$ on the point set of $\mathcal{S}_H$ is given by conjugation by a suitable element of $H \cong \mathrm{J}_2{:}2$. This conjugation also determines an automorphism of $\mathcal{G}$. To show that $\theta$ extends to at most one automorphism of $\mathcal{G}$, we must show that every automorphism $\varphi$ of $\mathcal{G}$ that fixes each point of $\mathcal{S}_H$ must be trivial. But this is implied by Lemma \ref{lem4.28}.
\end{proof}

\bigskip \noindent Since there are 416 Hall-Janko suboctagons, Lemma \ref{lem4.33} implies that $|Aut(\mathcal{G})| \leq 416 \cdot |Aut(\HJ)| = 416 \cdot |\mathrm{J}_2{:}2| = |\mathrm{G}_2(4) {:}2|$. Lemma \ref{lem4.29} then again implies that $Aut(\mathcal{G}) \cong \mathrm{G}_2(4) : 2$. In fact, this reasoning also gives that the automorphism group is transitive on the Hall-Janko suboctagons, but we already knew this in advance as all maximal subgroups isomorphic to $\mathrm{J}_2{:}2$ are conjugate.

\section{The Suzuki tower} \label{sec5}

Let $\mathcal{S}_0$, $\mathcal{S}_1$, $\mathcal{S}_2$, $\mathcal{S}_3$ be the near polygons and $\Gamma_0$, $\Gamma_1$, $\Gamma_2$, $\Gamma_3$, $\Gamma_4$ the graphs of the Suzuki tower as mentioned in Section \ref{sec1}. Then we know that $\mathcal{S}_0 = \mathrm{H}(2, 1)$, $\mathcal{S}_1 = \mathrm{H}(2)^D$, $\mathcal{S}_2 = \HJ$ and $\mathcal{S}_3 = \mathcal{G}$, where $\mathrm{H}(2, 1)$ is the unique generalized hexagon of order $(2,1)$. We define $\mathcal{S}_{-1}$ to be the partial linear space on nine points and four lines obtained from the $(3 \times 3)$-grid by removing two disjoint lines (and keeping the points incident with these two lines). We define $\mathcal{S}_{-2}'$ to be a line with three points and $\mathcal{S}_{-2}''$ to be a coclique of size 3 (no lines). 

The graphs $\Gamma_i$ with $i=0,1,2,3$ can all be obtained in a similar way from the near polygons $\mathcal{S}_i$ and some of their subgeometries. It can easily be verified that $\Gamma_0$ is isomorphic to the graph whose vertices are the subgeometries of $\mathcal{S}_0$ isomorphic to $\mathcal{S}_{-1}$, where two such subgeometries are adjacent whenever they intersect in a subgeometry isomorphic to $\mathcal{S}_{-2}'$ or $\mathcal{S}_{-2}''$. The graph $\Gamma_i$ with $i=1,2$ is known to be isomorphic to the graph whose vertices are the subgeometries of $\mathcal{S}_i$ isomorphic to $\mathcal{S}_{i-1}$, where two subgeometries are adjacent whenever they intersect in a subgeometry isomorphic to $\mathcal{S}_{i-2}$. We prove an analogous property\footnote{A similar property holds for the complements: The complement of $\Gamma_i$ with $i=0,1,2,3$ is isomorphic to the graph whose vertices are the subgeometries of $\mathcal{S}_i$ isomorphic to $\mathcal{S}_{i-1}$, where two distinct subgeometries are adjacent whenever they intersect in the perp of a point (in any of these two subgeometries).} for the graph $\Gamma_3$.

\begin{lem} \label{lem5.1}
The $\mathrm{G}_2(4)$-graph $\Gamma_3$ is isomorphic to the graph $\Gamma'$ whose vertices are the Hall-Janko suboctagons of $\mathcal{G}$, where two Hall-Janko suboctagons are adjacent whenever they intersect in a subgeometry isomorphic to $\mathrm{H}(2)^D$.
\end{lem}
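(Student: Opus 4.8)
The plan is to realise both $\Gamma_3$ and $\Gamma'$ as $G$-invariant graphs on one and the same vertex set, and then to single out $\Gamma'$ by computing its valency. First I would recall from Lemma~\ref{lem4.26} that the $416$ Hall-Janko suboctagons of $\mathcal{G}$ are precisely the subgeometries $\mathcal{S}_H$ attached to the $416$ maximal subgroups $H \cong \mathrm{J}_2{:}2$ of $G = \mathrm{G}_2(4){:}2$, and that these subgroups form a single conjugacy class. Hence $G$ acts transitively by conjugation on the vertex set of $\Gamma'$, and since each $\mathrm{J}_2{:}2$ is self-normalising of index $416$, this action is permutation equivalent to the conjugation action of $G$ on the cosets of $\mathrm{J}_2{:}2$, which is exactly the rank $3$ action defining the $\mathrm{G}_2(4)$-graph $\Gamma_3$. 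This identifies the vertex sets of $\Gamma'$ and $\Gamma_3$ as $G$-sets.

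Because the action is of rank $3$, the group $G$ has exactly three orbits on ordered pairs of vertices (the diagonal together with the two suborbits, of sizes $100$ and $315$), so the only loopless $G$-invariant graphs on this vertex set are the empty graph, the complete graph, $\Gamma_3$, and its complement. The relation ``$\mathcal{S}_{H_1}$ and $\mathcal{S}_{H_2}$ meet in a subgeometry isomorphic to $\mathrm{H}(2)^D$'' is preserved by every element of $G$, which acts as an automorphism of $\mathcal{G}$ and therefore respects the isomorphism type of an intersection; hence $\Gamma'$ is $G$-invariant. It thus remains only to compute the valency of $\Gamma'$: the value $100$ yields $\Gamma' \cong \Gamma_3$, whereas $315$ would give its complement.

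For the valency I would fix a suboctagon $\mathcal{S}_{H_1} \cong \HJ$ and set up a bijection between its neighbours in $\Gamma'$ and its subhexagons isomorphic to $\mathrm{H}(2)^D$. By the description of $\Gamma_2$ recalled in Section~\ref{sec5}, $\mathcal{S}_{H_1}$ contains exactly $100$ such subhexagons $\mathcal{K}$ (equivalently $[\mathrm{J}_2 : \mathrm{U}_3(3)] = 100$), and each generates a subgroup $\langle \Sigma_{\mathcal{K}} \rangle = U \cong \mathrm{U}_3(3)$ of $H_1' \cong \mathrm{J}_2$. The key group-theoretic input is that each such $U$ lies in exactly two $\mathrm{J}_2$-subgroups of $\mathrm{G}_2(4)$: at least two by the double count $416 \cdot 100 = 20800 \cdot 2$ over incident pairs $(\mathrm{J}_2,\mathrm{U}_3(3))$ (there being $20800 = |\mathrm{G}_2(4)|/|\mathrm{U}_3(3){:}2|$ subgroups $\mathrm{U}_3(3)$, all conjugate), and at most two because $\mathrm{U}_3(3)$ is maximal in $\mathrm{J}_2$. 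One of these is $H_1'$; let $H_2$ be the normaliser of the other. Maximality of $\mathrm{U}_3(3)$ in $\mathrm{J}_2$ forces $H_1' \cap H_2' = U$, so $\Sigma_{H_1} \cap \Sigma_{H_2} \subseteq U \cap \Sigma_G = \Sigma_{\mathcal{K}} \subseteq \Sigma_{H_1} \cap \Sigma_{H_2}$, whence $\mathcal{S}_{H_1} \cap \mathcal{S}_{H_2} = \mathcal{K}$ and $\mathcal{S}_{H_2}$ is a neighbour of $\mathcal{S}_{H_1}$. Conversely, any neighbour meets $\mathcal{S}_{H_1}$ in one of these $100$ subhexagons $\mathcal{K}$ and, containing $U = \langle \Sigma_{\mathcal{K}} \rangle$, must be the unique such $H_2$. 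This bijection gives valency $100$, so $\Gamma' \cong \Gamma_3$.

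The main obstacle is this valency computation, and within it the claim that each $\mathrm{U}_3(3)$-subgroup sits in exactly two copies of $\mathrm{J}_2$, together with the resulting clean intersection $\mathcal{S}_{H_1} \cap \mathcal{S}_{H_2} = \mathcal{K}$. The maximality of $\mathrm{U}_3(3)$ in $\mathrm{J}_2$ is what drives both the ``at most two'' bound and the exactness of the intersection, and it is the one place where I must appeal to specific subgroup data (verifiable in the ATLAS, or directly in the computer model of $G$) rather than to the rank $3$ formalism alone.
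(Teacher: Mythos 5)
Your argument reaches the correct conclusion but by a genuinely different route. The paper starts from the group-theoretic description of $\Gamma_3$ (vertices the $416$ maximal $\mathrm{J}_2$-subgroups of $\mathrm{G}_2(4)$, adjacency being intersection in a $\mathrm{U}_3(3)$) and checks directly that the bijection $H \mapsto \mathcal{S}_H$ of Lemma~\ref{lem4.26} transports that adjacency to intersection in an $\mathrm{H}(2)^D$: one direction is immediate from $\Sigma_{H_1} \cap \Sigma_{H_2} = \Sigma_{H_1 \cap H_2}$, and the converse uses that the central involutions of an $\mathrm{H}(2)^D$-subhexagon generate the corresponding $\mathrm{U}_3(3)$, which is maximal in each $H_i'$. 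You instead identify the two vertex sets as $G$-sets, invoke the rank-$3$ formalism to reduce everything to a valency count, and then compute that valency to be $100$. Both routes lean on comparable ATLAS input; the paper's is shorter because it never needs to know how many $\mathrm{J}_2$-subgroups contain a given $\mathrm{U}_3(3)$, while yours trades that extra count for a reusable template (invariance plus valency) that sidesteps any case analysis of which intersections can occur.

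One step of the valency computation is misjustified. Maximality of $U \cong \mathrm{U}_3(3)$ in $\mathrm{J}_2$ does \emph{not} give ``at most two'' $\mathrm{J}_2$-subgroups containing $U$: it only forces any two distinct such subgroups to intersect exactly in $U$ (which is the fact you actually need for the clean intersection $\mathcal{S}_{H_1} \cap \mathcal{S}_{H_2} = \mathcal{K}$), and a priori three or more $\mathrm{J}_2$'s could pairwise meet in $U$. The correct way to get ``exactly two'' is already contained in your write-up: since the $20800$ relevant $\mathrm{U}_3(3)$'s form a single conjugacy class, each lies in the same number $k$ of $\mathrm{J}_2$-subgroups, and the double count $416 \cdot 100 = 20800\,k$ forces $k = 2$ outright, with no separate upper bound required. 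With that repair (and the implicit observation, also used by the paper, that every $\mathrm{H}(2)^D$-subgeometry of a Hall-Janko suboctagon is one of its $100$ standard subhexagons attached to a maximal $\mathrm{U}_3(3)$), your proof is complete.
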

\begin{proof}
The $\mathrm{G}_2(4)$-graph is the graph whose vertices are the maximal subgroups of $\mathrm{G}_2(4)$ isomorphic to $\mathrm{J}_2$, where two such maximal subgroups are isomorphic if they intersect in a subgroup isomorphic to $G_2(2)' \cong \mathrm{U}_3(3)$.

It is well-known that the Hall-Janko near octagon $\HJ$ has 100 subhexagons isomorphic to $\mathrm{H}(2)^D$, and that these are in bijective correspondence with the 100 maximal subgroups of $\mathrm{J}_2$ isomorphic to $G_2(2)'$. The points of a subhexagon are the central involutions contained in the corresponding maximal subgroup. Moreover, these central involutions generate the maximal subgroup.

For every subgroup $H$ of $\mathrm{G}_2(4)$, denote by $\Sigma_H$ the set of central involutions contained in $H$. If $H \cong \mathrm{J}_2$, then the geometry $\mathcal{S}_H$ induced on the subspace $\Sigma_H$ is isomorphic to $\HJ$. By Lemma \ref{lem4.26}, the map $H \mapsto \mathcal{S}_H$ defines a bijection between the 416 maximal subgroups of $\mathrm{G}_2(4)$ isomorphic to $\mathrm{J}_2$ and the 416 Hall-Janko suboctagons of $\mathcal{G}$. We show that this map defines an isomorphism between the $\mathrm{G}_2(4)$-graph and the graph $\Gamma'$. Take two mutually distinct subgroups $H_1$ and $H_2$ of $\mathrm{G}_2(4)$ isomorphic to $\mathrm{J}_2$. 

If $H_1$ and $H_2$ are two adjacent vertices of the $\mathrm{G}_2(4)$-graph, then the subgeometries $\mathcal{S}_{H_1}$ and $\mathcal{S}_{H_2}$ intersect in a subgeometry whose point set is $\Sigma_{H_1} \cap \Sigma_{H_2} = \Sigma_{H_1 \cap H_2}$, i.e. in a subgeometry isomorphic to $\mathrm{H}(2)^D$ as $H_1 \cap H_2 \cong G_2(2)'$.

Conversely, suppose that $\mathcal{S}_{H_1}$ and $\mathcal{S}_{H_2}$ intersect in a subgeometry isomorphic to $\mathrm{H}(2)^D$. Then $\Sigma_{H_1} \cap \Sigma_{H_2}$ contains all central involutions that are contained in a certain $G_2(2)'$-subgroup $K_i$ of $H_i \cong \mathrm{J}_2$, $i=1,2$. Since all these central involutions generate $K_i$, the groups $K_1$ and $K_2$ are equal, say to $K$. As $K$ is a maximal subgroup of both $H_1$ and $H_2$, we have $K = H_1 \cap H_2$, i.e. $H_1$ and $H_2$ are adjacent in the $\mathrm{G}_2(4)$-graph.
\end{proof}

\bigskip \noindent  Lemma \ref{lem5.1} is precisely Theorem \ref{main:3}. The graphs $\Gamma_0,\Gamma_1,\Gamma_2,\Gamma_3$ of the Suzuki tower can all be constructed from the near polygons $\mathcal{S}_0,\mathcal{S}_1,\mathcal{S}_2,\mathcal{S}_3$ and some of their subgeometries. Theorem \ref{main:4} which we will now prove says that this is also true for the remaining graph $\Gamma_4$ in the Suzuki tower. In fact, the construction given in Theorem \ref{main:4}  is a translation (in terms of subgeometries of $\mathcal{S}_3 = \mathcal{G}$) of the original construction of the Suzuki graph \cite{Su}. We wish to note that it is also possible to give similar constructions for the other graphs of the Suzuki tower by translating their original constructions in terms of substructures of a suitable $\mathcal{S}_i$. We will omit these other constructions here.

\medskip \noindent Let us first review the original construction of the Suzuki Tower (see \cite{Su} or \cite{Pa}). Let $\Delta = \G_{i-1}$, $\G = \G_i$ and $H = Aut(\Delta)$ for some $i \in \{1, 2, 3, 4\}$. The graph $\G$ is constructed from the graph $\Delta$ as follows. 

Let $\infty$ be an extra symbol. Let $S$ be the conjugacy class of $2$-subgroups of type $2A$ if $i = 1$, $2$ or $3$. In the remaining case, $i = 4$, let $S$ be the conjugacy class of $2^2$-subgroups of $H$ generated by $2A$-involutions $x$ and $y$ such that $[H : N_H(\langle x, y \rangle)] = 1365$ (the long root subgroups). Define the vertex set of $\G$ as $V(\G) := \{\infty\} \cup V(\Delta) \cup S$. Vertex $\infty$ is made adjacent to all the elements of $V(\Delta)$, two vertices in $V(\Delta)$ are adjacent if they are adjacent as vertices of $\Delta$, a vertex $x \in S$ is adjacent to a vertex $v \in V(\Delta)$ if a non trivial element of the subgroup corresponding to $x$ fixes $v$, and two vertices $x$, $y$ in $S$ are adjacent if $x$, $y$ considered as subgroups of $H$ do not commute but there exists a $z \in S$ that commutes with both of them. 

\medskip \noindent In the case $i=4$, $\Delta$ is the $\mathrm{G}_2(4)$-graph and $H \cong \mathrm{G}_2(4){:}2$. We know that the vertices of $\Delta$ can be put in 1-1 correspondence with the Hall-Janko suboctagons of $\mathcal{G}$ and that the elements of $S$ (the long root subgroups) can be put in 1-1 correspondence with the lines of the spread $S^\ast$. Two long root subgroups are adjacent whenever they do not commute, but there exists a long root subgroup that commutes with both. In terms of properties of $\mathcal{G}$, this means that the corresponding lines of $S^\ast$ must lie at distance $2$ from each other in the near polygon. By Lemma \ref{lem4.33} we know that every automorphism stabilizing a Hall-Janko suboctagon $\mathcal{H}$ must be a conjugation by an element of the $\mathrm{J}_2{:}2$-subgroup corresponding to $\mathcal{H}$.
From this it follows that if $x \in S$ and $v \in V(\Delta)$, then a non-trivial element of the subgroup corresponding to $x$ fixes $v$ if and only if the spread line corresponding to $x$ intersects the Hall-Janko suboctagon corresponding to $v$. This all implies that the graph as defined in Theorem \ref{main:4} must be isomorphic to the Suzuki graph $\Gamma_4$.

\bigskip \noindent {\bf Remark}: There are other known strongly regular graphs that can be constructed from substructures of these near polygons in a similar way as Theorems \ref{main:3} and \ref{main:4}. For example, it is known that the Hall-Janko near octagon contains $280$ copies of the generalized octagon of order $(2,1)$, denoted as $GO(2,1)$, as convex subgeometries (cf.~Proposition 4.7 in \cite{Yo}), with every pair of distinct $GO(2,1)$'s intersecting in $5$ or $15$ points. Computations showed that the graph defined on these $280$ suboctagons, where adjacency is defined by intersection in $15$ points is an $srg(280,36,8,4)$ (this fact was communicated to Andries Brouwer who has included it on his website \cite{Br}). 

Computations with subgeometries of $\mathcal{G}$ also showed that if $\mN_1$ is a Hall-Janko suboctagon and $\mN_2$ a $\mG'$-suboctagon then $\mN_1 \cap \mN_2$ is either isomorphic to $\mathrm{H}(2, 1)$ or $GO(2,1)$. These intersections give us $56$ suboctagons of $\mG'$ isomorphic to $GO(2,1)$, with every pair of distinct $GO(2,1)$'s intersecting in $5$ or $9$ points. We can define a graph on these $56$ suboctagons by defining adjacency as intersection in $9$ points. Computations revealed that this is an $srg(56, 10, 0, 2)$ necessarily isomorphic to the unique strongly regular graph with those parameters, the well known Sims-Gewirtz graph. 
We can also construct the graph $srg(162, 56, 10, 24)$, which is the second subconstituent of the McLaughlin graph \cite{S93}, by taking the elements of $\{\infty\}$, $A$ and $B$ as vertices where $A$ is the set of $56$ sub $GO(2,1)$'s of $\mG'$ and $B$ the set of $105$ lines of $S^*$ that are contained in $\mG'$. Then join $\infty$ to all vertices in $A$, join two distinct vertices of $A$ if the corresponding suboctagons intersect in $9$ points, join a vertex of $A$ to all vertices of $B$ that correspond to a line intersecting the suboctagon, and join two vertices of $B$ if the  lines are at distance $2$. 

\appendix \section{The original construction of the near octagon} \label{appA}

In this appendix, we give another description of the near octagon $\mathcal{G}$. This description was the first description we had for this new near octagon and arose while the authors were studying near polygons that contain $\HJ$ as an isometrically embedded subgeometry. We define a {\em valuation} of $\HJ$ as a map from the point set of $\HJ$ to $\N$ satisfying:
\begin{enumerate}
\item[(V1)] There exists at least one point with $f$-value 0.
\item[(V2)] Every line $L$ of $\HJ$ contains a unique point $x_L$ such that $f(x) = f(x_L) + 1$ for every point $x \not= x_L$ of $L$.
\end{enumerate}
Two valuations $f_1$ and $f_2$ of $\HJ$ are called {\em neighboring} if there exists an $\epsilon \in \mathbb{Z}$ (necessarily belonging to $\{ -1,0,1 \}$) such that $|f_1(x)-f_2(x)+\epsilon| \leq 1$ for every point $x$ of $\HJ$. The number $\epsilon$ is uniquely determined, except when $f_1=f_2$, in which case there are three possible values for $\epsilon$, namely $-1$, $0$ and $1$.

Now, suppose that $f_1$ and $f_2$ are two neighboring valuations of $\HJ$ and let $\epsilon \in \{ -1,0,1 \}$ such that $|f_1(x) - f_2(x) + \epsilon| \leq 1$ for every point $x$ of $\HJ$. If $x$ is point such that $f_1(x) = f_2(x)-\epsilon$, then we define $f_3'(x) := f_1(x)-1 = f_2(x)-\epsilon-1$. If $x$ is a point such that $f_1(x) \not= f_2(x)-\epsilon$, then we define $f_3'(x) := \max(f_1(x),f_2(x)-\epsilon)$. If we put $f_3(x) := f_3'(x)-m$, where $m \in \{ -1,0,1 \}$ is the minimal value attained by $f_3'$, then $f_3$ is again a valuation of $\HJ$, which we will also denote by $f_1 \ast f_2$. The map $f_1 \ast f_2$ is well-defined: if $f_1=f_2$, then there are three possibilities for $\epsilon$, but for each of them, we would have $f_1 \ast f_2 = f_1 = f_2$.  The following properties hold: (i) $f_2 \ast f_1 = f_1 \ast f_2 = f_3$; (ii) $f_1$ and $f_3$ are neighboring valuations and $f_1 \ast f_3 = f_2$; (iii) $f_2$ and $f_3$ are neighboring valuations and $f_2 \ast f_3 = f_1$.

The {\em valuation geometry} $\mathcal{V}$ of $\HJ$ is defined as the partial linear space whose points are the valuations of $\HJ$ and whose lines are the triples $\{ f_1,f_2,f_3 \}$, where $f_1$, $f_2$ and $f_3$ are three mutually distinct valuations of $\HJ$ such that $f_1$ and $f_2$ are neighboring valuations and $f_3 = f_1 \ast f_2$. The valuation geometry $\mathcal{V}$ can provide useful information about near polygons containing $\HJ$ as an isometrically embedded full subgeometry:

\begin{lem} \label{lem6.1}
Let $\mathcal{S}$ be a near polygon with three points per line containing $\HJ$ as an isometrically embedded subgeometry. For every point $x$ of $\mathcal{S}$, let $f_x$ be the map from the point set of $\HJ$ to $\N$ sending each point $y$ of $\HJ$ to $\dist(x,y)-m$, where $m$ is the distance between $x$ and $\HJ$. Then the following holds:
\begin{enumerate}
\item[$(1)$] For every point $x$ of $\mathcal{S}$, $f_x$ is a valuation of $\HJ$.
\item[$(2)$] For every line $\{ x,y,z \}$ of $\mathcal{S}$, either $f_x=f_y=f_z$ or $\{ f_x,f_y,f_z \}$ is a line of $\mathcal{V}$.
\end{enumerate}
\end{lem}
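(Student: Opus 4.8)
The plan is to read off both properties directly from the near-polygon axiom (NP2), applied \emph{inside} $\mathcal{S}$, working first with a single point and then with a collinear triple. Throughout I would use the unnormalized distance functions $g_x := \dist(x,\cdot)$ on the point set of $\HJ$, so that $f_x = g_x - m_x$ with $m_x := \dist(x,\HJ)$. For part $(1)$, property (V1) is immediate: a point $y_0$ of $\HJ$ realizing $\dist(x,y_0)=m_x$ gives $f_x(y_0)=0$, and $f_x\ge 0$ everywhere because $m_x$ is the minimal distance from $x$ to $\HJ$. For (V2), take a line $L$ of $\HJ$; since $\HJ$ is a full subgeometry, $L$ is a line of $\mathcal{S}$ on three points. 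By (NP2) there is a unique point $x_L:=\pi_L(x)$ of $L$ nearest to $x$, and the other two points of $L$, being collinear with $x_L$, lie at distance $\dist(x,x_L)+1$ from $x$. Subtracting $m_x$ yields exactly the statement (V2), completing $(1)$.

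For part $(2)$, fix a line $\{x,y,z\}$ of $\mathcal{S}$. The first observation I would record is that, for every point $u$ of $\HJ$, applying (NP2) to $u$ and the line $\{x,y,z\}$ forces the multiset $\{g_x(u),g_y(u),g_z(u)\}$ to equal $\{d,d+1,d+1\}$ for some $d=d(u)$; in particular exactly one of $x,y,z$ is nearest to $u$ and the three distances are never all equal. Since collinearity gives $|g_x(u)-g_y(u)|\le 1$ for all $u$ while the minima satisfy $|m_x-m_y|\le 1$, the choice $\epsilon:=m_x-m_y\in\{-1,0,1\}$ gives $f_x(u)-f_y(u)+\epsilon=g_x(u)-g_y(u)$, of absolute value at most $1$. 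Hence any two of $f_x,f_y,f_z$ are neighboring valuations, the neighboring parameter being the difference of their distances to $\HJ$.

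The heart of the argument is to prove $f_z=f_x\ast f_y$. Using $\epsilon=m_x-m_y$, I would unwind the definition of $\ast$: the auxiliary function satisfies $f_3'(u)=g_x(u)-m_x-1$ when $g_x(u)=g_y(u)$, and $f_3'(u)=\max(g_x(u),g_y(u))-m_x$ otherwise. Running through the three distance patterns ($x$, $y$, or $z$ nearest to $u$) one finds in each case that $g_z(u)-f_3'(u)=m_x$, a constant independent of $u$. Consequently $f_z-(f_x\ast f_y)$ is constant, and since both are valuations (by part $(1)$ and by the construction of $\ast$), each has minimum value $0$, so that constant is $0$ and $f_z=f_x\ast f_y$. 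Finally I would clear the degeneracies: if two of the valuations coincide, say $f_x=f_y$, then $f_z=f_x\ast f_y=f_x\ast f_x=f_x$ by the idempotence noted after the definition of $\ast$, so all three are equal; the symmetric relations $f_x=f_y\ast f_z$ and $f_y=f_x\ast f_z$ dispose of the other two coincidences. Thus either $f_x=f_y=f_z$ or the three are pairwise distinct, and in the latter case the neighboring property together with $f_z=f_x\ast f_y$ shows $\{f_x,f_y,f_z\}$ is a line of $\mathcal{V}$.

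The main obstacle I anticipate is the bookkeeping in the $\ast$-computation, since the operation is defined piecewise and only after a normalization step: one must pin down the parameter $\epsilon$ as the difference of the distances to $\HJ$ and then check that the normalizing constant is absorbed automatically. The conceptual point that keeps this clean is that the unnormalized functions $g_x,g_y,g_z$ already encode everything, and the quantity $g_z-f_3'$ collapses to the single constant $m_x$ uniformly across all three distance patterns.
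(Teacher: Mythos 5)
Your proof is correct and takes essentially the same approach as the paper: the paper's own proof is a one-line deferral to Property (NP2) and an external reference, and your argument is precisely that deduction from (NP2) carried out in full. The key identity $g_z(u)-f_3'(u)=m_x$ across all three (NP2) distance patterns, together with the normalization-by-minimum step and the idempotence $f\ast f=f$ for the degenerate case, all check out against the definitions of neighboring valuations and of $\ast$ given in the appendix.
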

\begin{proof}
This is easily deduced from Property (NP2) in the definition of valuation, see e.g. \cite[Lemma 2.2]{ab-bdb:1}.
\end{proof}

\bigskip \noindent For the purpose of studying near polygons containing $\HJ$ as an isometrically embedded subgeometry, the authors determined all valuations of $\HJ$ with the aid of GAP, see \cite{ab-bdb:2}. It turns out that there are 7119 valuations which fall into five isomorphism classes, see Table \ref{tab1}. In this table, $M_f$ denotes the maximal value attained by a valuation $f$ and $\mathcal{O}_f$ denotes the set of points with value 0. The number of points with a given value $i \in \{ 0,1,2,3,4 \}$ can be found as the $(i+1)$-th entry in ``value distribution''.

\begin{table}
\begin{center}
\begin{tabular}{|c||c|c|c|c|c|}
\hline
Type & \# & $M_f$ & $|\mathcal{O}_f|$ & value distribution \\
\hline
\hline
$A$ & 315 & 4 & 1 & [1,10,80,160,64] \\
\hline 
$B$ & 630 & 3 & 1 & [1,10,112,192,0] \\
\hline 
$C$ & 3150 & 3 & 1 & [1,26,128,160,0] \\
\hline 
$D$ & 1008 & 2 & 5 & [5,110,200,0,0] \\
\hline 
$E$ & 2016 & 2 & 25 & [25,130,160,0,0] \\
\hline 
\end{tabular}
\end{center}
\caption{The valuations of Hall-Janko near octagon}
\label{tab1}
\end{table}

Subsequently, we have determined the possible line types for the lines of $\mathcal{V}$, together with information saying how many lines of each type are incident with a given point of Type $T \in \{ A,B,C,D,E \}$. This information can be found in Table \ref{tab2}. 

\begin{table}
\begin{center}
\begin{tabular}{|c||c|c|c|c|c|}
\hline
Type & $A$ & $B$ & $C$ & $D$ & $E$ \\
\hline
\hline
$AAA$ & 5 & -- & -- & -- & -- \\
\hline
$ABB$ & 1 & 1 & -- & -- & -- \\
\hline
$ACC$ & 5 & -- & 1 & -- & -- \\
\hline
$BBB$ & -- & 5 & -- & -- & -- \\
\hline
$BBC$ & -- & 10 & 1 & -- & -- \\
\hline
$CCC$ & -- & -- & 9 & -- & -- \\
\hline
$CDD$ & -- & -- & 4 & 25 & -- \\
\hline
$DDD$ & -- & -- & -- & 6 & -- \\
\hline
$DEE$ & -- & -- & -- & 1 & 1 \\
\hline
$EEE$ & -- & -- & -- & -- & 6 \\
\hline 
\end{tabular}
\end{center}
\caption{The lines of the valuation geometry}
\label{tab2}
\end{table}

Now, take the subgeometry $\mathcal{V}'$ of order $(2,10)$ of $\mathcal{V}$ whose points are the valuations of Type $A$, $B$, $C$, and whose lines are the lines of Type $AAA$, $ABB$, $ACC$, $BBC$, $CCC$. Computer computations showed that this is a near octagon (containing $\HJ$ as a full suboctagon). Computer computations also revealed that $\mathrm{G}_2(4){:}2$ was the most likely candidate for the full automorphism group (see Section \ref{sec1}). An attempt to construct the near octagon directly from the group $\mathrm{G}_2(4){:}2$ was successful and resulted in the description given in Theorem \ref{main:1}. We end this appendix by showing that the geometries $\mathcal{G}$ and $\mathcal{V}'$ are indeed isomorphic.

\begin{prop} \label{prop7.2}
The near octagon $\mathcal{G}$ is isomorphic to $\mathcal{V}'$.
\end{prop}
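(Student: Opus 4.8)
The plan is to fix a single Hall-Janko suboctagon $\mH$ of $\mG$ (say $\mH=\mS_H$ for a maximal subgroup $H\cong \mathrm{J}_2{:}2$) and to use it to build an explicit isomorphism $\Phi\colon \mG\to \mV'$. Since $\mH$ is isometrically embedded in $\mG$ by Lemma \ref{lem4.23} and is isomorphic to $\HJ$, Lemma \ref{lem6.1} applies: for every point $x$ of $\mG$ the map $f_x\colon y\mapsto \dist(x,y)-\dist(x,\mH)$ on the points of $\mH$ is a valuation of $\HJ$, and for every line $\{x,y,z\}$ of $\mG$ either $f_x=f_y=f_z$ or $\{f_x,f_y,f_z\}$ is a line of $\mV$. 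I would set $\Phi(x):=f_x$ and then prove separately that $\Phi$ is a bijection from the points of $\mG$ onto the points of $\mV'$ and maps the lines of $\mG$ bijectively onto the lines of $\mV'$.

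For the point map, note that the zero-set $\mO_{f_x}$ consists of the points of $\mH$ nearest to $x$; by Lemma \ref{lem4.27} (and trivially if $x\in\mH$) this is a single point, so $|\mO_{f_x}|=1$ and hence, by Table \ref{tab1}, $f_x$ has type $A$, $B$ or $C$ (types $D$, $E$ have $|\mO_f|=5,25$). For injectivity I would argue by cases: if $x\in\mH$ then $f_x$ is the distance function on $\mH$, so $M_{f_x}=4$ and $f_x$ has type $A$; if $x\notin\mH$ then $\dist(x,\mH)=1$ forces $M_{f_x}\le 3$, so $f_x$ has type $B$ or $C$. Thus $\Phi(x)=\Phi(y)$ forces $x,y$ to lie simultaneously in or out of $\mH$; in the first case the common zero-set gives $x=y$, and in the second case $f_x=f_y$ yields $\pi_\mH(x)=\pi_\mH(y)$ (the unique zero) together with $\mH\cap\G_4(x)=\mH\cap\G_4(y)$ (the value-$3$ points), whence $x=y$ by Lemma \ref{lem4.28}. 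Since $|\mP|=4095=315+630+3150$ equals the number of valuations of types $A$, $B$, $C$ (Table \ref{tab1}), the injection $\Phi$ is a bijection onto the point set of $\mV'$.

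For the line map, injectivity of $\Phi$ on points makes $\{f_x,f_y,f_z\}$ a genuine line of $\mV$ (with all points among types $A$, $B$, $C$) for every line $\{x,y,z\}$ of $\mG$, and $\Phi$ is injective on lines because a line is recovered as $\Phi^{-1}$ of its image triple. The main obstacle I anticipate is showing that no line of $\mG$ maps to a line of type $BBB$, since by Table \ref{tab2} these are exactly the $\mV$-lines with points in $\{A,B,C\}$ that were deliberately removed when forming $\mV'$. I would circumvent the type-$B$ points entirely by a surjectivity-plus-counting argument. Every line of $\mV'$ (type $AAA$, $ABB$, $ACC$, $BBC$ or $CCC$) contains a point of type $A$ or $C$; fix such a line $m$ and a point $v\in m$ not of type $B$, and put $x:=\Phi^{-1}(v)$. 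The $11$ lines of $\mG$ through $x$ map to $11$ distinct $\mV$-lines through $v$, each with all points in $\{A,B,C\}$; since $v$ is not of type $B$, none of these is of type $BBB$, so each is a line of $\mV'$. As $\mV'$ has order $(2,10)$ there are exactly $11$ lines of $\mV'$ through $v$, so these images exhaust them and in particular $m$ lies in the image of $\Phi$.

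Consequently $\Phi$ maps onto every line of $\mV'$. Because both $\mG$ and $\mV'$ have order $(2,10)$ on $4095$ points, each has $4095\cdot 11/3=15015$ lines, and $\Phi$ is line-injective; hence its image has size $15015$ and contains the $15015$ lines of $\mV'$, forcing the image to be exactly the line set of $\mV'$ (so indeed no line of $\mG$ is sent to type $BBB$). Being a bijection on points and on lines that respects incidence between two partial linear spaces, $\Phi$ is an isomorphism $\mG\to\mV'$. The only genuinely subtle step is the $BBB$-avoidance in the third paragraph; everything else is bookkeeping against Tables \ref{tab1}–\ref{tab2} and the already-established structural lemmas.
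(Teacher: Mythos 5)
Your proposal is correct and follows essentially the same route as the paper's proof: both define the map via the induced valuations $f_x$, use Lemmas \ref{lem4.23}, \ref{lem4.27}, \ref{lem4.28} and \ref{lem6.1} together with Tables \ref{tab1} and \ref{tab2} to get a point bijection onto the Type $A$, $B$, $C$ valuations, and then handle the lines by a counting argument anchored at points of Type $A$ and $C$ (which is exactly how the paper, reading off the columns ``$A$'' and ``$C$'' of Table \ref{tab2}, rules out the $BBB$ lines before matching $15015$ against $15015$). Your write-up is merely a more explicit version of the same argument.
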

\begin{proof}
Regard $\HJ$ as a full subgeometry of $\mathcal{G}$. Then $\HJ$ is isometrically embedded into $\mathcal{G}$ by Lemma \ref{lem4.23}. By Lemma \ref{lem6.1}, every point $x$ of $\mathcal{G}$ will induce a valuation $f_x$ of $\HJ$. This valuation is of Type A if and only if $x$ belongs to $\HJ$. By Lemma \ref{lem4.27}, each induced valuation has a unique point with value 0. So, all induced valuations have Type A, B or C. By Lemma \ref{lem4.28}, all induced valuations are distinct, implying that the 4095 induced valuations are precisely the 4095 valuations of $\HJ$ that have Type A, B or C. Now, every point of $\mathcal{G}$ is incident with precisely 11 lines. By looking at the columns ``A'' and ``C'' of Table \ref{tab2}, we see that all lines of $\mathcal{V}$ of Type AAA, ABB, ACC, BBC and CCC should be induced (in the sense of Lemma \ref{lem6.1}(2)). The number of such lines of $\mathcal{V}$ is equal to $\frac{315 \cdot 5}{3} + 315 \cdot 1 + 315 \cdot 5 + 3150 \cdot 1 + \frac{3150 \cdot 9}{3} = 15015$. Since $\mathcal{G}$ has $\frac{4095 \cdot 11}{3} = 15015$ lines, we see that the lines of $\mathcal{V}$ that are induced are precisely the lines of Type AAA, ABB, ACC, BBC and CCC. We can now conclude that $\mathcal{G}$ and $\mathcal{V}'$ are isomorphic.
\end{proof}

\subsection*{Acknowledgment}

The authors wish to thank Andries Brouwer for discussions on the topics of the paper and his helpful comments.
They also thank the anonymous referees for their detailed reports and constructive remarks.

\end{document}